\numberwithin{equation}{section}
\def\wh{\widehat}
\def\C{{\mathbb C}}
\def\R{{\mathbb R}}
\def\RR{{\mathcal R}}
\def\EE{{\mathcal E}}
\def\N{{\mathbb N}}
\def\Z{{\mathbb Z}}
\def\FFF{\mathfrak F}
\def\P{\mathcal P}
\def\H{\mathcal H}
\def\L{\mathfrak L}
\def\C{{\mathbb C}}
\def\L{\mathcal L}
\def\R{{\mathbb R}}
\def\N{{\mathbb N}}
\def\A{\mathcal A}
\def\B{\mathbf B}
\def\C{\mathbb C}
\def\D{{\mathcal D}}
\def\FF{{\mathcal F}}
\def\HH{\mathbf  H}
\def\L{\mathcal L}
\def\N{\mathbf N}
\def\O{\mathcal O}
\def\P{\mathcal P}
\def\PP{\mathbf P}
\def\R{\mathbb R}
\def\W{\mathcal W}
\def\Z{\mathbb Z}
\def\ii{{\bf i}}
\def\jj{{\bf j}}
\def\kk{{\bf k}}
\def\n{\mathfrak n}
\def\b{\mathfrak b}
\def\x{\mathbf x}
\def\y{\mathbf y}
\def\z{\mathbf z}
\DeclareMathSymbol{\DDelta}{\mathalpha}{letters}{"01}
\def\1{{\bf 1}}
\def\one{\mathbf 1}
\def\bx{\begin{example}}
\def\ex{\end{example}}
\newtheorem{theorem}{Theorem}[section]
\newtheorem{examp}{Example}[section]
\newtheorem{coroll}{Corollary}[section]
\newtheorem{lemma}{Lemma}[section]
\newtheorem{remark}{Remark}[section]
\newtheorem{proposition}{Proposition}[section] 
\newtheorem{definition}{Definition}[section]
\def\br{\begin{remark}}
\def\er{\end{remark}}
\def\bt{\begin{theorem}}
\def\et{\end{theorem}}
\def\bc{\begin{coroll}}
\def\ec{\end{coroll}}
\def\bl{\begin{lemma}}
\def\el{\end{lemma}}
\def\bd{\begin{definition}}
\def\ed{\end{definition}}
\def\bp{\begin{proposition}}
\def\ep{\end{proposition}}
\def\be{\begin{equation}}
\def\ee{\end{equation}}
\def\bea{\begin{eqnarray}}
\def\eea{\end{eqnarray}}
\def\beas{\begin{eqnarray*}}
\def\eeas{\end{eqnarray*}}
\def\one{\mathbf 1}
\def\Id{\operatorname{Id}}
\def\Id{{\operatorname {Id}}}
\def\Jac{{\operatorname {Jac}}}
\def\Poi{{\{\cdot,\cdot\}}}
\def\Span{{\operatorname{span}}}
\def\Trace{\operatorname{tr}}
\def\codim{\operatorname{codim}}
\def\corank{\operatorname{corank}}
\def\ddt{\frac{d}{dt}}
\def\deg{{\operatorname{deg}}}
\def\diag{\operatorname{diag}}
\def\rank{\operatorname{rank}}
\def\sign{\mbox{sign}}
\def\tr{\operatorname{tr}}
\date{}
\begin{document}
                                                                  
\title[Coxeter--Toda flows from a cluster algebra perspective]
{Generalized B\"acklund--Darboux transformations for Coxeter--Toda flows from a cluster algebra perspective}

\author{Michael Gekhtman}
\address{Department of Mathematics, University of Notre Dame, Notre Dame,
IN 46556}
\email{mgekhtma@nd.edu}

\author{Michael Shapiro}
\address{Department of Mathematics, Michigan State University, East Lansing,
MI 48823}
\email{mshapiro@math.msu.edu}

\author{Alek Vainshtein}
\address{Department of Mathematics AND Department of Computer Science, University of Haifa, Haifa,
Mount Carmel 31905, Israel}
\email{alek@cs.haifa.ac.il}

\date\today
\subjclass[2000]{ 37K10, 53D17, 13A99}

\begin{abstract} 
We present the third in the series of papers describing Poisson 
properties of planar directed networks in the disk or in the annulus. 
In this paper we concentrate on special networks $N_{u,v}$ in the disk 
that correspond to the choice of a pair $(u,v)$ of  Coxeter elements 
in the symmetric group $S_n$ and the corresponding networks $N_{u,v}^\circ$ 
in the annulus. Boundary measurements for $N_{u,v}$ 
represent elements of the Coxeter double Bruhat cell 
$G^{u,v}\subset GL_n$. The Cartan subgroup $\HH$ acts on ${G}^{u,v}$ by conjugation.  
The standard Poisson structure on the space of weights of  $N_{u,v}$
induces a Poisson structure on $G^{u,v}$, and hence on the quotient $G^{u,v}/\HH$, 
which makes the latter into the phase space for an appropriate Coxeter--Toda lattice. 
The boundary measurement for $N_{u,v}^\circ$ is a rational function that 
coincides up to a nonzero factor with the Weyl function for the 
boundary measurement for $N_{u,v}$. The corresponding Poisson bracket on the space of 
weights of $N_{u,v}^\circ$  induces a Poisson bracket on the certain space
$\RR_n$ of rational functions, which appeared previously in the context of Toda flows.

Following the ideas developed in our previous papers,  we introduce a cluster algebra $\A$ 
on $\RR_n$ compatible with the obtained Poisson bracket. 
Generalized B\"acklund--Darboux transformations map solutions of one Coxeter--Toda
lattice to solutions of another preserving the corresponding Weyl function.
Using network representation, we construct generalized B\"acklund--Darboux transformations 
as appropriate sequences of cluster transformations in $\A$.
\end{abstract}

\maketitle

\section{Introduction}
This is the third in the series of papers in which we investigate Poisson geometry of directed networks. 
In \cite{GSV3, GSV4}, we studied Poisson structures associated with weighted directed networks
in a disk and in an annulus. The study was motivated in part by Poisson properties of cluster algebras.
In fact, it was shown in \cite{GSV3} that 
if a universal  Poisson bracket on the space of edge
weights of a directed network in a disk satisfy an analogue of the Poisson--Lie property with respect to  concatenation, then the
Poisson structure induced by this bracket on the corresponding Grassmannian is compatible with the cluster  
algebra structure in the homogeneous coordinate ring of the Grassmannian.
In this paper we deal with  an example that ties together objects and concepts from
the theory of cluster algebras and directed networks with the theory of integrable systems.

Integrable systems in question are the {\em Toda flows\/} on $GL_n$. These are commuting Hamiltonian
flows generated by conjugation-invariant functions on $GL_n$ with respect to  the standard Pois\-son--Lie structure.
Toda flows (also known as {\em characteristic Hamiltonian systems\/} \cite{R}) are defined for an arbitrary standard 
semi-simple Poisson--Lie group, but we will concentrate on the $GL_n$ case, 
where as a maximal algebraically independent family of conjugation-invariant functions  one can choose 
$F_k : GL_n \ni X \mapsto \frac{1}{k}
\tr X^ k$, $k=1,\ldots, n-1$. The equation of motion generated by $F_k$ has a 
{\em Lax form}:
\be
\label{Lax_intro}
\ddt X = \left [ X,\  - \frac{1}{2} \left ( \pi_+(X^k) - \pi_-(X^k)\right ) \right ],
\ee
where $\pi_+(A)$ and $\pi_-(A)$ denote strictly upper and lower parts of a matrix $A$.

Any double Bruhat cell  $G^{u,v}$, $u,v \in S_n$, is a regular Poisson submanifold  in $GL_n$ invariant under the 
right and left multiplication by elements of the maximal torus (the subgroup of diagonal matrices) $\HH \subset GL_n$. 
In particular,  $G^{u,v}$ is invariant under the conjugation by elements of $\HH$. The standard Poisson--Lie structure 
is also invariant under the conjugation action of $\HH$ on $GL_n$. This means that Toda
flows defined by (\ref{Lax_intro}) induce commuting Hamiltonian flows on $G^{u,v} / \HH $ where  $\HH$ acts on 
$G^{u,v}$ by conjugation.
In the case when $v=u^{-1}=(n\ 1\ 2 \ldots  n-1)$,
$G^{u,v}$ consists of tridiagonal matrices with nonzero off-diagonal entries,  $G^{u,v} / \HH $  can be conveniently described
as the set $\Jac$ of {\em Jacobi matrices} of the form
\be
\label{Jac}
L =  \left( \begin{array}{ccccc}
b_{1} & 1 & 0 & \cdots & 0 \\
a_{1}&b_{2}& 1 &\cdots&0\\
&\ddots&\ddots&\ddots&\\
&&&b_{n-1}& 1 \\
0&&&a_{n-1}&b_{n}
\end{array}\right), \quad  a_1 \cdots a_{n-1} \ne 0, \ \det L \ne 0.
\ee
Lax equations \eqref{Lax_intro} then become the equations of the {\em finite nonperiodic Toda hierarchy\/}:
$$
\ddt L=[L,\pi_-(L^k)],
$$
the first of which, corresponding to $k=1$, is the celebrated {\it Toda lattice} 
\begin{eqnarray*}
\ddt a_j&=&a_j(b_{j+1}-b_j), \quad j=1,\dots , n-1,\\
\nonumber
\ddt b_j&=&(a_j-a_{j-1}), \quad j=1,\dots ,n,
\end{eqnarray*}
 with the boundary conditions $a_0=a_n=0$. Recall that $\det L$ is a Casimir function for the
 standard Poisson--Lie bracket.
 The level sets of the function $\det L$  foliate $\Jac$ into $2(n-1)$-dimensional symplectic
 manifolds, and the Toda hierarchy defines a completely integrable system on every symplectic leaf. 
Note that although Toda flows on an arbitrary double Bruhat cell $G^{u,v}$ can be exactly solved via the so-called
 {\em factorization method\/} (see, e.g. \cite{r-sts}), in most cases the dimension of symplectic leaves in 
 $G^{u,v} / \HH $ exceeds $2(n-1)$, which means that conjugation-invariant functions do not
 form a Poisson commuting family rich enough to ensure Liouville complete integrability.

 An important role in the study of Toda flows is played by the
{\it Weyl function}
\begin{equation}
m(\lambda)=m(\lambda;X)=((\lambda\one-X)^{-1} e_1,e_1)=\frac{q(\lambda)}{p(\lambda)},
\label{weyl}
\end{equation}
where $p(\lambda)$ is the characteristic polynomial of $X$ and $q(\lambda)$ is the characteristic polynomial 
of the $(n-1)\times(n-1)$ submatrix of $X$ formed by deleting the first row and column 
(see, e.g., \cite{dlnt,moser,bf}). Differential equations that describe the evolution of $m(\lambda;X)$ induced by Toda flows do not
depend on the initial value $X(0)$ and are easy to solve: though nonlinear,  they 
are also induced by {\em linear differential equations with constant coefficients\/} on the space
\be
\label{Rat}
\left \{ M(\lambda) =\frac{Q(\lambda)}{P(\lambda)} 
\ : \deg P = n,\  \deg Q = n-1,\  \text{$P, Q$ are coprime,  $P(0) \ne 0$} 
\right \}   
\ee
 by the map $ M(\lambda) \mapsto m(\lambda) = -\frac{1}{H_0} M(-\lambda)$, where 
 $H_0=\lim_{\lambda\to \infty}\lambda M(\lambda)\ne 0$. 
 
 It is easy to see that $m(\lambda;X)$ is invariant under the action of $\HH$ on $G^{u,v}$ by conjugation. Thus we have
 a map from  $G^{u,v} / \HH $ into the space 
$$
\W_n=\left \{ m(\lambda) =\frac{q(\lambda)}{p(\lambda)}
\ : \deg p = n,\  \deg q = n-1,\  \text{$p, q$ are monic and coprime,  $p(0) \ne 0$}
\right \}.
$$
 In the tridiagonal  case, this map, sometimes called the {\em Moser map}, is invertible: it is  a classical 
 result in the theory of moment problems that matrix entries of an element in $\Jac$ can be restored from 
its Weyl function $m(\lambda;X)$ via determinantal formulas for matrix entries
of $X$ in terms of Hankel determinants built from the coefficients of the Laurent expansion of $m(\lambda;X)$. 
These formulas  go back to the work of Stieltjes on continuous fractions \cite{st} (see, e.g.  \cite{akh} for details). 
 
In this paper, we study  double Bruhat cells $G^{u,v}$ that share  common features with the tridiagonal case: 

(i)  the Toda hierarchy defines a completely integrable system on level sets of the determinant in $G^{u,v} / \HH$, and 

(ii) the Moser map $m_{u,v}:G^{u,v} / \HH\to\W_n$ defined in the same way as in the tridiagonal case is invertible.  

\noindent We will see that double Bruhat cells $G^{u,v}$ associated with any pair 
  of Coxeter elements $u,v \in S_n$ enjoy these properties.
(Recall that a Coxeter element in $S_n$ is a product 
of $n-1$ distinct elementary transpositions.) Double Bruhat cells of this kind has previously appeared 
(for an arbitrary simple Lie group) in \cite{Reshetikhin&Co} in the context of integrable systems and 
in \cite{CAIII, YZ} in connection with cluster algebras of finite type. We will call any such 
double Bruhat cell  a {\em Coxeter double Bruhat cell}. 
Integrable equation induced on $G^{u,v} / \HH $ by Toda flows will be called {\em Coxeter--Toda lattices}. This term was first
used in \cite{Reshetikhin&Co} in the case $u=v$ for an arbitrary simple Lie group, which generalizes the 
{\em relativistic Toda lattice\/} that corresponds to the choice 
$u=v=s_{n-1} \cdots s_1$ in $GL_n$. In \cite{FayGekh1, FayGekh2}, the corresponding integrable systems for 
$v=s_{n-1} \cdots s_1$ and an arbitrary Coxeter element $u$  were called {\em elementary Toda lattices}. In the latter case, 
$G^{u,v} / \HH$ can be described as a subset of Hessenberg matrices subject to certain rank conditions on submatrices. 
The  tridiagonal case corresponds to the choice $v=s_{n-1} \cdots s_1$, $u=s_1 \cdots s_{n-1}$. 

Since Coxeter--Toda flows associated with different choices of $(u,v)$  lead to the same evolution of the Weyl 
function, and the corresponding Moser maps are invertible, one can construct transformations between  different 
$G^{u,v} / \HH$ that preserve the corresponding Coxeter--Toda flows and thus serve as {\em generalized B\"acklund--Darboux 
transformations\/}  between them. 

Our goal is to describe these transformations from the cluster algebra point 
of view. To this end, we construct a cluster algebra of rank $2n-2$ associated with an extension of the space (\ref{Rat})
\[
\mathcal{R}_n =\left\{\frac{Q(\lambda)}{P(\lambda)}\ : \deg P = n,\  \deg Q < n,\ \text{$P, Q$ are coprime, $P(0)\ne 0$} \right\}.   
\]
(Note that $\W_n$ is embedded into $\RR_n$ as a codimension~$1$ subspace.)
Distinguished clusters $\x_{u,v}$ in this algebra correspond to Coxeter double Bruhat cells, and are
formed by certain collections of Hankel determinants built out of coefficients of the Laurent expansion 
of an element in $\RR_n$. Sequences of cluster transformations connecting  these distinguished
clusters are then used as the main ingredient in the construction of
generalized B\"acklund--Darboux transformations.

The insight necessary to implement this construction is drawn from two sources: 

(i) the procedure for the inversion of the Moser map, that can be viewed as a generalization of the inverse moment problem, and

(ii) interpretation of functions in $\mathcal{R}_n$ as boundary measurement functions associated with a particular 
kind of networks in an annulus.

Before discussing the organization of the paper, we would like to make two remarks. First, birational
transformations between $G^{u,s_{n-1} \cdots s_1}/\HH $ and $G^{u',s_{n-1} \cdots s_1}/\HH $ for two different 
Coxeter elements $u, u'$ that serve as  generalized B\"acklund--Darboux transformation between the corresponding elementary 
Toda lattices were first studied in \cite{FayGekh1}. Second, a cluster algebra closely related to the one we 
considered here recently appeared in \cite{kedem} and was a subject of a detailed combinatorial study in 
paper \cite{dk}, where cluster mutations along the edges of a certain subgraph of its exchange graph were shown 
to describe an evolution of an {\it  $A_n$ type Q-system}  -- a discrete evolution that
arises in the analysis of the XXX-model, which is an example of a {\em quantum\/} integrable model.
In \cite{dk}, solutions of the Q-system are represented as Hankel determinants built from coefficients
of a certain generating function, that turns out to be rational and can be represented as a matrix element
of a resolvent of an appropriate linear operator. 

The paper is organized as follows. 

In Section 2 we go over the necessary background information on double Bruhat cells, Toda flows, cluster algebras, networks and 
associated Poisson structures. We then proceed, in Section 3, to describe  a parametrization of a Coxeter double 
Bruhat cell. This is a particular case of the Berenstein-Fomin-Zelevinsky parametrization
\cite{BFZ, FZ1}: for a generic element $X$ in  $G^{u,v}$, we consider a factorization of $X$ into elementary bidiagonal factors
consistent with the Gauss factorization of $X$, that is $X=X_- X_0 X_+$, where $X_0$ is  the diagonal matrix 
$\diag(d_1, \ldots, d_n)$, 
$X_+$ is the product of $n-1$ elementary upper bidiagonal factors $E_{i}^+(c_i^+)$,  $i=1,\ldots, n-1$,
with the order of factors in the product prescribed by $v$, and 
$X_-$ is the product of $n-1$ elementary lower bidiagonal factors $E_{i}^-(c_i^-)$,  $i=1,\ldots, n-1$, 
with the order of factors in the product prescribed by $u$.  We also give an intrinsic characterization  of a double Bruhat cell.

Elements $G^{u,v}/\HH$ are parametrized by $d_i$ and $c_i=c_i^+ c_i^-$,  $i=1,\ldots, n-1$. In Section 4 we show that 
these parameters can be restored as monomial expressions in terms of an appropriately chosen collection 
of Hankel determinants built from the coefficients of the Laurent expansion of the Weyl 
function $m(\lambda)$. (In \cite{FayGekh3}, a similar inverse
problem was solved for the case   $v=s_{n-1} \cdots s_1$, $u$  arbitrary.)  Both the choice of Hankel determinants 
and exponents entering monomial expressions for $d_i$, $c_i$ are uniquely determined by
the pair $(u, v)$. 

In Section 5, the map $X \mapsto m(\lambda; X)$ is  given a combinatorial interpretation in terms of weighted directed 
planar networks. To an elementary bidiagonal factorization of  $X\in G^{u,v}$ there corresponds a network  $N_{u,v}$ in a 
square (disk) with $n$ sources located on one side of the square and $n$ sinks located at the opposite side, both 
numbered bottom to top (see, e.g.  \cite{FZ1, FZ_Intel, Fallat}).
By gluing opposite sides of the square containing sinks and sources in such a way that each sink is glued to the 
corresponding source and adding two additional edges, one incoming and one outgoing, 
one obtains a weighted directed network in an annulus (outer and inner boundary circles of the annulus are formed by 
the remaining two sides of the square).
Networks in an annulus were studied in \cite{GSV4}. The  network we just described, $N_{u,v}^\circ$, has one sink and one 
source on the outer boundary of an annulus and, according to  \cite{GSV4}, the boundary measurement that corresponds 
to this network is a rational function $M(\lambda)$ 
in an auxiliary parameter $\lambda$. We show that $-M(-\lambda)$ is equal to 
$m(\lambda; X)$ times the product  of weights
of the incoming and outgoing edges in $N_{u,v}^\circ$. 

The determinantal formulae for 
the inverse of the Moser map are homogeneous of degree 
zero with respect to coefficients of the Laurent expansion, therefore the same formulae applied to $M(\lambda)$ also
recover $c_i$, $d_i$. Thus, we can define a map $\rho_{u,v}:(\C^*)^{2n}\to G^{u,v}/\HH$ in such a way that the through map
$$
G^{u,v}/\HH\stackrel{m_{u,v}}{\longrightarrow}\W_n\hookrightarrow \RR_n\stackrel{\x_{u,v}}{\longrightarrow}(\C^*)^{2n}\stackrel{\rho_{u,v}}{\longrightarrow}G^{u,v}/\HH
$$
is the identity map.

In the remainder of Section 5, we use the combinatorial data determined by the pair $(u,v)$ (or, in a more 
transparent way, by the corresponding network $N_{u,v}^\circ$) to construct a cluster algebra $\A=\A_{u,v}$, 
with the (slightly modified) collection $\x_{u,v}$ serving as the initial cluster.
The matrix $B_{u,v}$  that determines cluster transformations for the initial cluster is closely
related to the incidence matrix of the graph dual to $N_{u,v}^\circ$. To construct $\A_{u,v}$, we start with the 
Poisson structure induced on boundary measurement functions by a so-called {\em standard Poisson bracket\/} 
on the space of face weights associated with $N_{u,v}^\circ$ (this bracket is a particular case
of the general construction for networks in the annulus given in \cite{GSV4}). Initial cluster variables, viewed as functions 
on $\mathcal R_n$ form a 
coordinate system in which this Poisson structure takes a particular simple form: the Poisson bracket of 
logarithms of any two functions in the family is constant. This allows us to  follow
the strategy from \cite{GSV1} to construct $\A_{u,v}$ as a cluster algebra compatible with this Poisson bracket. 
We then show that $\A_{u,v}$ does not depend on the choice of Coxeter elements $u, v$, that is, that for any
$(u',v')$, the initial seed of $\A_{u',v'}$ is a seed in the cluster algebra $\A_{u,v}$. 
Therefore, the change of coordinates $T_{u,v}^{u',v'}: \x_{u,v} \mapsto\x_{u',v'}$  is accomplished by a 
sequence of cluster transformations.
Moreover, the ring of regular functions on $\mathcal{R}_n$ coincides with the localization of the complex form of 
$\A$ with respect to  the stable variables. We complete Section 5 with the discussion of the interplay 
between our results and those of \cite{dk}. In particular, we provide an alternative proof for one of the main results of \cite{dk} 
concerning the Laurent positivity of the solutions of Q-systems.

In the final section,   we 
interpret generalized B\"acklund--Darboux transformations between  Coxeter--Toda lattices corresponding to different 
pairs of Coxeter elements in terms of the cluster algebra $\A$ by observing that the map
\be\label{gbdasclust}
{\sigma}_{u,v}^{u',v'}= \rho_{u',v'} \circ  T_{u,v}^{u',v'} \circ \tau_{u,v}\ : 
G^{u,v}/\HH\to G^{u',v'}/\HH,
\ee
with $\tau_{u,v}$ being the right inverse of $\rho_{u,v}$, preserves flows generated by conjugation-invariant functions  
and makes the diagram
$$
\begin{xy}
*!C\xybox{
\xymatrix{
{G^{u,v}/\HH}\ar[rr]^{{\sigma}_{u,v}^{u',v'}}\ar[dr]_{m_{u,v}}&&{G^{u',v'}/\HH}\ar[dl]^{m_{u',v'}}\\
&{\W_n}&
} }\end{xy}
$$
commutative. We obtain explicit formulas for $\sigma_{u,v}^{u',v'}$
and, as a nice application, present formulas that transform solution of the usual Toda lattice into solutions of the relativistic one.
Besides, we explain how one represents generalized B\"acklund--Darboux transformations
as equivalent transformations of the network $N_{u,v}^\circ$. Finally we show that classical Darboux
transformations are also related to cluster algebra transformations via a formula similar 
to~\eqref{gbdasclust}.

\section{Preliminaries} 

In this section we collect  the necessary background information on double Bruhat cells, Toda flows
and directed networks on surfaces. Though notions and results that we will need on the first two subjects 
can be as easily stated for an arbitrary semisimple group, we will limit ourselves to the $GL_n$ case.

\subsection{Double Bruhat cells}
 Let $\b_+$, $\n_+$, $\b_-$, $\n_-$ be, resp.,
algebras of upper triangular, strictly upper triangular, lower triangular and strictly lower
triangular matrices.

The connected subgroups that correspond to 
$\b_+$, $\b_-$, $\n_+$, $\n_-$ will be
denoted by  $\B_+$, $\B_-$, $\N_+$, $\N_-$. We denote by $\HH$ the maximal torus (the subgroup of diagonal matrices) in $GL_n$.

Every $\xi \in gl_n$ can be uniquely decomposed into
\begin{equation*}
\xi = \xi_- + \xi_0 + \xi_+,
\label{decomposition_algebra}
\end{equation*}
where $\xi_+ \in \n_+$, $\xi_- \in \n_-$ and $\xi_0$ is diagonal.
Consequently,  for every $X$ in an open Zariski dense subset 
of $GL_n$ there exists a unique {\em Gauss factorization}
$$
X = X_- X_0 X_+, \quad X_+ \in \N_+, \ X_- \in \N_-,  \ X_0 \in \HH.
$$

Let $s_i$, $i\in [1,n-1]$, denote the elementary transposition $(i, i+1)$ in the symmetric group $S_n$. 
A {\em reduced decomposition\/} of an element $w\in S_n$ is a
representation of $w$ as a product $w=s_{i_1} \cdots s_{i_l}$ of the
smallest possible length. A reduced decomposition is not unique,
but the number $l$ depends only on $w$ and is called the {\em
length\/} of $w$ and denoted by $l(w)$. The sequence of indices $\ii=(i_1,\ldots,i_l)$ that
corresponds to a given reduced decomposition of $w$ is called a
{\em reduced word\/} for $w$. The unique element of $S_n$ of maximal
length (also called {\em the longest element\/} of $S_n$) is
denoted by $w_0$. 

We will also need need a notion of a reduced
word for an ordered pair $(u,v)$ of elements in $S_n$. It is defined
as follows: if $(i_1,\ldots,i_{l(u)})$ is a reduced word for $u$ and
$(i'_1,\ldots,i'_{l(v)})$ is a reduced word for $v$, then any shuffle
of the sequences $(i_1,\ldots,i_{l(u)})$  and $(-i'_1,\ldots,-i'_{l(v)})$ is
called a reduced word for $(u,v)$.

Let us fix an embedding of $S_n$ into $GL_n$ and denote the
representative of $w\in S_n$ in $GL_n$ by the same letter $w$. The
{\em Bruhat decompositions\/} of $GL_n$ with respect to $\B_+$ and
$\B_-$ are defined, resp., by
\[
{\displaystyle GL_n = \cup_{u\in S_n}\B_+ u \B_+, \qquad GL_n =
\cup_{v\in S_n} \B_-  v \B_-. }
\]
For any $u,v \in S_n$, the {\em double Bruhat cell\/} is
defined as
\[
G^{u,v} = \B_+ u \B_+ \cap \B_- v \B_-.
\]

According to \cite{FZ1},
the variety $G^{u, v}$ is biregularly isomorphic to a Zariski open
subset of $\C^{l(u)+l(v)+n}$. A corresponding birational map
from  $\C^{l(u)+l(v)+n}$ to  $G^{u, v}$
can be constructed quite
explicitly, though not in a unique way. Namely, fix a  reduced word  $\ii$
for the pair $(u,v)$ and consider, in addition, a sequence
$\kk=(k_1,\ldots,k_n)$ obtained as an arbitrary re-arrangement of numbers  $\mathfrak i$ through $\mathfrak i n$.
 Let $\mathbf j=(j_1,\dots,j_{l(u)+l(v)+n})$ be a shuffle of $\kk$  and $\ii$;
we set
$\theta(j_l)$ to $+$ if $j_l>0$, to $-$  if $j_l< 0$, and to $0$
if $j_l \in \kk$. 

Denote by $e_{ij}$ an elementary $n\times n$ matrix
$\left ( \delta_{i\alpha} \delta_{j\beta}\right )_{\alpha,\beta=1}^n$. 
For $t\in \C$, $i,j\in [1,n-1]$, $k\in [1,n]$, let 
\be
\label{elembi}
E^-_i(t) =\one + t e_{i+1, i},\quad 
E^+_j(t) =\one + t e_{j, j+1},\quad 
E^0_k(t) = \one + (t-1) e_{kk}.
\ee
Then the map
$X_{\jj}\ : \C^{l(u)+l(v)+n}\to G^{u, v}$ can be defined  by
\begin{equation}
X_{\jj}(\mathbf t)= 
\prod_{q=1}^{l(u)+l(v)+n}
E^{\theta(j_q)}_{|j_q|}(t_q) .
\label{factorGuv}
\end{equation}
Parameters $ t_{1},\ldots,t_{l(u)+l(v)+n}$ constituting $\mathbf t$ are called {\it
factorization parameters}. 
Explicit formulae for the inverse of the map (\ref{factorGuv}) in terms of the so-called 
{\it twisted generalized minors\/} were found in \cite{FZ1}. 

\subsection{Toda flows}  Next, we review the basic facts about the Toda flows on $GL_n$. 
 
 Recall that the standard Poisson--Lie structure on $GL_n$ is given by
\begin{equation*}
\{ f_1, f_2\}_{SL_n}(X) = \frac{1}{2} ( R(\nabla f_1(X)\ X), 
\nabla f_2(X)\ X ) -  \frac{1}{2} ( R(X\ \nabla f_1(X)), X\ \nabla f_2(X) ),
\end{equation*}
where $(\  ,\ )$ denotes the {\em  trace-form}, $\nabla$ is the gradient defined with respect to  the trace-form, 
and  $R: gl_n\to gl_n$ is the standard R-matrix given by
$$
R(\xi)=\xi_+ - \xi_- = \left ( \sign (j-i) \xi_{ij}\right )_{i,j=1}^n.
$$

Double Bruhat cells are regular Poisson submanifolds of $GL_n$ equipped with the standard Poisson--Lie structure
(see \cite{R, KZ, Y}). Furthermore, 

(i) any symplectic leaf of $GL_n$ is of the form $S^{u,v} a$, where $S^{u,v}\subset G^{u,v}$ is a certain distinguished
symplectic leaf and $a$ is an element of the Cartan subgroup, and 

(ii) the dimension of symplectic leaves in $G^{u,v}$ 
equals $l(u) + l(v) + \corank ( u v^{-1} - \Id)$, see \cite{R, KZ}.

Conjugation-invariant functions on $GL_n$ form a Poisson-commuting family (see, e.g.,  \cite{r-sts}). Any such function
$F$ generates a Hamiltonian flow described by {\em the Lax equation}
\be
\label{Lax}
dX/dt = \left [ X\ ,\  - \frac{1}{2} R \left ( X \nabla F(X) \right ) \right ].
\ee
The resulting family of equations is called {\em the hierarchy of Toda flows\/}
(in \cite{R}, the term {\em characteristic Hamiltonian systems\/} is used). 
If one chooses $F(X) = F_k(X)= \frac{1}{k} \tr(X^k)$, then equation (\ref{Lax}) becomes (\ref{Lax_intro}).  
Functions $F_1, \ldots, F_{n-1}$  form a maximal family of algebraically independent conjugation-invariant functions
on $GL_n$.

For an element $h\in GL_n$, denote by $C_h$ the action of $h$ on $GL_n$ by conjugation: $C_h (X) = h X h^{-1}$. 
For any smooth function $f$ on $GL_n$ we have 
$$
\nabla (f \circ C_h) = Ad_{h^{-1}} ( \nabla f ). 
$$
Furthermore, if $h$ belongs to $\HH$, then it is easy to see that
$$
R( Ad_{h^{-1}} ( \xi )) = Ad_{h^{-1}} ( R(\xi ))
$$ 
for any $\xi \in gl_n$. Together, these observations imply
that for any $h\in \HH$ and any pair of smooth functions $f_1, f_2$ on $GL_n$,
$$
\{ f_1\circ C_h, f_2\circ C_h\} = \{ f_1, f_2\}\circ C_h.
$$
In other words, the action of $\HH$ on $GL_n$ by conjugation is Poisson with respect to  the standard 
Poisson--Lie structure. Since the action preserves double Bruhat cells, the standard Poisson--Lie structure
induces a Poisson structure on $G^{u,v}/\HH$, and the Toda hierarchy induces the family of commuting 
Hamiltonian flows on  $G^{u,v}/\HH$.

\br{\rm (i) The Lax equation (\ref{Lax}) can be solved explicitly 
via {\em the factorization method\/} \cite{r-sts}, 
which we will not review here. 

(ii) Written in terms of matrix entries, equations (\ref{Lax}) have exactly the same form as equations 
of the Toda hierarchy on $gl_n$, where the relevant Poisson structure is the Lie--Poisson structure associated 
with the {R-matrix Lie bracket\/} 
$[\xi, \eta]_R = \frac{1}{2} \left ([ R(\xi), \eta ] + [\xi, R (\eta) ]\right )$.
In fact, viewed as equations on the algebra of $n\times n$ matrices, the Toda hierarchy becomes a family of 
bi-Hamiltonian flows with compatible linear and quadratic Poisson brackets given by, respectively, Lie--Poisson 
and the extension of the Poisson--Lie brackets. However, we will not need the linear Poisson structure in the current paper.
}
\er

\subsection{Cluster algebras and compatible Poisson brackets}
\label{CA&PB}

First, we recall the basics of cluster algebras of geometric type. The definition that we present
below is not the most general one, see, e.g.,
\cite{FZ2, CAIII} for a detailed exposition.
 
The {\em coefficient group\/} $\PP$ is a free multiplicative abelian
group of a finite rank $m$ with generators $g_1,\dots, g_m$.
An {\em ambient field\/}  is
the field $\FFF$ of rational functions in $n$ independent variables with
coefficients in the field of fractions of the integer group ring
$\Z\PP=\Z[g_1^{\pm1},\dots,g_m^{\pm1}]$ (here we write
$x^{\pm1}$ instead of $x,x^{-1}$). It is convenient to think of $\FFF$ as
of the field of rational functions in $n+m$ independent variables
with rational coefficients.

A {\em seed\/} (of {\em geometric type\/}) in $\FFF$ is a pair
$\Sigma=(\x,B)$,
where $\x=(x_1,\dots,x_{n+m})$, $x_1,\dots,x_n$ is a transcendence basis of $\FFF$ over the field of
fractions of $\Z\PP$, $x_{n+i}=g_i$ for $i\in [1,m]$, 
and $B$ is an $n\times(n+m)$ integer matrix
whose principal part (that is, the $n\times n$ submatrix formed by the
columns $1,\dots,n$) is skew-symmetric.
The $(n+m)$-tuple  $\x$ is called a {\em cluster\/}, its elements
$x_1,\dots,x_n$ are called {\em cluster variables\/}, and $x_{n+1},\dots,x_{n+m}$ are {\em stable
variables\/}. 

Given a seed as above, the 
{\em cluster transformation\/} in direction $k\in [1,n]$
is defined by
$$
\x\mapsto\x_k=(\x\setminus\{x_k\})\cup\{\bar x_k\},
$$
where the new cluster variable $\bar x_k$ is given by the {\em exchange relation}
\begin{equation}\label{exchange}
x_k\bar x_k=\prod_{\substack{1\le i\le n+m\\  b_{ki}>0}}x_i^{b_{ki}}+
       \prod_{\substack{1\le i\le n+m\\  b_{ki}<0}}x_i^{-b_{ki}};
\end{equation}
here, as usual, the product over the empty set is assumed to be
equal to~$1$.

We say that $\bar B$ is obtained from $B$ by a {\em matrix mutation\/} in direction $k$ if
\[
\bar b_{ij}=\begin{cases}
         -b_{ij}, & \text{if $i=k$ or $j=k$;}\\
                 b_{ij}+\displaystyle\frac{|b_{ik}|b_{kj}+b_{ik}|b_{kj}|}2,
                                                  &\text{otherwise.}
        \end{cases}
\]

Given a seed $\Sigma=(\x,B)$, we say that a seed
$\bar\Sigma=(\bar\x,\bar B)$ is {\em adjacent\/} to $\Sigma$ (in direction
$k$) if $\bar\x$ is obtained from $\x$ and $\bar B$ is obtained from $B$
by a cluster transformation and a  matrix mutation, respectively, 
in direction $k$.
 Two seeds are {\em mutation equivalent\/} if they can
be connected by a sequence of pairwise adjacent seeds.
The {\em cluster
algebra\/} (of {\em geometric type\/}) $\A=\A(B)$
associated with $\Sigma$ is the $\Z\PP$-subalgebra of $\FFF$
generated by all cluster variables in all seeds mutation
equivalent to $\Sigma$. The {\it complex form\/} of $\A$ is defined as 
$\A$ tensored by $\C$ and is denoted $\A_\C$.

Let $V$ be a Zariski open subset in $\C^{n+m}$, $\A_\C$ be the complex form of a cluster algebra of geometric type. 
We assume that the variables in some extended cluster are identified with
a set of algebraically independent rational functions on $V$. This allows us to identify cluster variables in any cluster with rational functions on $V$ as well, and thus to consider $\A_\C$ as a subalgebra of the field $\C(V)$ of rational functions on $V$. Finally, we denote by $\A^V_\C$ the localization of $\A_C$ with respect to the stable variables that do not vanish on $V$.

\begin{proposition} \label{lostprop}
Let $V$ and $\A$ as above satisfy the following conditions:

{\rm (i)} each regular function on $V$ belongs to $\A^V_\C$;

{\rm (ii)} there exists a cluster $\x=(x_1,\dots,x_{n+m})$ in $\A_\C$ consisting of algebraically independent functions regular on $V$;

{\rm (iii)} any cluster variable $\bar x_k$, $k\in [1,n]$, obtained by the cluster transformation {\rm \eqref{exchange}} applied to $\x$ is regular on $V$.

Then $\A^V_\C$ is isomorphic to the ring $\O(V)$ of regular functions on $V$.
\end{proposition}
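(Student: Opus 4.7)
The inclusion $\O(V)\subseteq\A^V_\C$ is exactly hypothesis~(i), so the content is in the reverse inclusion $\A^V_\C\subseteq\O(V)$, i.e.\ that every element of $\A^V_\C$ is regular on $V$. My plan is to combine the Laurent phenomenon with conditions~(ii) and~(iii) in order to constrain the possible pole loci of an element of $\A^V_\C$, and then to use the exchange relations together with algebraic independence of $\x$ to eliminate them.

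Let $f\in\A^V_\C$. By the Laurent phenomenon, $f$ is a Laurent polynomial in the variables of $\x$ with coefficients in $\Z\PP$, so since each $x_i$ is regular on $V$ by~(ii), $f$ is automatically regular on $V\setminus\bigcup_i\{x_i=0\}$. It therefore suffices to rule out poles of $f$ along prime divisors $D\subset V$ that lie in some $\{x_i=0\}$. If $i>n$ is a stable index, either $x_i$ is nowhere zero on $V$ (so $\{x_i=0\}\cap V=\emptyset$), or $x_i$ vanishes somewhere on $V$, in which case $x_i$ is not inverted in $\A^V_\C$ by definition, so no negative power of $x_i$ occurs in any $\A^V_\C$-expression of $f$ and no pole arises.

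The substantive case is $D\subset\{x_k=0\}$ for a mutable index $k\in[1,n]$. Here I would apply the Laurent phenomenon again, but now to the adjacent cluster $\x_k=(\x\setminus\{x_k\})\cup\{\bar x_k\}$: by~(iii), $\bar x_k$ is also regular on $V$, so from this second expression $f$ has possible poles only along $\{\bar x_k=0\}\cup\bigcup_{j\neq k}\{x_j=0\}$. Intersecting with $\{x_k=0\}$ and using irreducibility of $D$, either $D\subset\{x_k=0\}\cap\{x_j=0\}$ for some $j\neq k$, in which case I recurse with the index $j$ (and eventually reduce to the stable case already treated), or $D\subset\{x_k=0\}\cap\{\bar x_k=0\}$. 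Via the exchange relation $x_k\bar x_k=M_1+M_2$, where $M_1,M_2$ are distinct monomials in $\{x_j:j\neq k\}$, the latter would force $M_1+M_2$ to vanish along $D$.

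The main obstacle is to convert this last statement into an actual contradiction. The key algebraic input is that $x_k$ and $M_1+M_2$ are coprime as elements of the Laurent polynomial ring $\Z\PP[\x^{\pm1}]$: the monomials $M_1,M_2$ do not involve $x_k$, so $x_k$ does not divide $M_1+M_2$. Under algebraic independence of $\x$ on $V$, which is part of~(ii), the map $V\to\C^{n+m}$ defined by $\x$ is dominant, and hence formal coprimality in $\Z\PP[\x^{\pm1}]$ transfers to coprimality of the principal Weil divisors of zeros of $x_k$ and of $M_1+M_2$ on $V$: they share no common irreducible component. Consequently $D$ cannot be contained in both $\{x_k=0\}$ and $\{M_1+M_2=0\}=\{x_k\bar x_k=0\}$, ruling out the last alternative and completing the argument that $f\in\O(V)$.
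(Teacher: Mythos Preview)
Your outline follows the same divisor-by-divisor strategy as the paper, but it has a genuine gap at the decisive step.

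The problem is the ``coprimality transfer'' at the end. You argue that since $x_k$ does not divide $M_1+M_2$ in the formal Laurent polynomial ring $\Z\PP[\x^{\pm1}]$, dominance of $\x:V\to\C^{n+m}$ forces the Weil divisors of $x_k$ and of $M_1+M_2$ on $V$ to share no component. But dominance only gives you that $\x^*:\C[X_1,\dots,X_{n+m}]\hookrightarrow\O(V)$ is \emph{injective}, and injective ring maps do not preserve coprimality. In fact the very hypotheses you are using show that your claim is false: by~(iii), $\bar x_k\in\O(V)$, so the exchange relation $x_k\bar x_k=M_1+M_2$ exhibits $x_k$ as a \emph{divisor} of $M_1+M_2$ in $\O(V)$. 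Hence $\{x_k=0\}\subset\{M_1+M_2=0\}$ on $V$ always, and your ``contradiction'' is no contradiction at all. (A baby model: on $V=\C^2$ with $x_1=s$, $x_2=st$, the functions $x_1,x_2$ are algebraically independent and coprime in $\C[X_1,X_2]$, yet $\{x_1=0\}\subset\{x_2=0\}$.) The earlier ``recurse with $j$'' step is also unjustified: nothing prevents the recursion from cycling between two mutable indices $k$ and $j$ with $D\subset\{x_k=0\}\cap\{x_j=0\}$.

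What is missing is exactly the lemma the paper proves: each regular cluster variable is \emph{irreducible} in $\O(V)$. Once you know that $x_k$, $x_j$ ($j\ne k$), and $\bar x_k$ are irreducible and pairwise non-associate (the latter by algebraic independence), the zero loci $\{x_k=0\}$, $\{x_j=0\}$, $\{\bar x_k=0\}$ are pairwise distinct prime divisors, so any two of them meet in codimension~$\ge 2$ and cannot contain your prime divisor $D$. That closes both gaps simultaneously and recovers the paper's codimension-two argument. Proving irreducibility, in turn, requires first showing that no regular Laurent monomial in a cluster can involve a negative power of a mutable variable---this is where the Laurent phenomenon in the adjacent cluster is actually used.
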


\proof All we have to prove is that any element in $\A^V_\C$ is a regular function on $V$. 
The proof follows the proof of a similar statement for double Bruhat cells in \cite{Z} and  consists of three steps.

\begin{lemma}\label{zl33}
Let $\z=(z_1,\dots,z_{n+m})$ be an arbitrary cluster in $\A_\C$. If
a Laurent monomial $M=z_1^{d_1}\cdots z_{n+m}^{d_{n+m}}$ is regular on $V$ then $d_i\ge0$ for $i\in [1,n]$. 
\end{lemma}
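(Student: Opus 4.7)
The proof is by contradiction: assume $d_k<0$ for some cluster-variable index $k\in[1,n]$.

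The plan is to mutate $\z$ in direction $k$, rewrite $M$ using the resulting exchange relation, and then derive a contradiction with the regularity of $M$ on $V$. Let $\bar z_k$ be the mutated cluster variable, satisfying
\[
z_k\bar z_k \;=\; P^+ + P^-,
\]
where $P^+$ and $P^-$ are two distinct monomials in $\{z_i : i\ne k\}$ with non-negative exponents, read off from the exchange matrix attached to $\z$. A preliminary step is to check, by induction on the distance of $\z$ from $\x$ in the exchange graph and using the Laurent phenomenon together with hypothesis (iii), that every $z_i$ and the mutated variable $\bar z_k$ are in fact regular on $V$; equivalently, every cluster variable lies in $\O(V)$.

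Substituting $z_k=(P^++P^-)/\bar z_k$ into $M$ and rearranging yields
\[
M\cdot(P^++P^-)^{-d_k} \;=\; \bar z_k^{-d_k}\prod_{i\ne k}z_i^{d_i}.
\]
Since $-d_k>0$, the right-hand side is a Laurent monomial in the mutated cluster with a strictly positive exponent for $\bar z_k$, and in particular it is generically nonzero and regular on $V$. To reach a contradiction it suffices to argue that the factor $(P^++P^-)^{-d_k}$ on the left-hand side cannot be absorbed if $M$ is assumed regular on $V$.

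To this end I would show that the zero locus of $P^++P^-$ on $V$ contains an irreducible component $D$ along which none of the cluster variables $z_i$ ($i\ne k$) vanishes identically. This is where the algebraic independence of $\z$ enters: since $P^+$ and $P^-$ are distinct monomials in the algebraically independent family $\{z_i:i\ne k\}$, their sum is not a unit multiple of any single monomial in these variables, so the hypersurface $\{P^++P^-=0\}\subset V$ has at least one component that is not swallowed by a coordinate divisor $\{z_i=0\}$. Along such a $D$, the right-hand side remains a generically nonzero rational function while $(P^++P^-)^{-d_k}$ has a pole of order $-d_k$. Consequently $M$ itself must have a pole of order $-d_k$ along $D$, contradicting the regularity of $M$ on $V$ and completing the proof.

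The main obstacle will be the coprimality/irreducibility step: making precise the existence of an irreducible component of $\{P^++P^-=0\}$ not contained in any $\{z_i=0\}$ requires a careful use of the algebraic independence of the cluster $\z$, in the spirit of Zelevinsky's argument in \cite{Z}. Once this geometric fact is in place, the rest of the argument is purely formal manipulation with the exchange relation.
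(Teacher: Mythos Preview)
Your argument has two genuine gaps.

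\textbf{The preliminary step is circular.} You claim that ``by induction on the distance from $\x$ \ldots\ and hypothesis (iii)'' one shows every cluster variable is regular on $V$. But hypothesis (iii) concerns only the clusters adjacent to the \emph{fixed} cluster $\x$; it says nothing about mutations from an arbitrary $\z$. In the inductive step you would need $\bar z_k=(P^++P^-)/z_k$ to be regular, and nothing in the hypotheses guarantees that $z_k$ divides $P^++P^-$ in $\O(V)$. Proving regularity of all cluster variables is in fact part of the conclusion of Proposition~\ref{lostprop}, so you cannot assume it here.

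\textbf{The pole-counting argument collapses.} Even granting regularity, your divisor computation fails. Along a component $D$ of $\{P^++P^-=0\}$ on which $z_k$ does not vanish, the exchange relation $z_k\bar z_k=P^++P^-$ forces $\bar z_k$ to vanish, and to exactly the same order as $P^++P^-$. Hence in
\[
M=\frac{\bar z_k^{-d_k}\prod_{i\ne k}z_i^{d_i}}{(P^++P^-)^{-d_k}}
\]
the zero of $\bar z_k^{-d_k}$ cancels the pole of $(P^++P^-)^{d_k}$ along $D$, and no contradiction with regularity of $M$ arises. So the argument as written does not close.

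\textbf{What the paper does instead.} The paper never attempts a geometric argument on $V$; it uses hypothesis (i), which you do not invoke at all. Since $M$ is regular on $V$, condition (i) gives $M\in\A^V_\C$, and then the Laurent phenomenon expresses $M$ as a Laurent polynomial $L$ in the variables of the mutated cluster $\z_k$. Equating this with $M=M_1\bar z_k^{-d_k}/P^{-d_k}$ (where $P=P^++P^-$ and $M_1$ is a Laurent monomial in $\{z_i:i\ne k\}$) yields $P^{-d_k}\cdot L=M_1\bar z_k^{-d_k}$, i.e.\ a power of the genuine binomial $P$ times a Laurent polynomial equals a single Laurent monomial. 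This is impossible because the variables of $\z_k$ are algebraically independent (which follows from the algebraic independence of $\x$). The contradiction is purely algebraic and avoids all the regularity and divisor issues above.
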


\proof Indeed, assume that $d_k<0$ for some $k\in [1,n]$ and consider the cluster $\z_k$. By~\eqref{exchange}, $M$ can be rewritten as 
$M=M_1\bar z_k^{-d_k}/P^{-d_k}$, where $M_1$ is a Laurent monomial in common variables of $\z$ and $\z_k$, and $P$ is the binomial 
(in the same variables) that appears in the right hand side of~\eqref{exchange}. By condition (i) and the Laurent phenomenon
(Theorem~3.1 in \cite{FZ2}), $M$ can be written as a Laurent polynomial
in the variables of $\z_k$. Equating two expressions for $M$, we see that $P^{-d_k}$ times a polynomial in variables of $\z_k$  equals a Laurent monomial in the same variables. This contradicts the algebraic independence of variables in $\z_k$, which follows from the algebraic independence of variables in $\x$.
\endproof

\begin{lemma}\label{zl34}
Let $z$ be a cluster variable in an arbitrary cluster $\z$, and assume that $z$ is a regular function on $V$. Then $z$ is  
irreducible in the ring of regular functions on $V$.
\end{lemma}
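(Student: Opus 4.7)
My plan is to derive a contradiction from a hypothetical factorization $z = ab$ in $\O(V)$ in which neither $a$ nor $b$ is a unit, by first forcing $a$ and $b$ to be Laurent monomials in $\z$ and then using Lemma~\ref{zl33} to show that one of them must live in the stable variables alone.

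First, by hypothesis (i) of Proposition~\ref{lostprop} both $a$ and $b$ lie in $\A^V_\C$, so the Laurent phenomenon (Theorem~3.1 of \cite{FZ2}) presents them as Laurent polynomials in the cluster $\z=(z_1,\dots,z_{n+m})$, which I may arrange so that $z=z_1$. In the Laurent polynomial ring $\C[\z^{\pm 1}]$, every nonzero element admits a unique decomposition $h = M_h\tilde h$, where $M_h$ is a Laurent monomial and $\tilde h\in\C[\z]$ is a polynomial divisible by no $z_i$. Writing $a = M_a\tilde a$ and $b = M_b\tilde b$ in this form, the identity $ab=z_1$ becomes $(M_aM_b)(\tilde a\tilde b)=z_1$, with $\tilde a\tilde b\in\C[\z]$ still divisible by none of the $z_i$ (by the UFD property of $\C[\z]$). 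Since $z_1$ is itself a monomial, $\tilde a\tilde b$ must be a nonzero scalar, and then degree considerations force each of $\tilde a,\tilde b$ to be a scalar. Hence $a$ and $b$ are Laurent monomials in~$\z$.

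Writing $a=c_a\prod z_i^{\alpha_i}$ and $b=c_b\prod z_i^{\beta_i}$, Lemma~\ref{zl33} applied to each (both regular on $V$) gives $\alpha_i,\beta_i\ge 0$ for $i\in[1,n]$. The identity $ab=z_1$ forces $c_ac_b=1$ and $\alpha_i+\beta_i=\delta_{i,1}$ for all $i$, and combined with the non-negativity constraint this yields $\alpha_i=\beta_i=0$ for $i\in[2,n]$ and, without loss of generality, $\alpha_1=1$, $\beta_1=0$. Therefore $b = c_b\prod_{i=n+1}^{n+m} z_i^{\beta_i}$ is a Laurent monomial in the stable variables, and under the standing assumption that stable variables are nowhere vanishing on $V$ both $b$ and $b^{-1}$ are regular on $V$, so $b$ is a unit in $\O(V)$. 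The only delicate step is the passage to monomials: I need the interplay between the unique $M_h\tilde h$ decomposition in $\C[\z^{\pm 1}]$ and the UFD structure of $\C[\z]$. After that, everything is just bookkeeping built on Lemma~\ref{zl33}; notably, the exchange relations themselves play no direct role in this argument beyond their appearance in the proof of Lemma~\ref{zl33}.
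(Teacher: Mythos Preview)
Your approach matches the paper's: factor $z=ab$, force $a,b$ to be Laurent monomials in $\z$ via a UFD argument, then use Lemma~\ref{zl33} to push the cluster-variable exponents to be nonnegative and conclude that one factor lives in the stable variables alone. The paper's proof proceeds in exactly this way.

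However, there is a genuine gap in your final step. You write ``under the standing assumption that stable variables are nowhere vanishing on $V$'', but that is \emph{not} the standing assumption of Proposition~\ref{lostprop}: the localization $\A^V_\C$ is taken only with respect to those stable variables that do not vanish on $V$, so some $z_{n+j}$ may well vanish on $V$. In that case a Laurent monomial such as your $b=c_b\prod_{i>n} z_i^{\beta_i}$ need not be a unit in $\O(V)$ (take $b=z_{n+m}$ with $z_{n+m}$ vanishing on $V$), and your contradiction dissolves. Lemma~\ref{zl33} gives you nothing about the exponents of stable variables, so at this point you have no control over the $\beta_i$ for the possibly-vanishing ones.

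The paper closes this gap by invoking a sharper form of the Laurent phenomenon (Proposition~11.2 of \cite{CAII}): elements of the cluster algebra are Laurent polynomials in the cluster variables with \emph{polynomial} coefficients in the stable variables. After localizing at the non-vanishing stable variables, $a$ and $b$ are therefore Laurent in $z_1,\dots,z_{n+m'}$ but genuinely polynomial in $z_{n+m'+1},\dots,z_{n+m}$ (the possibly-vanishing ones). Once the UFD argument forces $a$ and $b$ to be monomials, the exponents of the possibly-vanishing stable variables are automatically nonnegative, and together with Lemma~\ref{zl33} this forces one factor to be a Laurent monomial in the \emph{non-vanishing} stable variables only, hence a unit in $\O(V)$. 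To repair your argument you need exactly this refinement; the plain Laurent phenomenon (Theorem~3.1 of \cite{FZ2}) is not enough.
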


\proof
Without loss of generality, assume that $z_{n+1}=x_{n+1},\dots, z_{n+m'}=x_{n+m'}$ do not vanish on $V$, and $z_{n+m'+1}=x_{n+m'+1},\dots,
z_{n+m}=x_{n+m}$ may vanish on $V$. Moreover, assume to the contrary that $z=fg$, where $f$ and $g$ are non-invertible regular 
functions on $V$. By condition (i) and Proposition~11.2 of \cite{CAII}, both
$f$ and $g$ are Laurent polynomials in $z_1,\dots,z_{n+m'}$ whose coefficients are polynomials in $z_{n+m'+1},\dots,z_{n+m}$. Applying the same argument as in the proof of Lemma~\ref{zl33}, we see that both $f$ and $g$ are, in fact, Laurent monomials in $z_1,\dots,z_{n+m}$ and that 
$z_{n+1},\dots,z_{n+m'}$ enter both $f$ and $g$ with a non-negative degree. Moreover, by Lemma~\ref{zl33}, each cluster variable $z_1,\dots,z_n$ enters both $f$ and $g$ with a non-negative degree. This can only happen if one of $f$ and $g$ is invertible in 
$\O(V)$, a contradiction.
\endproof

Denote by $U_0\subset V$ the locus of all $t\in V$ such that $x_i(t)\ne 0$ for all $i\in [1,n]$. Besides, denote by $U_k\subset V$ 
the locus of all $t\in V$ such that $x_i(t)\ne 0$ for all $i\in [1,n]\setminus k$ and $\bar x_k(t)\ne0$. 

\begin{lemma}\label{zl36}
Let $U=\cup_{i=0}^n U_i$, then $\codim V\setminus U\ge 2$.
\end{lemma}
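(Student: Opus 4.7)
The plan is to decompose $V\setminus U$ according to how a point fails to lie in any $U_i$, and to write it as a finite union of intersections of two cluster-variable zero loci, each of codimension at least $2$.

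For $t\in V$ let $S(t)=\{k\in[1,n]:x_k(t)=0\}$. Unpacking the definitions: $t\notin U_0$ iff $S(t)\neq\emptyset$, and $t\notin U_k$ iff some $x_i$ with $i\ne k$ vanishes at $t$ or $\bar x_k(t)=0$. Splitting on $|S(t)|$ yields
\[
V\setminus U\;\subseteq\;\bigcup_{1\le j<k\le n}\bigl(\{x_j=0\}\cap\{x_k=0\}\bigr)\;\cup\;\bigcup_{k=1}^{n}\bigl(\{x_k=0\}\cap\{\bar x_k=0\}\bigr),
\]
since points with $|S(t)|\ge 2$ fall into the first family, while for a point with $S(t)=\{k_0\}$ the requirement $t\notin U_{k_0}$ forces $\bar x_{k_0}(t)=0$ (no other $x_i$ vanishes), placing it in the second family.

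For the codimension bound I would invoke that $\O(V)$ is a UFD (since $V$ is Zariski open in $\C^{n+m}$), together with Lemma~\ref{zl34}, which says that every $x_i$ and every $\bar x_k$ is irreducible in $\O(V)$. Each of $\{x_k=0\}$ and $\{\bar x_k=0\}$ is therefore an irreducible hypersurface, and an intersection of two such hypersurfaces has codimension $\ge 2$ as soon as the defining functions are non-associate.

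The remaining task, and the only real obstacle, is to check non-associateness for the pairs appearing in the decomposition. For $x_j,x_k$ with $j\ne k$ this is immediate: the cluster $\x$ is algebraically independent, so two of its variables cannot be unit multiples of one another. For the pair $(x_k,\bar x_k)$ I would use the exchange relation $x_k\bar x_k=P$, where $P$ is a binomial in the cluster variables other than $x_k$: if $\bar x_k=u\,x_k$ for some unit $u\in\O(V)^\times$, then $P=u\,x_k^2$, which is incompatible with $P$ lying in the subring generated by the variables different from $x_k$ (from which $x_k$ is algebraically independent). Once non-associateness is established, the codimension bound for each term in the decomposition follows, and the lemma is proved.
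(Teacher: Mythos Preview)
Your decomposition of $V\setminus U$ and the reduction to non-associateness of the relevant pairs of irreducibles is exactly what the paper's one-line proof (``follows immediately from Lemma~\ref{zl34} and conditions (ii) and (iii)'') is gesturing at. The gap is in how you justify non-associateness. Algebraic independence alone does \emph{not} prevent two regular functions from being associate in $\O(V)$: on $V=\C^3\setminus\{z_3=0\}$, the functions $z_1$ and $z_1z_3$ are algebraically independent yet associate, since $z_3$ is a unit. So ``the cluster $\x$ is algebraically independent, so two of its variables cannot be unit multiples of one another'' is not a valid inference. The same objection applies to the pair $(x_k,\bar x_k)$: from $P=u\,x_k^2$ you cannot read off a contradiction with $P\in\C[x_i:i\ne k]$, because the unit $u\in\O(V)^\times$ need not lie in that subring (nor in any ring over which $x_k$ is algebraically independent).

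The repair is to invoke Lemma~\ref{zl33} (hence condition~(i) and the Laurent phenomenon) rather than bare algebraic independence. If $x_j$ and $x_k$ were associate, the Laurent monomial $x_jx_k^{-1}$ would be regular on $V$ with a negative exponent on the cluster variable $x_k$, contradicting Lemma~\ref{zl33} directly. If $x_k$ and $\bar x_k$ were associate, then $x_k^2/P$ would be a unit, hence regular, hence by condition~(i) and the Laurent phenomenon (exactly as in the proof of Lemma~\ref{zl33}) a Laurent polynomial in the variables of $\x$; since the $x_i$ are algebraically independent, this would force the binomial $P$ to divide the monomial $x_k^2$ in $\C[x_1^{\pm1},\dots,x_{n+m}^{\pm1}]$, which is impossible. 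With this correction your argument is complete and coincides with the paper's intended proof.
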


\proof
Follows immediately from Lemma~\ref{zl34} and conditions (ii) and (iii).
\endproof

Assume that there exists $f\in\A^V_\C$ that is not regular on $V$. Recall that $V$ is 
the complement of a finite union of irreducible hypersurfaces $D_i$ in $\C^{n+m}$. Therefore, the divisor of poles of $f$ has
codimension~$1$ in $\C^{n+m}$. Since $f$ is not regular on $V$, this latter divisor does not lie entirely in the union of $D_i$,
and hence its intersection with $V$ has codimension~$1$ in $V$. Therefore, by Lemma~\ref{zl36}, it intersects $U$ nontrivially.
To complete the proof, note that by Proposition~11.2 of \cite{CAII}, any function in $\A^V_\C$ is regular on $U$, a contradiction.
\endproof

Let $\Poi$ be a Poisson bracket on the ambient field $\FFF$. 
We say that it is {\em compatible} with the cluster algebra $\A(B)$ if, for any 
cluster $\x=(x_1,\dots,x_{n+m})$,  one has
$\{x_i,x_j\}=\omega_{ij} x_ix_j,$
where $\omega_{ij}\in\Z$ are
constants for all $i,j\in[1,n+m]$. The matrix
$\Omega^{\x}=(\omega_{ij})$ is called the {\it coefficient matrix\/}
of $\Poi$ (in the basis $\x$); clearly, $\Omega^{ \x}$ is
skew-symmetric. A complete description of Poisson brackets compatible with $\A(B)$
in the case $\rank B=n$ is given in \cite{GSV1}.

\subsection{Networks on surfaces with boundaries} 
\label{networks}
Let $ S$ be a disk with $c\ge 0$ holes,
so that its boundary $\partial S$ has $c+1$ connected components, and let 
$G=(V,E)$ be a directed graph embedded in $ S$ 
with the vertex set $V$ and the edge set $E$. 
Exactly $r$ of its vertices are located on the boundary $\partial S$.
They are denoted $b_1,\dots,b_r$ and
called {\it boundary vertices\/}. Each boundary vertex is labeled
as a source or a sink. A {\it source\/} is
a vertex with exactly one outcoming edge and no incoming edges.
{\it Sinks\/} are defined in the same way, with the direction of the single edge
reversed. The number of sources is denoted by $n$ and the number of sinks by $m=r-n$.
All the internal vertices of $G$ have degree~$3$ and are of two types: either they have exactly one
incoming edge, or exactly one outcoming edge. The vertices of the first type are called
(and shown on figures) {\it white}, those of the second type, {\it black}. 

A pair $(v,e)$, $v\in V$, $e\in E$, is called a {\it flag\/} if $v$ is an endpoint of $e$. To each flag 
$(v,e)$ we assign an independent variable $x_{v,e}$. 
Let $u$ and $v$ be two endpoints of $e$. The {\it edge weight\/} $w_{e}$ is defined by $w_{e}=x_{v,e}x_{u,e}$.
A {\it perfect network\/} $N=(G,w,\rho_{1},\dots,\rho_{c})$ is obtained from $G$ weighted as above 
by adding $c$ nonintersecting oriented curves $\rho_{i}$ (called {\it cuts\/}) in such a way that cutting $ S$ along $\rho_{i}$
makes it into a disk (note that the endpoints of each cut belong to distinct connected components of $\partial S$). 
The points of the {\it space of edge weights\/} $\EE_N=(\R\setminus 0)^{|E|}$ (or $(\C\setminus 0)^{|E|}$) can be 
considered as copies of the  
 graph $G$  with edges weighted by nonzero numbers obtained by specializing the variables $x_{v,e}$ to nonzero values.

Assign an independent variable $\lambda_{i}$ to each cut $\rho_{i}$. The weight of a path $P$ between two boundary 
vertices is defined as the product of the weights of all edges constituting the path times a Laurent monomial in $\lambda_{i}$. 
Each intersection point of $P$ with $\rho_{i}$ contributes to this monomial $\lambda_{i}$ if the oriented tangents to $P$ and $\rho_i$
at this point form a positively oriented basis, and $\lambda_i^{-1}$ otherwise (assuming that all intersection points are transversal).
Besides, the sign of the monomial is defined via the rotation number
of a certain closed curve built from $P$ itself, cuts and arcs of $\partial S$. For a detailed description of the corresponding
constructions, see \cite{GSV3, Postnikov} in the case $c=0$ (networks in a disk, no cuts needed, the path weight is a signed 
product of the edge weights) and \cite{GSV4} in the case $c=1$ (networks in an annulus, one cut $\rho$ and one additional independent
variable $\lambda$ involved, the path weight is a signed product of the edge weights times an integer power of $\lambda$).
The {\it boundary measurement\/} between a source $b_{i}$ and a sink $b_{j}$ is then defined as the sum of path weights over all 
(not necessary simple) paths from $b_{i}$ to $b_{j}$. It is proved in the above cited papers that a boundary measurement is a rational
function in the weights of edges (in case of the disk) or in the weights of edges and $\lambda$ (in case of the annulus).

Boundary measurements are organized in the {\it boundary measurement matrix}, thus giving rise to the {\it boundary measurement map\/}
from $\EE_N$ to the space of $n\times m$ matrices (for $c=0$), or the space of $n\times m$ rational matrix 
functions (for $c=1$). The gauge group acts on $\EE_N$ as follows: for any internal vertex $v$ of $N$ and any Laurent monomial $L$ 
in the weights $w_e$ of $N$, the weights of all edges leaving $v$ are multiplied by $L$, and the weights of all edges entering $v$ are
 multiplied by $L^{-1}$. Clearly, the weights of paths between boundary vertices, and hence boundary measurements, 
 are preserved under this action.
Therefore, the boundary measurement map can be factorized through the space $\FF_N$ defined as the 
quotient of $\EE_N$ by the action of the gauge group. In \cite{GSV4} we explained that $\FF_N$ can be identified 
with the relative cohomology group $H^1(G,G\cap\partial S)$ with
coefficients in the multiplicative group of nonzero real numbers. This gives rise to the representation
$$
\FF_N=H^{1}(G\cup\partial S)/H^{1}(\partial S)\oplus 
H^{0}(\partial S)/H^{0}(G\cup\partial S)=\FF_{N}^{f}\oplus\FF_{N}^{t}.
$$

The space $\FF_N^f$ can be described as follows. The graph $G$ divides $ S$ into a finite number of connected components called
\emph{faces}. The boundary of each face consists of edges of $G$ and, possibly, of several arcs of 
$\partial S$. A face is called {\it bounded\/} if its boundary contains only edges of $G$ and {\it unbounded\/} otherwise. 
Given a face $f$, we define its {\it face weight\/} $y_f$ as the function on $\EE_N$ that assigns to the 
edge weights $w_e$, $e\in E$, the value
\[                      
y_f=\prod_{e\in\partial f}w_e^{\gamma_e},
\]                       
where $\gamma_e=1$ if the direction of $e$ is compatible with the counterclockwise orientation of the 
boundary $\partial f$ and $\gamma_e=-1$ otherwise. Face weights are invariant under the gauge group action, 
and hence are functions on $\FF_N^f$, and, moreover, form a basis in the space of such functions.

In \cite{GSV3, GSV4} we have studied the ways to turn the space of edge weights into a Poisson manifold 
by considering Poisson brackets on the 
space of flag variables satisfying certain natural conditions. We proved that all such Poisson brackets on $\EE_N$ form a 6-parameter
family, and that this family gives rise to a 2-parameter family of Poisson brackets on $\FF_N$. In what follows we are interested in
a specific member of the latter family (obtained by setting $\alpha=1/2$ and $\beta=-1/2$ in the notation of 
\cite{GSV3, GSV4}). For the reasons that will be explained later, we call this bracket the {\it standard\/} 
Poisson bracket on $\FF_N$. The corresponding 4-parameter family of Poisson brackets on $\EE_N$ is called standard as well.

Given a perfect network $N$ as above,
define the {\it directed dual network\/} $N^*=(G^*,w^*)$ as follows. Vertices of $G^*$ are the faces of $N$. 
Edges of $G^*$ correspond to the edges of $N$ that connect either two internal vertices of different colors, 
or an internal vertex with a boundary vertex; note that there might be several edges between the same pair of 
vertices in $G^*$. An edge $e^*$ of $G^*$ corresponding to $e$ is directed in such a way that 
the white endpoint of $e$ (if it exists) lies to the left of $e^*$ and 
the black endpoint of $e$ (if it exists) lies to the right of $e$. 
The weight $w^*(e^*)$ equals $1$ if both endpoints of $e$ are internal vertices, and $1/2$ if one of the 
endpoints of $e$ is a boundary vertex. 

\begin{proposition}\label{PSviay}
The restriction of the standard Poisson bracket on $\FF_N$ to the space $\FF_N^f$ is given by
\[
\{y_f,y_{f'}\}=\left(\sum_{e^*: f\to f'} w^*(e^*)-
\sum_{e^*: f'\to f} w^*(e^*)\right)y_fy_{f'}.
\]
\end{proposition}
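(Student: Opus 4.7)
The plan is to derive the formula from the explicit form of the standard Poisson bracket on flag variables, using locality and Leibniz, then reorganize the resulting sum over vertices into a sum over edges (that is, over dual edges).

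First I would take logarithms: since $\log y_f = \sum_{e\in\partial f}\gamma_e\log w_e$ and $\log w_e = \log x_{v,e}+\log x_{u,e}$ for $e$ with endpoints $u,v$, the bracket $\{\log y_f,\log y_{f'}\}$ expands, by the Leibniz rule, into a double sum of brackets $\{\log x_{v,e},\log x_{u',e'}\}$ of flag variables. The standard Poisson bracket of \cite{GSV3, GSV4} is defined so that flag variables $x_{v,e}$ and $x_{v',e'}$ Poisson-commute unless $v=v'$, so only flags sharing a common vertex contribute. Consequently the whole bracket decomposes as a sum of local contributions, one per vertex that lies on $\partial f\cap\partial f'$.

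Next I would analyze each such vertex $v$ separately. For an internal trivalent vertex, the three incident faces $f_1,f_2,f_3$ (in cyclic order) inherit specific signs $\gamma_e$ from the local edge orientations, which in turn are constrained by $v$ being white (one incoming, two outgoing) or black (two incoming, one outgoing). Plugging the $\alpha=\tfrac12,\beta=-\tfrac12$ specialization of the 6-parameter standard bracket on flags into the expansion gives, for each ordered pair of faces $(f_i,f_j)$ adjacent at $v$, a local contribution of $\pm\tfrac12$ to $\{y_{f_i},y_{f_j}\}/(y_{f_i}y_{f_j})$, with the sign dictated by the color of $v$ and by the orientation of the edge $e\in\partial f_i\cap\partial f_j$ incident to $v$. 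A boundary vertex, being the endpoint of a single edge, contributes only the $\pm\tfrac12$ from the flag brackets at that vertex, since no other flag at $v$ enters any face weight.

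Finally I would aggregate contributions edge by edge. An edge $e$ of $N$ separating faces $f$ and $f'$ is the common incident edge of two vertices $v,v'$; the two local contributions match the two endpoints of the dual edge $e^\ast$. If $v,v'$ are internal of opposite colors, the two $\pm\tfrac12$ contributions add with the same sign to give $\pm1$, which is exactly $w^\ast(e^\ast)$ with orientation of $e^\ast$ determined by the white-left/black-right rule; if $v,v'$ are internal of the same color, the two contributions cancel, consistent with the fact that no such $e^\ast$ exists in $G^\ast$; if one endpoint is boundary, only one $\pm\tfrac12$ survives, matching $w^\ast(e^\ast)=\tfrac12$. Summing over all edges of $G$ gives the asserted formula.

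The main obstacle is the bookkeeping in the vertex-local step: verifying in every case (white/black, choice of which incident edge is the singular one, cyclic position of $f$ vs.\ $f'$ among the three adjacent faces) that the flag-bracket contribution indeed produces the $\pm\tfrac12$ with the orientation matching the white-left/black-right convention used to define $G^\ast$. Once this local compatibility is checked, the global assembly and the cancellation between same-color endpoints follow mechanically from Leibniz and the orientation rule.
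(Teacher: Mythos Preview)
Your proposal is a correct direct argument. Note, however, that the paper does not actually prove this proposition: it simply states that for networks in a disk the result is a special case of Lemma~5.3 of \cite{GSV3}, and that for other surfaces the proof is literally the same. What you have written is essentially the computation underlying that cited lemma --- localizing the bracket to vertices via the flag-variable description, evaluating the trivalent and boundary cases, and reassembling along edges --- so your route is not different from the paper's, just spelled out rather than outsourced. The bookkeeping caveat you flag is genuine but routine; once the local $\pm\tfrac12$ signs are checked against the white-left/black-right convention (a finite case analysis), the aggregation step is automatic.
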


For networks in a disk, the above proposition is a special case of Lemma~5.3 of \cite{GSV3}. 
For other surfaces the proof is literally the same.

In what follows, we will deal with networks of two kinds: acyclic networks in a disk with the same number of nonalternating
sources and sinks, and networks in an annulus obtained from the networks of the first kind by a certain construction, 
to be described below. 

In the former case we assume that $n$ sources are numbered clockwise and are followed by $n$ sinks numbered counterclockwise.
The weight of a path in this case is exactly the product of edge weights involved. The boundary measurements are organized into a
$n\times n$ matrix $X$ in such a way that $X_{ij}$ is the boundary measurement between the $i$th source and $j$th sink.
One can concatenate two networks of this kind 
by gluing the sinks of the former to the sources of the latter.
If $X_1$,  $X_2$ are  the
matrices associated with the two networks, then the matrix associated with their concatenation
is $X_1 X_2$. This fact can be used to visualize parametrization (\ref{factorGuv}).
Indeed, an $n\times n$ diagonal  matrix $\diag(d_1,\ldots,d_n)$ and elementary
bidiagonal matrices $E^-_i(l)$ and  $E^+_j(u)$ defined by~\eqref{elembi}
correspond to building blocks shown on Figure~\ref{fig:factor} a, b and c, respectively; 
all weights not shown explicitly are equal to~1. Note that building blocks themselves are not
networks, since their edge weights do not comply with the rules introduced above. 
However, as we will see below, objects glued from building blocks comply with all the 
rules.

\begin{figure}[ht]
\begin{center}
\includegraphics[height=3.0cm]{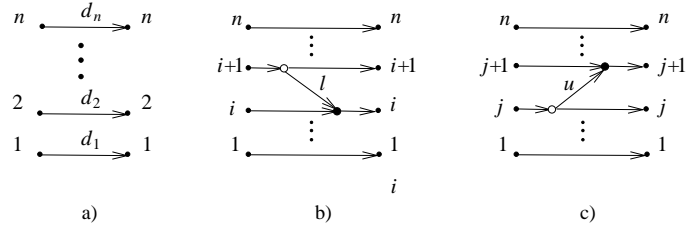}
\caption{Three building blocks used in matrix factorization}
\label{fig:factor}
\end{center}
\end{figure}

The concatenation of  $n(n-1)$ building blocks of the second and the third types and one building 
block of the first type, in an appropriately chosen order and with each building block having its own 
nontrivial weights, describes a generic element of $GL_n$ (see, e.g. \cite{Fallat}). The structure of the obtained
network is given by Figure~\ref{fig:genfactor}. Here and in what follows we use the gauge group action
to decrease the number of parameters of networks in question. In particular, 
this network has $2n(n-1)$ internal vertices, therefore, 
one can use the gauge group action to change the weights of $2n(n-1)$ edges to $1$. It is convenient to choose 
these edges to be all the horizontal edges except for one middle edge in each horizontal chain. The weights on the remaining edges are Laurent monomials
in the initial weights of the network. For example, if the endpoints of an edge $e$ belong to levels $i$ and $i+1$, then 
$u_e=w_ew_{P_{i+1}}/w_{P_i}$, where $P_i$ and $P_{i+1}$ are the horizontal paths from the endpoints of $e$ to the sinks 
$i$ and $i+1$, respectively.  

\begin{figure}[ht]
\begin{center}
\includegraphics[height=3.2cm]{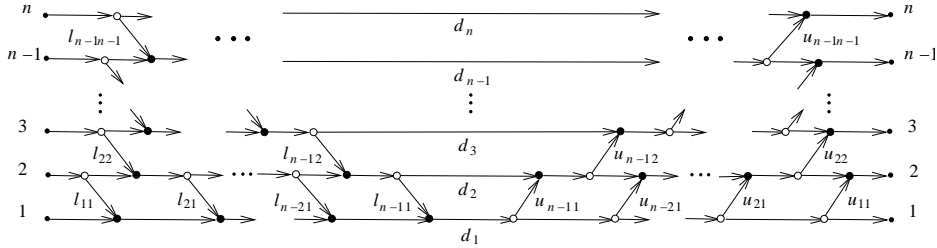}
\caption{Generic planar network; the weights of edges are Laurent monomials of the initial weights}
\label{fig:genfactor}
\end{center}
\end{figure}

The following result, which is a special case of Theorem~4.1 from \cite{GSV3}, explains why we call 
the bracket in consideration standard.

\begin{theorem} \label{PSGL}
For any network $N$ as above with $n$ sources and $n$ sinks the map from $\EE_{N}$ to the space of $n\times n$ matrices
given by the boundary measurement matrix is Poisson with respect to any standard Poisson bracket on $\EE_N$ and the standard
Sklyanin bracket on $GL_n$.
\end{theorem}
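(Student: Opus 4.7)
The plan is to use induction on the number of building blocks, leveraging the Poisson--Lie property of the Sklyanin bracket. Any network $N$ of the described type can be built, up to the gauge action, as a concatenation of elementary blocks of the three kinds shown in Figure~\ref{fig:factor}, so that the boundary measurement matrix factors as $X = X_1 X_2 \cdots X_L$ with each $X_k$ of the form $E_i^+(t)$, $E_i^-(t)$, or $\diag(d_1,\ldots,d_n)$. This reduction is exactly what the concatenation visualization of the parametrization~\eqref{factorGuv} provides.

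First I would verify the base case: the Poisson property for each of the three elementary building blocks separately. The diagonal block sends its weights $(d_1,\ldots,d_n)$ to $\diag(d_1,\ldots,d_n)\in\HH$, and a direct calculation shows that the Sklyanin bracket restricted to $\HH$ vanishes, since for $f_i(X)=d_i$ the gradient satisfies $\nabla f_i(X)\cdot X = e_{ii}$, which lies in the kernel of $R$. This matches the trivial bracket among the independent edge weights on the diagonal block. For the bidiagonal blocks $E_i^\pm(t)$, the image is a one-parameter subgroup, so self-brackets vanish trivially, which is consistent with the single scalar edge weight carrying no Poisson bracket with itself.

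For the inductive step, I would invoke that the Sklyanin bracket is Poisson--Lie, so that the multiplication map $GL_n\times GL_n\to GL_n$ is Poisson. Hence if $X=X'X''$ with $X'$ coming from a subnetwork $N'$ and $X''$ from a subnetwork $N''$, and if the boundary measurement maps for $N'$ and $N''$ are separately Poisson, then so is the map for $N$, provided the standard bracket on $\EE_N$ pushes forward the product bracket on $\EE_{N'}\times\EE_{N''}$ to the product Sklyanin bracket on $GL_n\times GL_n$. Combined with the base case, this closes the induction and yields the theorem.

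The main obstacle is precisely this concatenation compatibility at the glued boundary vertices: one must show that the standard bracket on $\EE_N$ restricts to the product bracket on $\EE_{N'}\times\EE_{N''}$ with the appropriate flag identifications at shared sinks and sources. This is where the specific parameter choice $\alpha=1/2$, $\beta=-1/2$ in the definition of the standard bracket is essential \emph{— }the contributions of the flag variables at the shared boundary vertices are engineered so that they reassemble, after multiplication, into exactly the standard $R$-matrix action appearing in the Sklyanin formula. Since the statement is identified as a special case of Theorem~4.1 of \cite{GSV3}, the technical verification of this compatibility can be invoked from there once the three elementary cases above are checked.
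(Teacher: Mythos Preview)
The paper does not prove this theorem at all: it simply records it as ``a special case of Theorem~4.1 from \cite{GSV3}'' and moves on. So there is no proof in the paper to compare against, and your proposal is really an attempt to sketch what the argument in \cite{GSV3} might look like.

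Your outline is the natural one and is close in spirit to how such statements are typically proved, but there are two genuine soft spots you should be aware of. First, you assert that any network $N$ ``as above'' can be built as a concatenation of the three elementary building blocks. The theorem, however, is stated for arbitrary acyclic networks in a disk with $n$ nonalternating sources and $n$ sinks, not only for those arising from factorizations~\eqref{factorGuv}; the paper does not claim every such network decomposes into elementary blocks, so this reduction needs justification. Second, the paper explicitly warns that ``building blocks themselves are not networks, since their edge weights do not comply with the rules introduced above.'' Hence the standard Poisson bracket on $\EE_N$ is not even defined for a single building block in isolation, so your base case is not literally a special instance of the setup; you would need to work with genuine networks (with the correct trivalent internal vertex structure) or argue separately that the bracket extends to these degenerate pieces.

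Finally, you correctly identify the concatenation compatibility at glued boundary vertices as the heart of the matter, and then invoke \cite{GSV3} to handle it. But that step \emph{is} the theorem: the specific choice $\alpha=1/2$, $\beta=-1/2$ is what makes the cross-terms at the gluing reproduce the Sklyanin $R$-matrix, and verifying this is the entire content. So in the end your proposal, like the paper itself, defers the substance to \cite{GSV3}.
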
  

\br {\rm Note that the definition of the R-matrix $R_{\alpha,\beta}$ in \cite{GSV3} contains a superfluous factor $1/2$.}
\er

Networks in an annulus that we study in this paper are obtained from the above described networks in a disk by 
a gluing procedure described in detail in Section 5. These networks have one source and one sink, both lying 
on the same connected component of the boundary. The other connected component of the boundary does not carry 
boundary vertices, and hence for our networks $H^0(\partial S)=H^0(G\cup\partial S)$, which implies
$\FF_N=\FF_N^f$. Therefore, the standard Poisson bracket on 
$\FF_N$ is completely described by Proposition~\ref{PSviay}.

\section{Coxeter double Bruhat cells}

 We start this section with describing a particular instance of the Berenstein-Fomin-Zelevinsky 
parametrization \cite{BFZ, FZ1} in the case of  Coxeter double Bruhat cells in $GL_n$.

Denote $s_{[p,q]}=s_ps_{p+1}\cdots s_{q-1}$
for $1\le p< q \le n$ and recall that every Coxeter element $v\in S_n$ can be written in the form
\be
\label{factoru}
v=   
s_{[i_{k-1} , i_k]}\cdots s_{[i_{1} , i_2]}s_{[1 , i_1]}
\ee
for some subset $I=\{1=i_0 < i_1 < ...< i_k=n\}\subseteq[1,n]$. Besides, 
define $L=\{1=l_0 < l_1 < ...< l_{n-k}=n\}$ by $\{ l_1 < \ldots < l_{n-k-1}\} = [1,n] \setminus I$.

\bl\label{u_inv}
Let $v$ be given by~{\rm \eqref{factoru}}, then
\[
v^{-1} =  
s_{[l_{n-k-1} , l_{n-k}]}\cdots s_{[l_{1} , l_2]}s_{[1 , l_1]}.
\]
\el

\begin{proof} We use induction on $n$.  Denote the right-hand side of the above relation by $\bar v$. The 
index $n-1$  belongs either to $I$ or to $L$.
In the latter case $l_{n-k-1} = n-1$, and we have 
$v=s_{[i_{k-1} , n]}\cdots s_{[i_{1} , i_2]}s_{[1 , i_1]}\ =
s_{[i_{k-1} , n-1]}\cdots s_{[i_{1} , i_2]}s_{[1 , i_1]}\ s_{n-1}$ and
$\bar v= s_{n-1} s_{[l_{n-k-2} , l_{n-k-1} ]}\cdots s_{[l_{1} , l_2]}s_{[1 , l_1]}$. Then
$v=v's_{n-1}$ and $\bar v=s_{n-1}\bar v'$, where $v'$, $\bar v'$ are Coxeter elements in $S_{n-1}$ corresponding to
index sets $I \setminus \{n\}\cup \{n-1\}$ and $L\setminus \{n\}$, and hence
$ v \bar v = v' \bar v'=1$ by the induction hypothesis.
Otherwise, if $n-1$  belongs  to $I$, we interchange the roles of $v$ and $\bar v$ and use the same argument.
\end{proof}

\bl
\label{u-matrix}
The permutation matrix that corresponds to a Coxeter element $v$ is 
\[
\tilde v = \sum_{j=1}^k e_{i_{j-1} i_j} + \sum_{j=1}^{n-k} e_{l_j l_{j-1}}.
\]
\el

\begin{proof} We use the same inductive argument as in the proof of Lemma \ref{u_inv}. Assuming
that $n-1 \in L$, the relation   $v=v's_{n-1}$ and the induction hypothesis imply
$\tilde v = ( e_{1 i_1} + \ldots +  e_{i_{k-1} n-1} + e_{l_1 1} + \ldots + e_{n-1 l_{n-k-2}} + e_{nn} ) 
( e_{11} + \ldots +  e_{n-2 n-2} + e_{n n-1} + e_{ n-1 n} ) =
e_{1 i_1} + \ldots +  e_{i_{k-1} n} + e_{l_1 1} + \ldots + e_{n-1 l_{n-k-2}} + e_{n n-1}$ as claimed.
\end{proof}

Let now $(u,v)$ be a pair of Coxeter elements and 
\be\label{IL}
 \begin{split}
 &I^+=\{1=i^+_0 < i^+_1 < ...< i^+_{k^+}=n\}, \\
 &I^-=\{1=i^-_0 < i^-_1 < ...< i^-_{k^-}=n\},\\
 &L^+=\{1=l^+_0<  l^+_1 < \ldots < l^+_{n-k^+-1}<  l^+_{n-k^+}= n\},\\
 &L^-=\{1=l^-_0<  l^-_1 < \ldots < l^-_{n-k^--1}<  l^-_{n-k^-}= n\}
 \end{split}
 \ee
  be subsets of $[1,n]$ that correspond to $v$ and $u^{-1}$ in the way just described. 
For a set of complex parameters $c_1^-,\ldots, c_{n-1}^-; c_1^+,\ldots, c_{n-1}^+; d_1, \ldots, d_n$,
define matrices $D=\diag (d_1,\ldots, d_n)$, 
\begin{equation}\label{Cj}
C^+_j= \sum_{\alpha=i^+_{j-1}}^{i^+_j-1} c^+_\alpha e_{\alpha,\alpha+1},\ j\in [1, k^+],\quad 
C^-_j= \sum_{\alpha=i^-_{j-1}}^{i^-_j-1} c^-_\alpha e_{\alpha+1,\alpha},\ j\in [1,  k^-],
\end{equation}
and
$$
\bar C^+_j= \sum_{\alpha=l^+_{j-1}}^{l^+_j-1} c^+_\alpha e_{\alpha,\alpha+1},\ j\in [1, n-k^+],\quad 
\bar C^-_j= \sum_{\alpha=l^-_{j-1}}^{l^-_j-1} c^+_\alpha e_{\alpha+1,\alpha},\ j\in 1, n-k^-].
$$

\bl
\label{X-Xinv}
A generic  element $X\in G^{u,v}$ can be written as
\begin{equation}
X=  (\one -C^-_1)^{-1} \cdots  (\one - C^-_{k^-})^{-1} D (\one - C^+_{k^+})^{-1}\cdots (\one -
C^+_1)^{-1}, 
\label{factorI}
\end{equation}
and its inverse can be factored as
\begin{equation}
X^{-1}=  
(\one + \bar C^+_{n-k^+})^{-1}\cdots (\one +
\bar C^+_1)^{-1} D^{-1} (\one +
\bar C^-_1)^{-1} \cdots  (\one + \bar C^-_{k^-})^{-1}.
\label{factorinv}
\end{equation}
\el

\begin{proof}
It is easy to see that
\be\label{Xintoel}
\begin{split} 
(\one - C^+_{j})^{-1} &= E^+_{i^+_{j-1}}(c^+_{i^+_{j-1}})\cdots  E^+_{i^+_{j}-1}(c^+_{i^+_{j}-1}),\\   
(\one - C^-_{j})^{-1} &= E^-_{i^-_{j}-1}(c^-_{i^-_{j}-1})\cdots  E^-_{i^-_{j-1}}(c^-_{i^-_{j-1}}),\\
(\one + \bar C^+_{j})^{-1} &= E^+_{l^+_{j-1}}(-c^+_{l^+_{j-1}})\cdots  E^+_{l^+_{j}-1}(-c^+_{l^+_{j}-1}),\\  
(\one + \bar C^-_{j})^{-1} &= E^-_{l^-_{j}-1}(-c^-_{l^-_{j}-1})\cdots  E^-_{l^-_{j-1}}(-c^-_{l^-_{j-1}}).
\end{split}
\ee
 Then, by (\ref{factorGuv}) and (\ref{factoru}), a generic  $X\in G^{u,v}$
can be written as in (\ref{factorI}).
Next, the same reasoning as in the proof of Lemma \ref{u_inv} implies that
\begin{align*}
(\one -C^+_1) \cdots  (\one - C^+_{k^+})&= 
\left(\left(\prod_{s=i^+_{k-1}}^{i^+_k -1} E^+_s(c^+_s)\right)\cdots\left(\prod_{s=1}^{i^+_1 -1} E^+_s(c^+_s)\right) \right )^{-1}\\ 
&= \left (\prod_{s=l^+_{n-k-1}}^{l^+_{n-k} -1} E^+_s(-c^+_s)\right ) \cdots \left (\prod_{s=1}^{l^+_1 -1} E^+_s(-c^+_s)\right)\\
  &= (\one + \bar C^+_{n-k^+})^{-1}\cdots (\one +\bar C^+_1)^{-1}
\end{align*}
and, similarly,
$$
(\one - C^-_{k^-})\cdots (\one -
C^-_1) = (\one +
\bar C^-_1)^{-1} \cdots  (\one + \bar C^-_{k^-})^{-1}.
$$
Therefore,
\begin{align*}
X^{-1} &=  (\one -
C^+_1) \cdots  (\one - C^+_{k^+}) D^{-1} (\one - C^-_{k^-})\cdots (\one -C^-_1)\\
&= (\one + \bar C^+_{n-k^+})^{-1}\cdots (\one +\bar C^+_1)^{-1} D^{-1} (\one +
\bar C^-_1)^{-1} \cdots  (\one + \bar C^-_{k^-})^{-1}.
\end{align*}
\end{proof}

The network $N_{u,v}$ that corresponds to factorization (\ref{factorI}) is obtained by the concatenation (left to right)
of $2 n -1$ building blocks (as depicted in Fig.~\ref{fig:factor}) that correspond to elementary matrices  
\begin{align*}
&E^-_{i^-_{2}-1}(c^-_{i^-_{2}-1}), \ldots,  E^-_{1}(c^-_{1}), E^-_{i^-_{3}-1}(c^-_{i^-_{3}-1}),\ldots,  
E^-_{i^-_{2}}(c^-_{i^-_{2}}), \ldots, \\
& E^-_{n-1}(c^-_{n-1}), \ldots,   E^-_{i^-_{k^--1}}(c^-_{i^-_{k^--1}}), D,
E^+_{i^+_{k^+-1}}(c^+_{i^+_{k^+-1}}), \ldots,  E^+_{n-1}(c^+_{n-1}),\\
&\ldots, E^+_{i^+_{2}}(c^+_{i^+_{2}}),\ldots  E^+_{i^+_{3}-1}(c^+_{i^+_{3}-1}), 
E^+_{1}(c^+_{1})\cdots  E^+_{i^+_{2}-1}(c^+_{i^+_{2}-1}).
\end{align*}
This network has $4(n-1)$ internal vertices and $5n-4$ horizontal edges. Similarly to the case of
generic networks discussed in Section~\ref{networks}, one can use the gauge group action to change the
weights of all horizontal edges except for those belonging to block $D$ to~$1$.

One can use the network $N_{u,v}$ to derive expressions for factorization parameters $d_i$, $c^+_i$, $c^-_i$
in terms of matrix entries of $X$. These formulas are a simple particular case of the general formulas
by Berenstein-Fomin-Zelevinsky for restoring factorization parameters in double Bruhat cells \cite{BFZ, FZ1}.

For a matrix $A$, denote by $A_{r_1, \ldots, r_k}^{p_1, \ldots, p_k}$ its minor formed by rows numbered
$r_1<\cdots < r_k$ and columns $p_1 <\cdots < p_k $. 

\bl
\label{Xtocd}
For any $i\in [1,n]$,
\bea
\nonumber
d_i = \frac{X_{[1,i]}^{[1,i]} }{X_{[1,i-1]}^{[1,i-1]}};
\eea
for any $i\in [1,n-1]$,
\bea
\nonumber
c_i^- = \frac{X_{i^-_1, ..., i^-_{j}, i+1}^{i^-_0, ..., i^-_j} }{ X_{i^- 
_1, ..., i^-_j, i}^{i^-_0, ..., i^-_j}},\   i^-_j < i < i^-_{j+1}, & \qquad 
c_{i^-_j}^- = \frac{X_{i^-_1, ..., i^-_{j}, i^-_{j}+1}^{i^-_0, ..., i^-_j} X_{[1,i^-_j-1]}^{[1,i^-_j-1]}}{ X_{i^- 
_1, ..., i^-_j}^{i^-_0, ..., i^-_{j-1}} X_{[1,i^-_j]}^{[1,i^-_j]} }, 
\\ \nonumber
c_i^+ = \frac{X_{i^+_0, ..., i^+_j}^{i^+_1, ..., i^+_{j},i+1}}{ X_{i^ 
+_0, ..., i^+_j}^{i^+_1, ..., i^+_j,i}},\ i^+_j < i < i^+_{j+1}, & \qquad c_{i^+_j}^+ = \frac{X^{i^+_1, ..., i^+_{j}, i^+_{j}+1}_{i^+_0, ..., i^+_j} X_{[1,i^+_j-1]}^{[1,i^+_j-1]}}{ X^{i^+ 
_1, ..., i^+_j}_{i^+_0, ..., i^+_{j-1}} X_{[1,i^+_j]}^{[1,i^+_j]} }.
\eea
\el
\begin{proof} Since by (\ref{factorI}), $X_{[1,i]}^{[1,i]}=D_{[1,i]}^{[1,i]}=d_1\cdots d_i$, the first formula 
follows easily. Next, note that, for $i^-_j < i \leq i^-_{j+1}$, there is exactly one directed path in $N_{u,v}$ 
that joins the $i$th source with the $i_j^-$th sink. The weight of this path equals
$c^-_{i-1} \cdots c^-_{i_j} d_{i_j^-}$.
Thus,  there is a unique collection
of vertex-disjoint paths in $N_{u,v}$ joining sources $i^-_1,\ldots, i^-_{j}, i$ with sinks 
$i^-_0,\ldots, i^-_{j}$.
By Lindstr\"om's Lemma  \cite{KarlinMacGregor, Lindstrom}, this
implies that $ X^{i^-_1, ..., i^-_{j}, i}_{i^-_0, ..., i^-_j}$ is equal to the product of weights of these paths.
Clearly, for $ i^-_j <  i < i^-_{j+1}$,   
$ X^{i^-_1, ..., i^-_{j}, i+1}_{i^-_0, ..., i^-_j}= c^-_i  X^{i^-_1, ..., i^-_{j}, i}_{i^-_0, ..., i^-_j}$. Also,
$X^{i^-_1, ..., i^-_{j}, i^-_{j}+1 }_{i^-_0, ..., i^-_j}= c^-_{i^-_j} d^-_{i_j}  X^{i^-_1, ..., i^-_{j}}_{i^-_0, ..., i^-_{j-1}}$.
Formulae for $c^-_i$ follow from these relations. Formulas for $c^+_i$ are derived in a similar way.
\end{proof}


Let us now introduce some combinatorial data that will be useful in the following sections.

Let us fix a pair $(u,v)$ of Coxeter elements, and hence, fix the sets $I^\pm$ given by~\eqref{IL}.
For any $i\in [1,n]$ define integers $\varepsilon^\pm_i$ and $\zeta^\pm_i$
by setting
\begin{equation}
\varepsilon^\pm_i=\left \{ \begin{array}{cc} 0 & \mbox{if}\  i=i^\pm_j\
\mbox{for some}\  0 < j \leq k_\pm\\ 1 & \mbox{otherwise}\end{array}
\right.  
\label{eps}
\end{equation}
and
\begin{equation}
\zeta^\pm_i =i (1-\varepsilon^\pm_i) -\sum_{\beta=1}^{i -1} \varepsilon^\pm_\beta;
\label{nunu}
\end{equation}
note that by definition, $\varepsilon^\pm_1=1$, $\zeta^\pm_1=0$. (Here and in what follows a relation involving variables with superscripts $\pm$ is a shorthand for two similar relations: the one obtained by simultaneously replacing each $\pm$ by $+$, and the other, by $-$.) 
Further, put $M^\pm_i=\{\zeta^\pm_\alpha\ : \ \alpha=1,\ldots,i \}$ and 
\be
k^\pm_i=\max \{ j: i^\pm_j \leq i\}.                               
\label{k+/-}
\ee
Finally, put
\be
\varepsilon_i= \varepsilon_{i}^+ + \varepsilon_{i}^-               
\label{epsum}
\ee
and  
\be
 \varkappa_i= i+1 - \sum_{\beta=1}^i \varepsilon_\beta.            
 \label{kappa}
 \ee

\br\label{samenet}
{\rm It is easy to see that there exist distinct pairs $(u,v)$ and $(u',v')$ that produce the same 
$n$-tuple $\varepsilon$. The ambiguity occurs when $\varepsilon_i=1$ for some $i\in [2,n-1]$. 
By~\eqref{epsum}, this situation corresponds either to $\varepsilon_i^+=1$, $\varepsilon_i^-=0$, or to $\varepsilon_i^+=0$, $\varepsilon_i^-=1$. Consequently, the number of pairs $(u,v)$ with the identical $n$-tuple $\varepsilon$ equals $2$ power the number
of times $\varepsilon_i$ takes value $1$.
}
\er

\bl
\label{allcomb}
{\rm (i)} The $n$-tuples $\varepsilon^\pm=(\varepsilon^\pm_i)$ and $\zeta^\pm=(\zeta^\pm_i)$ uniquely
determine each other.

{\rm (ii)} For any $i\in [1,n]$,
 \begin{equation*}
\zeta^\pm_i = 
\left \{ \begin{array}{cc} j & \mbox{if}\  i=i_j^\pm\ \mbox{for
some}\  0< j \leq k_\pm\\ -\sum_{\beta=1}^{i -1} \varepsilon^\pm_\beta &
\mbox{otherwise}\end{array} \right ..
\end{equation*}

{\rm (iii)} For any $i\in [1,n]$,
 $$
 k^\pm_i=i-\sum_{\beta=1}^i \varepsilon^\pm_\beta.
$$

{\rm (iv)} For any $i\in [1,n]$,
\begin{equation*}
M^\pm_i= [ k^\pm_i -i+1, k^\pm_i ]= [1 - \sum_{\beta=1}^i
\varepsilon^\pm_\beta, i-\sum_{\beta=1}^i \varepsilon^\pm_\beta ].
\end{equation*}
\el

\begin{proof}
(i) Follows form the fact that the transformation $\varepsilon^\pm\mapsto \zeta^\pm$ defined by~\eqref{nunu} 
is given by a lower--triangular matrix with a non-zero diagonal.

(ii) By~\eqref{nunu}, the first equality is equivalent to
\be\label{twocount}
i^\pm_j=j+\sum_{\beta=1}^{i^\pm_j-1}\varepsilon^\pm_\beta.
\ee
By~\eqref{eps}, the latter can be interpreted as counting the first $i^\pm_j$ elements of $(\varepsilon^\pm_i)$: exactly $j$ of them 
are equal to $0$, and all the other are equal to $1$.

The second equality follows trivially from~\eqref{eps} and~\eqref{nunu}.   

(iii) For $i=i^\pm_j$, follows immediately from~\eqref{k+/-} and~\eqref{twocount}. For $i\ne i^\pm_j$, the same counting 
argument used in~\eqref{twocount} gives
$$
i=k^\pm_i+\sum_{\beta=1}^{i}\varepsilon^\pm_\beta.
$$

(iv) Follows from parts (ii) and (iii). 
\end{proof}

\br\label{cmvao}
{\rm 
(i) If $v=s_{n-1} \cdots s_1$, then $X$ is a lower Hessenberg matrix, and if 
$u=s_{1} \cdots s_{n-1}$, then $X$ is an upper Hessenberg matrix. 

(ii) If  
$v=s_{n-1} \cdots s_1$ {\em and} $u=s_{1} \cdots s_{n-1}$, then $G^{u,v}$ consists of
tridiagonal matrices with non-zero off-diagonal entries ({\em Jacobi matrices\/}).
In this case $I^+=I^-=[1,n]$, $\varepsilon_1^\pm=1$ and $\varepsilon_i^\pm=0$ for $i=2, \ldots, n$.

(iii) If
$u=v=s_{n-1} \cdots s_1$ (which leads to $I^+=[1,n], I^-=\{1,n\}$), then 
elements of $G^{u,v}$
have a structure of recursion operators arising in the theory of
orthogonal polynomials on the unit
circle (see, e.g. \cite{simon}).

(iv) The choice $u=v=( s_{1} s_3 \cdots ) (s_2 s_4 \cdots )$ (the so-called {\em
bipartite Coxeter element\/})
gives rise to a special kind of pentadiagonal matrices $X$ (called {\em CMV
matrices\/}), which serve
as an alternative version of recursion operators for orthogonal polynomials
on the unit
circle, see \cite{cmv, simon}.}
\er

\begin{examp}\label{runex}
{\rm
 Let $n=5$, $v =s_4 s_3 s_1 s_2$ and $u =s_3 s_2 s_1 s_4$. The network $N_{u,v}$ that corresponds
to factorization (\ref{factorI}) is shown in Figure~\ref{factorex}. 

\begin{figure}[ht]
\begin{center}
\includegraphics[height=4.0cm]{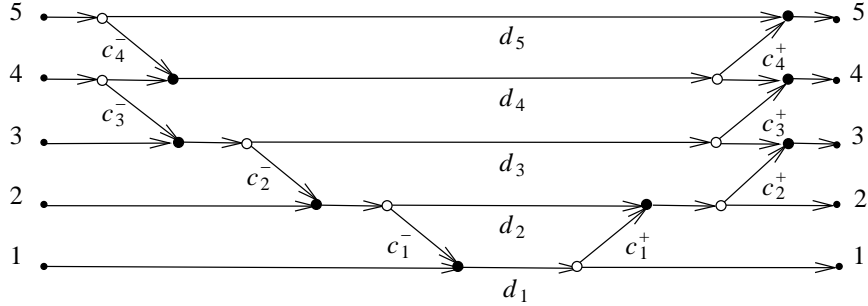}
\caption{Network representation for elements in $G^{s_3 s_2 s_1 s_4,s_4 s_3 s_1 s_2}$}
\label{factorex}
\end{center}
\end{figure}

A generic element $X\in G^{u,v}$ has a form
$$
X= (x_{ij})_{i,j=1}^5=\left (
\begin{array}{ccccc}
d_1 &  x_{11}c_1^+ & x_{12}c_2^+ & 0 & 0\\
c_1^-x_{11} & d_2+c_1^-x_{12} & x_{22}c_2^+ & 0 & 0\\
c_2^-x_{21} & c_2^-x_{22} & d_3+c_2^-x_{23} & d_3c_3^+ & 0\\
c_3^-x_{31} & c_3^-x_{32} & c_3^-x_{33} & d_4+c_3^-x_{34} & d_4c_4^+\\
0 & 0 & 0 & c_4^-d_4 & d_5+c_4^-x_{45}
\end{array}
\right ).
$$

One finds by a direct observation that $k^+=3$ and $I^+=\{i_0^+,i_1^+,i_2^+,i_3^+\}=\{1,3,4,5\}$, and hence
$L^+=\{l_0^+,l_1^+,l_2^+\}=\{1,2,5\}$. Next, $u^{-1}=s_4s_1s_2s_3$, therefore, $k^-=2$ and 
$I^-=\{i_0^-,i_1^-,i_2^-\}=\{1,4,5\}$, and hence $L^-=\{l_0^-,l_1^-,l_2^-,l_3^-\}=\{1,2,3,5\}$.
Further, 
$$
\varepsilon^+=(1,1,0,0,0), \qquad\varepsilon^-=(1,1,1,0,0),
$$
 and hence 
 $$
 \zeta^+=(0,-1,1,2,3),\qquad \zeta^-=(0,-1,-2,1,2).
 $$
Therefore, 
$$
(k^+_i)_{i=1}^5=(0,0,1,2,3), \qquad (k^-_i)_{i=1}^5=(0,0,0,1,2), 
$$
and hence 
\begin{align*}
(M^+_i)_{i=1}^5&=([0,0], [-1, 0], [-1,1], [-1,2], [-1,3]),\\ 
(M^-_i)_{i=1}^5&=([0,0], [-1, 0], [-2,0], [-2,1], [-2,2]).
\end{align*}
Finally, $\varepsilon=(2,2,1,0,0)$ and
$\varkappa=(0,-1,-1,0,1)$. By Remark~\ref{samenet}, there is one more pair of Coxeter elements such that produces the same $5$-tuples:
$v'=s_4s_1s_2s_3$ and $u'=s_2s_1s_3s_4$.

To illustrate Lemma~\ref{Xtocd}, we find $c_3^-$ and $c_4^-$. First, $1=i_0^-<3<i_1^-=4$, so
the corresponding formula in Lemma~\ref{Xtocd} gives $c_3^-=X^1_4/X^1_3$. Second, $4=i_1^-$, so the other formula in 
Lemma~\ref{Xtocd} gives $c_4^-=X^{1,4}_{4,5}X^{[1,3]}_{[1,3]}/X^1_4X^{[1,4]}_{[1.4]}$. It is easy to check that the 
right hand sides of both formulas indeed produce correct answers. 
We will use this example as our running example in the next section.
}
\end{examp}

We conclude this section with a proposition that explains how to recognize an
element of a Coxeter double Bruhat cell in $GL_n$. For any two subsets $R,P\subseteq [1,n]$ and a matrix $X\in GL_n$ 
we denote by $X(R,P)$ the submatrix of $X$ formed by rows $r\in R$ and columns $p\in P$.

\bp
An element $X\in GL_n$ belongs to a Coxeter double Bruhat cell if and only if the following conditions
hold for any $l\in [1,n-1]$:

 {\rm (i$_+$)} $\rank X([l+1,n],[1,l])=1$;

{\rm (i$_-$)} $\rank X([1,l],[l+1,n])=1$;

{\rm (ii$_+$)} $\rank X([1,n],[1,l])>1$ implies $X([l+1,n],[1,l-1])=0$;

{\rm (ii$_-$)} $\rank X([1,l],[1,n])>1$ implies $X([1,l-1],[l+1,n])=0$.
\label{rank-one}
\ep

\begin{proof} Let $X\in G^{u,v}$. Note that for any $p, r \in [1,n]$,  the rank of the submatrix 
$X([r,n],[1,p])$ does not change under right and left multiplication
of $X$ by elements of $\B_+$. Since $G^{u,v}\subset \B_+u\B_+$, this
means that we only need to check conditions (i$_+$) and (ii$_+$) for the permutation matrix $\tilde u$, 
for which it is clearly true in view of 
Lemma \ref{u-matrix}. Similarly, conditions (i$_-$) and (ii$_-$) reduce to considering $\tilde v$.

On the other hand, let $X$ satisfy condition (i$_+$) for any $l\in [1,n-1]$ and let $i_1^-$ be the largest
index such that $x_{i^-_1 1} \ne 0$. Condition (i$_+$) for $l=1$ implies $i_1^->1$. Further, condition (i$_+$) for $l=i_1^--1$
implies $X([i_1^-+1,n],[1,i_1^--1])=0$. Similarly, we can define
$i_2^-$ to be the largest index such that $x_{i_2^- i_1^-} \ne 0$ and conclude from condition (i$_+$) for 
$l=i_1^-$ that $i_2^- > i_1^-$,  and from condition (i$_+$) for $l=i_2^--1$ that $X([i_2^-+1,n],[i_1^-,i_2^--1])=0$.
Continuing in this manner, we construct a sequence
 $I_-=\{1=i^-_0 < i^-_1 < ...< i^-_{k^-}=n\}$ such that 
 \begin{equation}\label{lsp}
 x_{i^-_{s} i^-_{s-1}} \ne 0, \quad X([i_s^-+1,n],[i_{s-1}^-,i_s^--1])=0
 \end{equation}
 for $s \in [1, k^--1]$.  
 Multiplying $X$ on the right and on the left
 by appropriate elements of $\B_+$, we can reduce it to a matrix $X'=(x'_{ij})$ 
 satisfying~\eqref{lsp} and  such
 that  $x'_{i  i^-_{s-1}}= \delta_{i  i^-_{s}}$,  $x'_{i^-_{s} j}= \delta_{ i^-_{s-1} j }$. Moreover,  condition (ii$_+$) 
 imply that $x'_{ij} = 0$ for $i_{s-1} < j < i < i_s$. To summarize, the lower triangular part of $X'$ is 
 $e_{i^-_1 1} + e_{i^-_2 i^-_1} + \ldots + e_{i^-_{k^--1}  n}$.
 
 Now, let $s$ be the smallest index such that
 $i^-_s > s+1$. Then $i^-_1=2, \ldots, i^-_{s-1} = s$. 
 Consider the $(s+1)$st column of $X'$. Entries $x'_{i s+1}$ are zero for $i> i^-_s$ due to~\eqref{lsp}
 and for $1< i \leq i^-_s$ due to the properties of $X'$ described above. Since $X'$ is invertible, this means that 
 $x_{1 s+1}\ne 0$. Then the right multiplication by an invertible upper triangular matrix reduces
 $X'$ to a matrix $X''$ such that $x''_{1, s+1}=1$  and the rest of the first row entries are equal to zero, while 
 the lower triangular part of $X''$ and the entries in the strictly upper triangular part made 0 by previous reductions 
 are left unchanged. Comparing with Lemma \ref{u-matrix}, we see that the lower triangular part and the first $s$ rows 
 of $X''$ coincide
 with those of a permutation matrix corresponding to some Coxeter element $v$ of $S_n$. Continuing in the same fashion, 
 we can eventually reduce $X''$ through right multiplication by upper triangular matrices
 to the permutation matrix $\tilde u$, thus showing that $X \in \B_+ u \B_+$.  
 
 The same argument can be used to show that $X \in \B_- v \B_-$ for some Coxeter element $v$, based on conditions 
 (i$_-$) and (ii$_-$). This completes the proof.
 \end{proof}

\section{Inverse problem} 

\subsection{} 
In this section we show how an element $X$ of a Coxeter double Bruhat cell $G^{u,v}$ that admits factorization
(\ref{factorI}) can be restored from its Weyl function (\ref{weyl}) up to a conjugation by a diagonal matrix.

Recall various  useful representations
for the Weyl function $m(\lambda;X)$:
\begin{equation}
m(\lambda;X)=((\lambda\one-X)^{-1} e_1, e_1)=\frac{q(\lambda)}{p(\lambda)}= \sum_{j=0}^\infty \frac {h_j(X)} {\lambda^{j+1}}.
\label{weyl1}
\end{equation}
Here $e_i$ denotes the vector $\left ( \delta_{i\alpha}\right )_{\alpha=1}^n$ of the standard basis in $\mathbb{C}^n$,  
$(\ \cdot, \cdot )$ is the standard inner product,  
 $p(\lambda)$ is the characteristic polynomial of $X$, $q(\lambda)$ is the characteristic polynomial of the $(n-1)\times(n-1)$ submatrix of $X$ formed by deleting the first row and column, and
\begin{equation*}
h_j(X) = (X^j)_{11}= \ (X^{j} e_1,e_1), \quad j\in \mathbb Z,
\end{equation*}
is the {\em $j$th moment\/} of $X$. (Only moments with nonnegative  indices are present in (\ref{weyl1}), however, $h_j(X)$ for $j<0$, that we will need below, are also well-defined, since $X$ is invertible.) In what follows, when it does not lead to a confusion, we occasionally omit the argument and write
$h_j$ instead of $h_j(X)$.

To solve the inverse problem, we generalize the approach of \cite{FayGekh3}, where
only the cases of symmetric or Hessenberg $X$ were treated. The main idea stems from the classical moments problem \cite{akh}:  one considers the space
$\mathbb{C}[\lambda, \lambda^{-1}] /\det (\lambda - X) $ equipped with
the so-called {\em moment functional} - a bi-linear functional $\langle\ , \ \rangle$ on Laurent polynomials in one variable, uniquely defined by the property
 \be
 \label{momfun}
 \langle \lambda^i, \lambda^j \rangle = h_{i+j}.
 \ee
$X$ is then realized as a matrix of the  operator of multiplication by $\lambda$ relative to  appropriately 
selected  bases $ \{p_i^+(\lambda)\}_{i=0}^{n-1}$, $\{p_i^-(\lambda)\}_{i=0}^{n-1}$ bi-orthogonal with respect to 
the moment functional:
$$
\langle p_i^-(\lambda), p_j^+(\lambda) \rangle = \delta_{ij}. 
$$
For example, the classical tridiagonal case corresponds to the orthogonalization of the sequence $1, \lambda, \ldots, \lambda^{n-1}$.  Elements of $G^{s_{n-1}\cdots s_1, s_{n-1}\cdots s_1}$ (cf. Remark~\ref{cmvao}(iii)) result from the bi-orthogonalization
of sequences $1, \lambda, \ldots, \lambda^{n-1}$ and $\lambda^{-1}, \ldots, \lambda^{1-n}$, while 
CMV matrices (Remark~\ref{cmvao}(iv)) correspond to the bi-orthogonalization
of sequences $1, \lambda, \lambda^{-1}, \lambda^{2}, \ldots $ and   $1, \lambda^{-1}, \lambda, \lambda^{-2}, \ldots$.

For any $l\in \Z$, $i\in \mathbb N$ define 
Hankel matrices
$$
\H^{(l)}_i=(h_{\alpha +\beta + l - i - 1})_{\alpha,\beta=1}^i,
$$
and 
Hankel determinants
\begin{equation}
\Delta^{(l)}_i=\det \H^{(l)}_i;
\label{det}
\end{equation}
we assume that $\Delta^l_0=1$ for any $l\in \mathbb Z$.

\br\label{zerotoep} {\rm 
(i) Let $X$ be an $n\times n $ matrix, then it follows
from the Cayley-Hamilton theorem that, for $i > n$, the columns of $\H^{(l)}_i$ are linearly dependent and so $\Delta^{(l)}_i=0$.

(ii) In what follows we will frequently use the identity
\be
\Delta_{i+1}^{(l)}\Delta_{i-1}^{(l)} = \Delta_{i}^{(l-1)}\Delta_{i}^{(l+1)} -   \left( \Delta_{i}^{(l)}\right )^2,
\label{HankJac}
\ee
which is a particular case of Jacobi's determinantal identity. In particular, for $i=n$, (\ref{HankJac}) and the first part of the Remark imply 
$\Delta_{n}^{(l-1)}\Delta_{n}^{(l+1)} = \left( \Delta_{n}^{(l)}\right )^2$ for any $l$.
}
\er

The main result of this Section is 

\bt
 \label{invthm} If $X\in G^{u,v}$ admits factorization~{\rm\eqref{factorI}}, then
\be\label{cd}
\begin{split}
d_i &=\frac{\Delta_i^{(\varkappa_i + 1)}\Delta_{i-1}^{(\varkappa_{i-1})}}{\Delta_i^{(\varkappa_i)}\Delta_{i-1}^{(\varkappa_{i-1} + 1)}},\\ 
c_i^+c_i^- &= \frac{\Delta_{i-1}^{(\varkappa_{i-1})}\Delta_{i+1}^{(\varkappa_{i+1})}}{\left (\Delta_i^{(\varkappa_{i}+1)}\right )^2}
\left ( \frac{\Delta_{i+1}^{(\varkappa_{i+1} + 1)}}{\Delta_{i+1}^{(\varkappa_{i+1})}}  \right )^{\varepsilon_{i+1}} 
\left ( \frac{\Delta_{i-1}^{(\varkappa_{i-1} + 1)}}{\Delta_{i-1}^{(\varkappa_{i-1})}}  \right )^{2-\varepsilon_{i}}
\end{split}
\ee
for any $i\in [1,n]$.
\et

\br{\rm
\label{invrmrk}
 Formulae (\ref{cd}) allow us to restore an element $X\in G^{u,v}$ from its Weyl
function $m(\lambda; X)$ only modulo the diagonal conjugation. Indeed, it is
clear from (\ref{weyl}) that $m(\lambda; X)=m(\lambda;  T X T^{-1})$ for any
invertible diagonal matrix $T=\mbox{diag}(t_1, \ldots, t_n)$. On the other hand,
under the action $X \mapsto T X T^{-1}$, factorization parameters $d_i, c^\pm_i$ in 
(\ref{factorI}) are transformed as follows: $ d_i \mapsto d_i$,  $c^\pm_i \mapsto (t_i/t_{i+1})^{\pm 1} c^\pm_i$,
thus leaving the left-hand sides in (\ref{cd}) unchanged.
}
\er

\subsection{} The rest of the section is devoted to the proof of Theorem \ref{invthm}. The proof relies
on properties of polynomials of the form
\begin{equation}
\P^{(l)}_i(\lambda)=\det \left [\begin{array}{cccc} 
h_{l-i+1} &h_{l-i+2} &\cdots & h_{l+1}
\\ \cdots &\cdots &\cdots &\cdots
\\
h_l & h_{l+1} & \cdots &h_{l+i} 
\\ 1 & \lambda
&\cdots & \lambda^{i}
\end{array}\right ].
\label{polytoep}
\end{equation}

To prove  the first equality in (\ref{cd}) we need two auxiliary lemmas.

\bl Let $m\in [1,n-1]$ and $X_m$ be the $m\times m$ submatrix of $X\in G^{u,v}$ obtained by deleting $n-m$ last rows and columns. 
Then
\begin{equation}\label{submomrange}
h_\alpha (X_m)= h_\alpha(X)
 \end{equation}
for $\alpha\in [\varkappa_m  - m +1, \varkappa_m  + m]$. 
\label{submoment}
\el

\begin{proof} It is enough to prove the claim for $X\in G^{u,v}$ that admits 
factorization (\ref{factorI}). It is clear that $X_m$ does not depend on
parameters $c^\pm_m,\ldots, c^\pm_{n-1}$, $d_{m+1},\ldots, d_n$. Moreover, 
$X_m\in G^{u_m,v_m}$, where $u_m$ and $v_m$ are obtained from $u$ and $v$, respectively, 
by deleting all transpositions $s_i$ with $i\ge m$. Consequently, 
the network
$N_{u_m,v_m}$ can be obtained from the network $N_{u,v}$ 
by deleting all the edges above the horizontal line joining the $m$th source
 with the $m$th sink. Note also that if $\alpha > 0$ then $h_\alpha(X)$ is the sum
 of path weights over all paths from the first source to the first sink in the network obtained
 by the concatenation of $\alpha$ copies $N_{u,v}$. Thus,
 $h_\alpha (X_m)= h_\alpha(X)$ as long as none of the paths involved reaches above the $m$th horizontal
 level. The smallest positive power of $X$ such that in the corresponding network there is a path
 joining  the first source to the first sink  and reaching above the $m$th horizontal
 level is $r=r^+ + r^-$, where $r^\pm=\min\{j: i_j^\pm \geq m+1\}$. By (\ref{k+/-}), 
 $r^\pm= k_m^\pm +1$. Therefore,
\eqref{submomrange} holds for $\alpha\in [0, k_m^+ + k_m^- +1 ]$. By Lemma~\ref{allcomb}(iii),~\eqref{epsum} and~\eqref{kappa}, 
the latter interval coincides with $[0,\varkappa_m+m]$.
 
Next, consider the network $\bar N_{u^{-1},v^{-1}}$ that represents $X^{-1}$ corresponding to
factorization (\ref{factorinv}). Note that this network differs from $N_{u^{-1},v^{-1}}$. In
particular, in $\bar N$ all ``north-east'' edges are to the left of any ``south-east'' edge.
Once again,  the network $\bar N_{u_m^{-1},v_m^{-1}}$ is obtained from the network $\bar N_{u^{-1},v^{-1}}$  by deleting all the edges above the horizontal line joining the $m$th sink
 with the $m$th source. The smallest positive power of $X^{-1}$ such that in the corresponding network obtained by concatenation of copies of $\bar N_{u^{-1},v^{-1}}$ there is a path
 joining  the first source to the first sink  and reaching above the $m$th horizontal
 level is $\bar r = \bar r^++\bar r^--1$, where $\bar r^+=\min\{j: l_j^+ \geq m+1\}$ and $\bar r^-=\min\{j: l_j^- \geq m+1\}$.
The difference in the formulas for $r$ and $\bar r$ stems from the difference in the structure of the networks $N$ and $\bar N$: the latter already contains paths from the first source to the first sink that reach above the first horizontal level. Consequently,
it is possible that $\bar r=1$ for some $m>1$, whereas $r>1$ for any $m>1$. 
 
One can define combinatorial parameters $\bar\varepsilon^\pm_i$ and $\bar k^\pm_i$ similarly to~\eqref{eps} and~\eqref{k+/-} based on the sets $L^\pm$ rather than on $I^\pm$ (cp.~\eqref{IL}). It follows immediately from definitions that $\bar\varepsilon^\pm_i=1-\varepsilon^\pm_i$ for
$i\in [2,n-1]$ and $\bar\varepsilon^\pm_1=\varepsilon^\pm_1=1$. One can prove, similarly to Lemma~\ref{allcomb}(iii), that 
$\bar k^\pm_i=i-\sum_{\beta=1}^i\bar\varepsilon^\pm_\beta$, which translates to $\bar k^\pm_i=\sum_{\beta=1}^i\varepsilon^\pm_\beta-1$.
Since $\bar r^\pm=\bar k^\pm_m+1$, we get $\bar r=\sum_{\beta=1}^m\varepsilon_\beta-1$, and hence, by~\eqref{kappa}, $\bar r=m-\varkappa_m$. If $\bar r=1$, then $\varkappa_m-m+1=0$, and the interval $[0,\varkappa_m+m]$ coincides with $[\varkappa_m-m+1,\varkappa_m+m]$. Otherwise we can concatenate up to $\bar r-1=m-1-\varkappa_m$ networks $\bar N_{u^{-1},v^{-1}}$,
 and hence \eqref{submomrange} holds additionally for $\alpha\in [\varkappa_m-m+1,-1]$.
\end{proof}

\bl\label{charpoly}
Let $m\in [1,n-1]$, then
\begin{equation}
\det (\lambda - X_m) = \frac{1}{\Delta_m^{(\varkappa_m)}} \P^{( \varkappa_m )}_{m}(\lambda).
\label{charpolydet}
\end{equation}
In particular,
\begin{equation}
d_1\cdots d_m = \frac{\Delta_m^{(\varkappa_m + 1)}}{\Delta_m^{(\varkappa_m)}}.
\label{d1m}
\end{equation}
\el

\begin{proof} Let $\det (\lambda - X_m) = \lambda^m
+\sum_{i=0}^{m-1} a_{mi} \lambda^i$. Then the Hamilton-Cayley
theorem implies
\begin{equation*}
h_{\alpha+m} (X_m) + \sum_{i=0}^{m-1} a_{mi} h_{\alpha+i}(X_m)=0
\end{equation*}
for any $\alpha\in\mathbb{Z}$.
By Lemma \ref{submoment}, this relation remains valid if we replace
$h_{\alpha+i}(X_m)$ with $h_{\alpha+i}=h_{\alpha+i}(X)$ for
$i=0,\ldots, m$, as long as $\varkappa_m - m +1 \leq \alpha \leq \varkappa_m$. 
This
means that, after the right multiplication of the matrix used in
the definition (\ref{polytoep}) of $\P^{(\varkappa_m )}_{m}$ by the unipotent matrix
$\one + \sum_{\beta=0}^{m-1} a_{m\beta}
e_{\beta+1,m+1}$, one gets a matrix of the form
$$
\left (\begin{array}{cc} \H^{(\varkappa_m )}_{m} & 0\\
1\ \lambda\ \cdots\ \lambda^{m-1} & \det (\lambda - X_m)\end{array}\right ),
$$
and (\ref{charpolydet}) follows. Since $\det X_m = d_1 \cdots d_m$, (\ref{d1m}) drops out immediately from (\ref{charpolydet}) and 
\eqref{polytoep} after substitution $\lambda=0$.
\end{proof} 

\br\label{detiden} {\rm 
Combining Remark~\ref{zerotoep}(ii) with
(\ref{d1m}) for $m=n$ and taking into account that $\det X=d_1\cdots d_n$, we see that  for any $l$
\[
\frac{\Delta_{n}^{(l+1)}}{\Delta_{n}^{(l)}} = \det X,
\]
which implies that for any $l$
\be
\label{shift}
\Delta_{n}^{(l)} = \Delta_{n}^{(n-1)} \det X^{l+1-n}.
\ee
}
\er

 Now, the first formula in (\ref{cd}) is an easy consequence of (\ref{d1m}). To be in  a position 
 to prove the second formula in (\ref{cd}), we first need the following statement. For any $i\in [1,n]$ define subspaces
 $$
 \L^+_i=\Span \{e^T_1,\dots,e^T_i\},\qquad \L^-_i=\Span \{e_1,\dots, e_i\}.
 $$
 Besides,  put
 \begin{equation}
{\displaystyle \gamma^\pm_{i} 
= (-1)^{(i-1)\varepsilon^\pm_i}d_i^{-\varepsilon^\pm_i} \prod_{j=1}^{i-1}
c_j^\pm d_j^{\bar\varepsilon^\pm_j-\varepsilon^\pm_i},\quad i\in [2,n],} \label{gamma}
\end{equation}
where $\bar\varepsilon^\pm_j$ are defined in the proof of Lemma~\ref{submoment}, and $\gamma^\pm_1=1$.
 
 \begin{lemma}
\label{flaglemma} For any $i\in [1, n]$ one has
\begin{equation}
\gamma^+_{i} e^T_{i} = e^T_1 X^{\zeta^+_i} \mod \L^+_{i-1}  \label{flag1}
\end{equation}
and
\begin{equation}
\gamma^-_{i} e_{i} =  X^{\zeta^-_i} e_1 \mod \L^-_{i-1}.    \label{flag2}
\end{equation}
In particular,
$$
\L^+_i=\Span\{e^T_1 X^{\zeta^+_1},\dots,e^T_1 X^{\zeta^+_i}\}, \qquad
\L^-_i=\Span\{ X^{\zeta^-_1}e_1,\dots, X^{\zeta^-_i}e_1\}.
$$
\label{lemmaflag}
\end{lemma}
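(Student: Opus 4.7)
The proof proceeds by induction on $i$, handling~\eqref{flag1} and~\eqref{flag2} simultaneously; since the two statements are related by transposing everything and exchanging the roles of $u$ and $v$ (so that $I^+$ and $I^-$ switch, and~\eqref{factorinv} plays the role of~\eqref{factorI}), I will sketch the argument for~\eqref{flag1} only. The base case $i=1$ is immediate from $\zeta^\pm_1=0$ and $\gamma^\pm_1=1$. For the inductive step the strategy is to find an index $i'<i$ with $\zeta^+_i-\zeta^+_{i'}\in\{+1,-1\}$, write $e^T_1 X^{\zeta^+_i}=(e^T_1 X^{\zeta^+_{i'}})\,X^{\pm 1}$, apply the inductive hypothesis, and analyze a single multiplication by $X$ (resp.\ $X^{-1}$) applied to $e^T_{i'}$ using the explicit factorization~\eqref{factorI} (resp.~\eqref{factorinv}).

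If $\varepsilon^+_i=0$, i.e.\ $i=i^+_j$, one takes $i'=i^+_{j-1}$ so that $\zeta^+_{i'}=j-1$. Substituting~\eqref{factorI} into $e^T_{i^+_{j-1}}X$ and exploiting the staircase support of the $C^\pm_s$: the left factors $(\one-C^-_s)^{-1}$ leave $e^T_{i^+_{j-1}}$ unchanged modulo $\L^+_{i^+_{j-1}-1}$ (since $X_-$ is unit lower triangular); $D$ introduces the scalar $d_{i^+_{j-1}}$; and among the right factors only $(\one-C^+_j)^{-1}$ acts non-trivially on $e^T_{i^+_{j-1}}$, generating the chain $e^T_{i^+_{j-1}}+c^+_{i^+_{j-1}} e^T_{i^+_{j-1}+1}+\cdots+c^+_{i^+_{j-1}}\cdots c^+_{i^+_j-1}\,e^T_{i^+_j}$. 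A parallel argument shows $\L^+_{i^+_{j-1}-1}\cdot X\subseteq\L^+_{i-1}$. Thus the $e^T_i$-coefficient of $e^T_1X^j$ modulo $\L^+_{i-1}$ equals $\gamma^+_{i^+_{j-1}}\,d_{i^+_{j-1}}\,c^+_{i^+_{j-1}}\cdots c^+_{i^+_j-1}$, which matches $\gamma^+_{i^+_j}$ from~\eqref{gamma} after the identity $\prod_{s=i^+_{j-1}}^{i^+_j-1} d_s^{\bar\varepsilon^+_s}=d_{i^+_{j-1}}$ (use $\bar\varepsilon^+_{i^+_{j-1}}=1$ and $\bar\varepsilon^+_s=0$ for $s\in(i^+_{j-1},i^+_j)$).

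If $\varepsilon^+_i=1$, take $i'$ to be the largest index $<i$ with $\varepsilon^+_{i'}=1$; then $i'$ and $i$ are consecutive elements of $L^+$, $\varepsilon^+_\beta=0$ for every $\beta\in(i',i)$, and $\zeta^+_i=\zeta^+_{i'}-1$. The analogous tracking in~\eqref{factorinv} uses that only $(\one+\bar C^+_{p+1})^{-1}$ (where $i'=l^+_p$) acts non-trivially on $e^T_{i'}$, producing the sign-alternating chain ending in $(-1)^{i-i'}c^+_{i'}\cdots c^+_{i-1}\,e^T_i$; next $D^{-1}$ contributes $d_i^{-1}$; and the subsequent $\bar C^-$-factors leave $e^T_i$ unchanged modulo $\L^+_{i-1}$ (since $e^T_i\bar C^-_s$ lands in $\L^+_{i-1}$). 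Combined with $\gamma^+_{i'}$, the resulting leading coefficient $(-1)^{i-i'}d_i^{-1}\gamma^+_{i'}\prod_{s=i'}^{i-1}c^+_s$ agrees with $\gamma^+_i$ as given by~\eqref{gamma}: its $(-1)^{i-1}d_i^{-1}$-prefactor appears precisely when $\varepsilon^+_i=1$, and the $d_j^{\bar\varepsilon^+_j-\varepsilon^+_i}$-factors telescope across $[i',i]$ because $\bar\varepsilon^+_{i'}=0$ and $\bar\varepsilon^+_\beta=1$ for $\beta\in(i',i)$. The concluding spanning assertion then follows by induction, since each $e^T_1X^{\zeta^+_\alpha}$ sits in $\L^+_\alpha$ with non-vanishing $e^T_\alpha$-coefficient $\gamma^+_\alpha$, giving a triangular basis of $\L^+_i$. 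The principal obstacle is the sign- and diagonal-bookkeeping in Case B, where the expansion $(\one+\bar C^\pm_s)^{-1}=\sum_{k\ge 0}(-\bar C^\pm_s)^k$ and the interposed $D^{-1}$ must combine to reproduce exactly the prefactor and $d_j$-telescoping dictated by~\eqref{gamma}.
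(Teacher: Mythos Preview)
Your proof is correct and close in spirit to the paper's, though organized differently. The paper does not induct on $i$: it isolates the upper-triangular factor $V=D(\one-C^+_{k^+})^{-1}\cdots(\one-C^+_1)^{-1}$, computes $e_1^TV^j$ and $e_1^TV^{-\alpha}$ directly by iterating the action of $V^{\pm1}$ on $e_1^T$, and only at the end converts $V$ to $X$ via the observation $e^T_{i^+_{j-1}}XV^{-1}\equiv e^T_{i^+_{j-1}}\pmod{\L^+_{i^+_{j-1}}}$ (and the analogous statement for negative powers). Your approach instead runs an induction on $i$, multiplies by the full $X$ or $X^{-1}$ at each step using~\eqref{factorI} or~\eqref{factorinv}, and therefore must verify at each step that both the lower-triangular piece and the ``error'' subspace $\L^+_{i'-1}$ stay within $\L^+_{i-1}$ after multiplication. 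The paper's route buys a cleaner separation of the upper- and lower-triangular contributions (only $V$ matters until the very end), while your route is arguably more transparent as a pure induction and avoids the auxiliary replacement of $V$ by $X$. Either way the core computation---tracking how $(\one\mp C^+_s)^{-1}$ pushes $e^T_m$ along the staircase to the next element of $I^+$ or $L^+$---is identical, and your bookkeeping of the $\gamma^+_i$ coefficient (including the $i'=1$ boundary case, where $\bar\varepsilon^+_1=1$) checks out.
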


 \begin{proof} A proof for~\eqref{flag1} was given in \cite{FayGekh3}. We present it here in order to keep the paper self-contained. The case of~\eqref{flag2} can be treated similarly.
 
For any $X$ given by (\ref{factorI}),  
consider
an upper triangular matrix
$$
V=D (\one - C^+_{k^+})^{-1} (\one - C^+_{k^+-1})^{-1}\cdots (\one -
C^+_1)^{-1}.
$$
Note that $V$ is the upper triangular factor in
the Gauss factorization of $X$.

By (\ref{Cj}), $e_r^T C^+_j=0$ for $r< i^+_{j-1}$ and $r\geq i^+_j$, and hence
$$
e_r^T(1-C^+_j)^{-1}=  
\left \{ \begin{array}{cc}
e_r^T, &\quad r<i_{j-1}^+,\\
 e_r^T \mod \L^+_{r-1}, &\quad r \ge i_j^+.
\end{array}\right.
$$
Thus, for $j\in [1,k^+]$,
\begin{align*}
e_{i^+_{j-1}}^T V&=d_{i^+_{j-1}}e_{i^+_{j-1}}^T (\one - C^+_{k^+})^{-1} \cdots
(\one - C^+_1)^{-1}\\
&=d_{i^+_{j-1}}e_{i^+_{j-1}}^T (\one - C^+_j)^{-1} \mod \L^+_{i^+_j-1}\\
 &= d_{i^+_{j-1}}c^+_{i^+_{j-1}}\cdots c^+_{i^+_{j}-1}e^T_{i^+_{j}}\
\mod \L^+_{i^+_j-1}.
\end{align*}
A similar argument shows that $ e_r^T V \in  \L^+_{i^+_j-1}$ for $r <
i^+_{j-1}$. This implies
\begin{align*} 
e_{1}^T V^j&=
\left
(\prod_{\beta=0}^{j-1} d_{i^+_{\beta}}c^+_{i^+_{\beta}}\cdots
c^+_{i^+_{\beta+1}-1}\right ) e_{i^+_{j}}^T \mod \L^+_{i^+_j-1}\\ &
= \left (\prod_{r=1}^{i^+_j-1} c^+_r
d^{\bar\varepsilon_r^+}_r \right ) e^T_{i^+_{j}} \mod
\L^+_{i^+_j-1}. 
\end{align*}
Besides, $e^T_{i^+_{j-1}}XV^{-1}=e^T_{i^+_{j-1}}\mod \L^+_{i^+_{j-1}}$, hence the above relation can be re-written as
\begin{equation}\label{aux2}
e_{1}^T X^j=\left (\prod_{r=1}^{i^+_j-1} c^+_r
d^{\bar\varepsilon_r^+}_r \right ) e^T_{i^+_{j}} \mod
\L^+_{i^+_j-1}. 
\end{equation}

On the other hand,  for $l\in [i^+_{j-1},i^+_j-1]$, define
$m\ge 0$ 
so that 
$l+m+1$ is the smallest index greater than $l$ that 
belongs to the index set $L^+$. Then
\begin{align*}
e_l^T V^{-1}&= e_l^T(\one - C^+_j)\cdots(\one -
C^+_{k^+})D^{-1} \mod \L^+_{l+m} \\
&=
\left ((-1)^{m+1} c^+_l\cdots c^+_{l+m} d_{l+m+1}^{-1}\right )
e_{l+m+1}^T \mod \L^+_{l+m}. 
\end{align*}
The latter equality implies
\begin{equation}
e_1^T V^{-\alpha} 
= \left (
(-1)^{l^+_\alpha-1} c^+_1\cdots c^+_{l^+_\alpha-1} d^{-1}_{l^+_1}\cdots
d^{-1}_{l^+_\alpha}\right ) e_{l^+_\alpha}^T \mod
\L^+_{l^+_\alpha-1} \label{aux4}
\end{equation}
for any $l^+_\alpha\in L^+$ distinct from $1$ and $n$.
Note now that $l_\alpha=i$ if and only if
$\alpha=\sum_{\beta=1}^{i-1} \varepsilon^+_i$ (this can be considered as an analog or~\eqref{twocount}). Furthermore,
$d^{-1}_{l^+_1}\cdots d^{-1}_{l^+_\alpha}= \prod_{j=2}^i
d_j^{-\varepsilon_j^+}$. Thus, one can re-write (\ref{aux4}) as
$$
e_1^T V^{-\sum_{j=1}^{i-1} \varepsilon^+_i}=(-1)^{i-1}
\prod_{j=1}^{i-1} c^+_j d_{j+1}^{-\varepsilon^+_{j+1}}
e_i^T \mod \L_{i-1}^+.
$$
Together with $e^T_lVX^{-1}= e^T_l\mod \L^+_{l+m}$ this leads to
$$
e_1^T X^{-\sum_{j=1}^{i-1} \varepsilon^+_i}=(-1)^{i-1}
\prod_{j=1}^{i-1} c^+_j d_{j+1}^{-\varepsilon^+_{j+1}}
e_i^T \mod \L_{i-1}^+.
$$
Combining this relation with (\ref{aux2}) and Lemma~\ref{allcomb}(ii), one
gets~\eqref{flag1}.
 \end{proof}

\begin{examp}{\rm  We illustrate (\ref{flag1}) using Example~\ref{runex} and Fig.~\ref{factorex}. If $j>0$ then to find $i$
such that $e_1^T X^j = \gamma^+_i e^T_i \mod \L^+_{i-1}$ it is enough to find the highest sink
that can be reached by a path starting from the source 1 in the network obtained by concatenation of
$j$ copies of $N_{u,v}$. Thus, we conclude from Fig.~\ref{factorex}, that 
\begin{align*}
e_1^T X &= d_1 c_1^+ c_2^+ e^T_3 \mod \L^+_{2},\quad e_1^T X^2 = d_1 c_1^+ c_2^+ d_3 c_3^+ e^T_4 \mod \L^+_{3},\\ 
e_1^T X^3 &= d_1 c_1^+ c_2^+ d_3 c_3^+ d_4 c_4^+ e^T_5 \mod \L^+_{4}. 
\end{align*}
Similarly, using the network  $\bar N_{u^{-1},v^{-1}}$ shown in Fig.~\ref{factorinvex}, one observes
that $e_1^T X^{-1} = -c_1^+ d_2^{-1}  e^T_2 \mod \L^+_{1}$. These relations are in agreement
with (\ref{flag1}).

\begin{figure}[ht]
\begin{center}
\includegraphics[height=4.0cm]{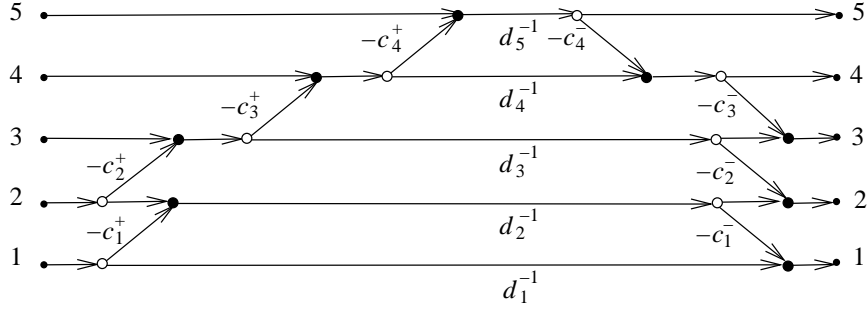}
\caption{Network $\bar N_{u^{-1},v^{-1}}$ for the double Bruhat cell $G^{u,v}$  from Example~\ref{runex}}
\label{factorinvex}
\end{center}
\end{figure}
}
\end{examp}

Define Laurent polynomials 
 $$
 p_{i}^\pm(\lambda) 
 = \frac{(-1)^{(i-1)\varepsilon_i^\pm}}{\gamma_{i}^\pm\Delta_{i-1}^{(\varkappa_{i-1})}}\lambda^{k^\pm_i - i + 1} \P^{(\varkappa_{i-1}-\varepsilon_i^\pm)}_{i-1}(\lambda),\quad i\in [1,n].
 $$

\begin{coroll} 
 \label{polycorr}
 {\rm (i)} One has
 $$ 
 e^T_1 p_{i}^+(X)= e^T_{i},\quad p_{i}^-(X)e_1= e_{i},\quad i\in [1,n].
 $$
 
 {\rm (ii)} 
 For any eigenvalue $\lambda$ of $X$, the column-vector
$(p_i^+(\lambda))_{i=1}^n$ and the row-vector $(p_i^-(\lambda))_{i=1}^n$ are,
respectively, right and left eigenvectors of $X$ corresponding to $\lambda$.
 \end{coroll}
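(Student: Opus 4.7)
The plan is to prove part~(i) for $p_i^+$; the argument for $p_i^-$ is completely symmetric (interchanging the roles of the two flags $\L^\pm$), and part~(ii) will follow from part~(i) by a short direct manipulation.

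For part~(i), the approach is to use the moment--functional identity $\langle f(\lambda),\lambda^k\rangle = e_1^T f(X) X^k e_1$ in combination with Lemma~\ref{flaglemma}. The first observation is that $p_i^+(\lambda)$ is a Laurent polynomial whose support is the interval $M_i^+=[k_i^+-i+1,k_i^+]$, which by Lemma~\ref{allcomb}(iv) equals $\{\zeta^+_\alpha:\alpha=1,\dots,i\}$; Lemma~\ref{flaglemma}(ii) therefore gives $e_1^T p_i^+(X)\in\L_i^+=\Span\{e_1^T,\dots,e_i^T\}$. To sharpen this to $e_1^T p_i^+(X)=e_i^T$, I will first show that this row vector annihilates $\L_{i-1}^-=\Span\{e_1,\dots,e_{i-1}\}$ and then pin down the resulting scalar.

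The annihilation step, which I expect to be the main technical obstacle because of the indexing bookkeeping, proceeds as follows. Expanding the determinant~\eqref{polytoep} along its last row and pairing with the moment functional yields the orthogonality $\langle P_{i-1}^{(l)}(\lambda),\lambda^k\rangle = 0$ for $k\in [l-i+2,l]$ (the resulting determinant has a repeated row). Applied with $l=\varkappa_{i-1}-\varepsilon_i^+$ and combined with the prefactor $\lambda^{k_i^+-i+1}$ from the definition of $p_i^+$, this converts to the vanishing of $\langle p_i^+,\lambda^k\rangle$ on the range $[l-k_i^++1,\,l+i-k_i^+-1]$. A direct calculation using Lemma~\ref{allcomb}(iii) and the identity $\varepsilon_\beta=\varepsilon^+_\beta+\varepsilon^-_\beta$ shows that this range is precisely $M_{i-1}^-=[k_{i-1}^--i+2,k_{i-1}^-]$. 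Since Lemma~\ref{flaglemma}(ii) identifies $\L_{i-1}^-$ with $\Span\{X^{\zeta_\alpha^-}e_1:\alpha=1,\dots,i-1\}$, it follows that $e_1^T p_i^+(X)$ annihilates $\L_{i-1}^-$, and combined with membership in $\L_i^+$ we conclude $e_1^T p_i^+(X) = b\,e_i^T$ for some scalar $b$. The scalar $b$ is identified by reading off the coefficient of the extremal monomial $\lambda^{\zeta_i^+}$ in $p_i^+(\lambda)$: this is the leading coefficient or the constant term of $P_{i-1}^{(l)}$ depending on whether $\varepsilon_i^+=0$ or $1$, and in both cases the normalization in the definition of $p_i^+$ makes it equal to $1/\gamma_i^+$; matching with the congruence $e_1^T X^{\zeta_i^+}\equiv \gamma_i^+ e_i^T\pmod{\L_{i-1}^+}$ from Lemma~\ref{flaglemma}(i) (while the other $e_1^T X^{\zeta_\alpha^+}$ with $\alpha<i$ already lie in $\L_{i-1}^+$) forces $b=1$.

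Part~(ii) is then immediate: let $\mu$ be an eigenvalue with nonzero right eigenvector $v$. Since $X$ is invertible, $p_i^+(X)v=p_i^+(\mu)v$ even though $p_i^+$ is a Laurent polynomial, so part~(i) gives $v_i = e_1^T p_i^+(X) v = p_i^+(\mu)\,e_1^T v$. If $e_1^T v$ vanished, every $v_i$ would vanish, contradicting $v\ne 0$; hence the column vector $(p_i^+(\mu))_{i=1}^n = v/(e_1^T v)$ is a nonzero right eigenvector of $X$ for $\mu$. The left eigenvector statement is proved identically using $p_i^-$ and the second identity in part~(i).
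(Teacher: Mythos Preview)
Your proof is correct and uses essentially the same ingredients as the paper's: Lemma~\ref{flaglemma}, the identification of the support of $p_i^+$ with $M_i^+$, and the orthogonality of $\P_{i-1}^{(l)}$ against the moment functional over the appropriate range (which you correctly identify with $M_{i-1}^-$). The only difference is the direction of the argument---the paper starts from $e_i^T$, expresses it via Lemma~\ref{flaglemma} as $e_1^T$ times an unknown Laurent polynomial, and then solves the resulting orthogonality system to identify that polynomial with $p_i^+$; you instead start from the explicit $p_i^+$ and verify directly that $e_1^T p_i^+(X)$ lies in $\L_i^+\cap(\L_{i-1}^-)^\perp=\Span\{e_i^T\}$ with the correct normalization---but this is the same linear-algebra content organized in reverse, and your treatment of part~(ii) is identical to the paper's.
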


\begin{proof} (i) We will only give a proof for $p_{i}^+(\lambda)$.  By Lemma \ref{flaglemma}, $e^T_1X^{\zeta_\alpha^+}$, $\alpha=1,\dots,i-1$, form a basis of $\L^+_{i-1}$, hence, taking into account~\eqref{flag1}, we get
 $$
 \gamma_i^+ e_i^T = e^T_1\left 
 ( X^{\zeta_i^+} +  \sum_{\alpha=1}^{i-1} \pi_{\alpha}X^{\zeta^+_\alpha}
 \right )
 $$ 
 for some coefficients $\pi_\alpha$. By Lemma~\ref{allcomb}(iv), this can be re-written as
 $$
 \gamma_i^+ e_i^T = e^T_1 X^{k^+_i - i + 1} \sum_{\alpha=1}^{i} \tilde \pi_{\alpha}X^{\alpha-1},
 $$ 
 where either $\tilde \pi_i=1$ (if $\varepsilon_i^+=0$), or
 $\tilde \pi_1=1$ (if $\varepsilon_i^+=1$). Define a polynomial $p(\lambda)=\sum_{\alpha=1}^{i} \tilde \pi_{\alpha}\lambda^{\alpha-1}$.
 By Lemma \ref{flaglemma}, vectors $X^\alpha e_1$, $\alpha\in M^-_{i-1}$, span the
 subspace $\L^-_{i-1}$. Therefore, by Lemma~\ref{allcomb}(iv),
 $\left (X^{k^+_i - i + 1}  p(X) X^\alpha e_1, e_1\right ) = 0 $ for $\alpha \in 
 [k_{i-1}^- - i +2, k_{i-1}^-]$. This system of linear equations 
 determines $p(\lambda)$ uniquely as
 $$
 p(\lambda)= \frac{(-1)^{(i-1)\varepsilon_i^+}}{\Delta_{i-1}^{(k_i^+ + k_{i-1}^- -i +1 +\varepsilon_i^+)}}
 \P^{(k_i^+ + k_{i-1}^- -i +1)}_{i-1}(\lambda), 
 $$ 
 which by~\eqref{kappa} and Lemma~\ref{allcomb}(iii) gives
 $$
p(\lambda)= \frac{(-1)^{(i-1)\varepsilon_i^+}}{\Delta_{i-1}^{(\varkappa_{i-1})}} \P^{(\varkappa_{i-1}-\varepsilon_i^+)}_{i-1}(\lambda)=\gamma^+_i\lambda^{-k^+_i + i - 1}p^+_i(\lambda).
$$ 
It remains to notice that
  $\gamma_i^+ e_i^T =  e_1^T X^{k^+_i - i + 1}  p(X)$, and 
  the result follows.

(ii) Let $z^\lambda = (z^\lambda_i)_{i=1}^n$ be a right eigenvector
of $X$ corresponding to an eigenvalue $\lambda$. Then
$
e_1^T p_i^+(X) z^\lambda = e_i^T z^\lambda,
$
which means that $z^\lambda_i =p_i^+(\lambda)  e_1^T  z^\lambda =
p_i^+(\lambda)  z_1^\lambda$.
Therefore, $z_1^\lambda\ne 0$, $p_i^+(\lambda) =
\frac{z^\lambda_i}{z^\lambda_1}$ and
$(p_i^+(\lambda))_{i=1}^n$ is a right eigenvector of $X$. The case of
$(p_i^-(\lambda))_{i=1}^n$
can be treated in the same way.
 \end{proof}

 \br 
 {\rm The statement of Corollary \ref{polycorr}{i} is equivalent to saying that Laurent polynomials 
 $p_i^{\pm}(\lambda)$ form a bi-orthonormal family with respect to  the  moment functional (\ref{momfun}) obtained by the Gram process 
 applied to the sequences $1, \lambda^{\zeta_1^+},  \lambda^{\zeta_2^+},\dots$ and $1, \lambda^{\zeta_1^-}, 
  \lambda^{\zeta_2^-},\dots$. Indeed, 
 $$
 \langle p_i^+(\lambda), p_j^-(\lambda) \rangle = e_1^T p_i^+(X) p_j^-(X) e_1 = (e_i, e_j)=  \delta_{ij}.
 $$
 }
 \er 

 For $l\in [2,n]$ define
\be
\label{Gamma_l}
\Gamma_l= \prod_{i=2}^l d_i^{-\varepsilon_i} \prod_{j=1}^{i-1}
c_j d_j^{\bar\varepsilon_j-\varepsilon_i},
\ee
where $\bar\varepsilon_j=\bar\varepsilon_j^++\bar\varepsilon_j^-$, $j\in [1,n]$, similarly to~\eqref{epsum}.
 
 \begin{coroll}\label{corshift} 
 For any $k\in \Z$,
\be
\label{Deltashift}
\Delta_l^{(\varkappa_{l}+k)} = \Gamma_l\ \left ( X^k \right )_{[1,l]}^{[1,l]}.
\ee
\end{coroll}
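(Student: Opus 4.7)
The plan is to relate the principal $l\times l$ minor of $X^k$ to the Hankel determinant $\Delta_l^{(\varkappa_l + k)}$ via a triangular change of basis furnished by Lemma \ref{flaglemma}. Specifically, by parts (i)--(iii) of Lemma \ref{flaglemma} there exist coefficients $b_{i\alpha}^\pm$ with $b_{ii}^\pm = 1$ such that for each $i \in [1,l]$
\[
\gamma_i^+ e_i^T = \sum_{\alpha=1}^i b_{i\alpha}^+ e_1^T X^{\zeta_\alpha^+}, \qquad
\gamma_i^- e_i = \sum_{\beta=1}^i b_{i\beta}^- X^{\zeta_\beta^-} e_1.
\]
Pairing these through $X^k$ and using $h_j = e_1^T X^j e_1$, the entry $(X^k)_{ij} = e_i^T X^k e_j$ satisfies
\[
\gamma_i^+ \gamma_j^- (X^k)_{ij} = \sum_{\alpha \le i,\, \beta \le j} b_{i\alpha}^+ b_{j\beta}^- h_{\zeta_\alpha^+ + k + \zeta_\beta^-}.
\]
This is a matrix identity of the form $D^+ [(X^k)_{ij}] D^- = B^+ H (B^-)^T$ with $D^\pm = \diag(\gamma_i^\pm)$ and $B^\pm$ unit lower triangular; taking determinants of the $l\times l$ blocks yields
\[
\Bigl(\prod_{i=1}^l \gamma_i^+ \gamma_i^-\Bigr) (X^k)_{[1,l]}^{[1,l]} = \det \bigl(h_{\zeta_\alpha^+ + k + \zeta_\beta^-}\bigr)_{\alpha,\beta=1}^l.
\]

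Next I will permute rows and columns of the right-hand side to arrange both $(\zeta_i^+)$ and $(\zeta_i^-)$ into increasing order. By Lemma \ref{allcomb}(iv), each $M_l^\pm = \{\zeta_i^\pm\}$ is the consecutive interval $[k_l^\pm - l + 1, k_l^\pm]$; moreover, Lemma \ref{allcomb}(iii) together with \eqref{kappa} gives $k_l^+ + k_l^- = 2l - \sum_{\beta=1}^l \varepsilon_\beta = \varkappa_l + l - 1$. The $(i,j)$ entry of the sorted matrix is therefore $h_{\varkappa_l + k + i + j - l - 1}$, so the sorted determinant is exactly $\Delta_l^{(\varkappa_l + k)}$.

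What remains is to show that the product of the two sorting signs cancels against $\prod_i \gamma_i^+ \gamma_i^-$ and leaves precisely $\Gamma_l$; this sign bookkeeping is the main technical obstacle. I will count inversions of $(\zeta_i^\pm)_{i=1}^l$ by splitting pairs $\alpha < \beta$ into four cases according to $(\varepsilon_\alpha^\pm, \varepsilon_\beta^\pm)$. Using Lemma \ref{allcomb}(ii), inversions occur only in cases $(1,1)$ and $(0,1)$, contributing $\binom{|S_1^\pm|}{2}$ and $\sum_{\beta=1}^l (\beta-1)\varepsilon_\beta^\pm - \binom{|S_1^\pm|}{2}$ pairs respectively, where $S_1^\pm = \{i \in [1,l] : \varepsilon_i^\pm = 1\}$. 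These sum to $\sum_{i=2}^l (i-1)\varepsilon_i^\pm$ inversions on each side. Multiplying out \eqref{gamma}, with $c_j = c_j^+ c_j^-$, $\varepsilon_i = \varepsilon_i^+ + \varepsilon_i^-$ and $\bar\varepsilon_j = \bar\varepsilon_j^+ + \bar\varepsilon_j^-$, gives
\[
\prod_{i=1}^l \gamma_i^+ \gamma_i^- = (-1)^{\sum_{i=2}^l (i-1)\varepsilon_i}\, \Gamma_l,
\]
so the two sorting signs exactly cancel the sign above, yielding $\Gamma_l \,(X^k)_{[1,l]}^{[1,l]} = \Delta_l^{(\varkappa_l + k)}$ as claimed.
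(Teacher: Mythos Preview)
Your proof is correct and follows essentially the same approach as the paper's own argument. Both proofs use Lemma~\ref{flaglemma} to rewrite $e_i^T X^k e_j$ in terms of moments $h_{k+\zeta_i^+ + \zeta_j^-}$ via a unit lower-triangular change of basis, then permute rows and columns to identify the resulting Hankel determinant as $\Delta_l^{(\varkappa_l+k)}$, and finally match the permutation sign against the sign in $\prod_i \gamma_i^+\gamma_i^-$ to recover~$\Gamma_l$; your version simply makes each of these steps (the matrices $B^\pm$, the inversion count) more explicit than the paper does.
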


\begin{proof} By Lemma~\ref{flaglemma}, 
\begin{equation*}
\begin{split}
\left ( X^k \right )_{[1,l]}^{[1,l]} = 
\det \left ( e_i^T X^k e_j \right )_{i,j=1}^l &= \det \left ( \frac{1}{\gamma_i^+ \gamma_j^-}e_1^T X^{k+\zeta_i +\zeta_j} e_1 \right )_{i,j=1}^l\\
&= \left(\prod_{i=2}^l \frac{1}{\gamma_i^+ \gamma_i^-}\right)\det \left (e_1^T X^{k+\zeta_i +\zeta_j} e_1 \right )_{i,j=1}^l. 
\end{split}
\end{equation*}
By Lemma \ref{allcomb}(iii),~(iv) and (\ref{kappa}), the determinant in the last expression above is equal, up to a sign, to
$\Delta_l^{(\varkappa_{l}+k)}$.  By Lemma \ref{allcomb}(ii),
the sign is determined as 
$$ 
\prod_{i=2}^l (-1)^{(i-1)\varepsilon_i^+}
\prod_{j=2}^l (-1)^{(j-1)\varepsilon_j^-} = (-1)^{\sum_{j=2}^l (i-1)\varepsilon_i}. 
$$
On the other hand, (\ref{gamma}) implies that $\prod_{i=2}^l \gamma_i^+ \gamma_i^- = (-1)^{\sum_{j=2}^l (i-1)\epsilon_i} \Gamma_l$,
 and (\ref{Deltashift}) follows.
\end{proof}
 
Finally, we can complete the proof of Theorem \ref{invthm}. 
To  prove the second 
 relation in (\ref{cd}), observe that by Lemma \ref{flaglemma} and Corollary \ref{polycorr}(i),
 $$
 \gamma_i^+ = \left ( X^{\zeta_i^+} p^-_{i} (X) e_1, e_1 \right )= 
  \frac{(-1)^{(i-1)\varepsilon_i^-}}{\gamma_{i}^-\Delta_{i-1}^{(\varkappa_{i-1})}}
 \left (  X^{\zeta_i^+ + k^-_i - i + 1} \P^{(\varkappa_{i-1}-\varepsilon^-_{i})}_{i-1}(X) e_1, e_1 \right ).
 $$
 Since by (\ref{nunu}), \eqref{epsum}, (\ref{kappa}) and Lemma~\ref{allcomb}(iii), 
 $\zeta_i^++k_i^--i+1=\varkappa_i-(i-1)\varepsilon_i^+$, the above equality gives
 $$
 \gamma_i^+ = 
  \frac{(-1)^{(i-1)\varepsilon_i^-}}{\gamma_{i}^-\Delta_{i-1}^{(\varkappa_{i-1})}}
(-1)^{(i-1)\varepsilon_i^+}  \Delta_i^{(\varkappa_{i})},
 $$
 and so
 $$
  \gamma_i^+ \gamma_{i}^- = (-1)^{(i-1)\varepsilon_i} 
  \frac{ \Delta_i^{(\varkappa_{i})}}{\Delta_{i-1}^{(\varkappa_{i-1})}}.
 $$
Consider the ratio 
$$
\frac{\gamma_{i+1}^+ \gamma_{i+1}^-}{\gamma_{i}^+ \gamma_{i}^-}=(-1)^{i\varepsilon_{i+1}-(i-1)\varepsilon_i}
\frac{\Delta_{i-1}^{(\varkappa_{i-1})}\Delta_{i+1}^{(\varkappa_{i+1})}}{\left (\Delta_i^{(\varkappa_{i})}\right )^2}.
$$
 Taking into account (\ref{gamma}), we obtain
$$
c_i^+ c_{i}^- =\frac
{\Delta_{i-1}^{(\varkappa_{i-1})}\Delta_{i+1}^{(\varkappa_{i+1})}}{\left (\Delta_i^{(\varkappa_{i})}\right )^2}  
\frac{d_{i+1}^{ \varepsilon_{i+1}}}{d_i^{2-\varepsilon_i}}(d_1d_2\cdots d_{i-1})^{\varepsilon_{i}-\varepsilon_{i+1}},
$$
which together with the first relation in \eqref{cd} gives the second one.

\section{ Cluster algebra}

\subsection{} Let $N_{u,v}$ be  the network associated with $X\in GL_n$ and the factorization scheme (\ref{factorGuv}). We will now construct  a network $N_{u,v}^\circ$ in an annulus as follows: 

(i) For each $i\in [1,n]$,  add an edge that is directed from the $i$th sink on the right
to the $i$th source on the left in such a way that moving from the $i$th source  to the $i$th sink in $N_{u,v}$ and then returning to the $i$th source in along the new edge, one traverses a closed contour in the counter-clockwise direction. These $n$ new edges do not intersect and to each of them we assign weight $1$. 


(ii) Place the resulting network in the interior of an annulus  in such a way that the cut
(as defined in Section \ref{networks}) intersects $n$ new edges, and the inner boundary of the annulus is inside the domain bounded
by the top horizontal path in $N_{u,v}$ and the $n$th new edge. 


(iii) Place one source and one sink  on the outer boundary of the annulus, the former slightly to the right and the latter 
slightly to the left of the cut.  Split the first (the outermost) new edge into three similarly directed edges
by adding two vertices, a black one slightly to the right and a white one 
slightly to the left of the cut. Add an edge  with weight $w_{in}$ directed from the source to the new black vertex and another edge with weight $w_{out}$ directed from the 
new white vertex to the sink. 

It is important to note that the gauge group is rich enough to assure the possibility of assigning weights as described above,
with unit weights at prescribed edges.  

\begin{examp}\label{runexan}
{\rm
The network $N_{u,v}^\circ$ that corresponds to $N_{u,v}$ discussed in Example~\ref{runex} is shown in Figure~\ref{annex}.} 

\begin{figure}[ht]
\begin{center}
\includegraphics[height=9.0cm]{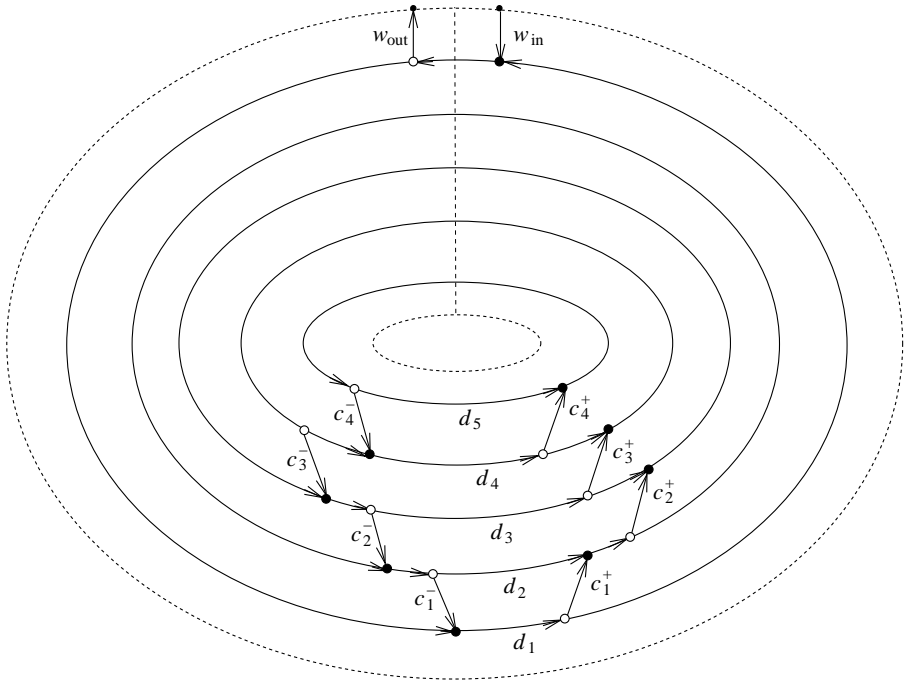}
\caption{Network $N_{u,v}^\circ$ for $N_{u,v}$ from Example~\ref{runex}}
\label{annex}
\end{center}
\end{figure}

\end{examp}

Now let $X$ be an element in a Coxeter double Bruhat cell $G^{u,v}$, $N_{u,v}$ be the network that corresponds
to the factorization (\ref{factorI}) and $N_{u,v}^\circ$
be the corresponding network in an annulus. Then  $N_{u,v}^\circ$ has $2(n-1)$ bounded faces $f_{0 i}$, $f_{1 i}$, $i\in [1,n-1]$, which we enumerate as follows: each face $f_{0 i}$ contains a piece of the cut and each
 face $f_{1 i}$ does not, and the value of $i$ is assigned according to the natural bottom to top
 order inherited from $N_{u,v}$.  There are also three unbounded faces: two of them, adjacent to the outer
 boundary of the annulus, will be denoted $f_{0 0}, f_{1 0} $, where the first index is determined using the same convention as for bounded faces. The third unbounded face is adjacent to the inner boundary. It will be denoted by $f_{0n}$. 
 
 Recall that faces of
 $N_{u,v}^\circ$ correspond to the vertices of the directed dual network $(N_{u,v}^\circ)^*$ (as defined in Section \ref{networks}).
To describe adjacency  properties of  $(N_{u,v}^\circ)^*$,
let us first consider inner vertices of $N_{u,v}^\circ$.  
There are altogether $4n- 2$ inner vertices. 
For every $i\in [1,n-1]$, the $i$th level contains two black and two white vertices. 
One of the black vertices is an endpoint of an edge directed from the $(i+1)$th level; it is denoted $v_b^-(i)$. 
The other one is an endpoint of an edge directed from the $(i-1)$th level (or from the source, for $i=1$); it is denoted $v_b^+(i)$. 
Similarly, white vertices are start points of the edges directed towards the $(i+1)$th and the $(i-1)$th levels (or towards the sink,
for $i=1$); they are denoted $v_w^+(i)$ and $v_w^-(i)$, respectively. The $n$th level contains only $v_b^+(n)$ and $v_w^-(n)$.

Now we can describe faces $f_{0 i}$, $i\in [0,n]$, and $f_{1 i}$, $i\in [0,n-1]$, by listing their vertices in the counterclockwise 
order. Below we use the following convention: if a vertex  appears in the description of a face with the exponent $0$, this
means that this vertex  does not belong to the boundary of the face. With this in mind, we obtain
\begin{equation*}
\begin{split}
 f_{1 i} &= \left (  v_b^-(i)  v_w^-(i)^{\varepsilon_i^-}  v_b^+(i)^{\varepsilon_i^+}  v_w^+(i) v_b^+(i+1) 
v_w^+(i+1)^{\bar\varepsilon_{i+1}^+}  v_b^-(i+1)^{\bar\varepsilon_{i+1}^-}  v_w^-(i+1)\right ),\\ 
 f_{0 i} &= \left (  v_w^+(i)  v_b^+(i)^{\bar\varepsilon_i^+}  v_w^-(i)^{\bar\varepsilon_i^-} v_b^-(i) 
v_w^-(i+1)  v_b^-(i+1)^{\varepsilon_{i+1}^-} v_w^+(i+1)^{\varepsilon_{i+1}^+}  v_b^+(i+1)\right )
\end{split}
\end{equation*}
for $i\in [2,n-2]$ and
\begin{equation*}
\begin{split}
 f_{1 0} &= \left ( \mbox{source}\   v_b^+(1) v_w^+(1) v_b^-(1)  v_w^-(1)\  \mbox{sink}\right ),\\  
 f_{0 0} &= \left ( \mbox{sink}\  v_w^-(1)) v_b^+(1)\  \mbox{source}\right ),\\ 
 f_{1 1} &= \left (  v_b^-(1) v_w^+(1) v_b^+(2)  v_w^+(2)^{\bar\varepsilon_2^+}  v_b^-(2)^{\bar\varepsilon_2^-} v_w^-(2) \right ),\\  
 f_{0 1} &= \left (  v_w^+(1) v_b^+(1) v_w^-(1)  v_b^-(1) v_w^-(2) v_b^-(2)^{\varepsilon_2^-}  v_w^+(2)^{\varepsilon_2^+} v_b^+(2) \right ), \\
 f_{1 n-1} &= \left (    v_b^-(n-1)  v_w^-(n-1)^{\varepsilon_{n-1}^-}  v_b^+(n-1)^{\varepsilon_{n-1}^+} v_w^+(n-1)  v_b^+(n)  v_w^-(n)\right ),\\ 
 f_{0 n-1} &= \left (  v_w^+(n-1)  v_b^+(n-1)^{\bar\varepsilon_{n-1}^+}  v_w^-(n-1)^{\bar\varepsilon_{n-1}^-} v_b^-(n-1)  v_w^-(n)  v_b^+(n)\right ), \\  
 f_{0 n} &= \left (   v_b^+(n)v_w^-(n) \right ).
\end{split}
\end{equation*}

\begin{examp}\label{runexan2}
{\rm
Vertices and faces of $N_{u,v}^\circ$ from Example~\ref{runexan} are shown in Figure~\ref{ann2}.
Consider face $f_{13}$. As we have seen before in Example~\ref{runex}, $\varepsilon^+_3=\varepsilon^+_4=\varepsilon^-_4=0$ and $\varepsilon^-_3=1$, so~the above description yields $f_{1 3} = \left (  v_b^-(3)  v_w^-(3) v_w^+(3) v_b^+(4)
v_w^+(4)  v_b^-(4) v_w^-(4)\right )$.
} 
\end{examp}

\begin{figure}[ht]
\begin{center}
\includegraphics[height=9.0cm]{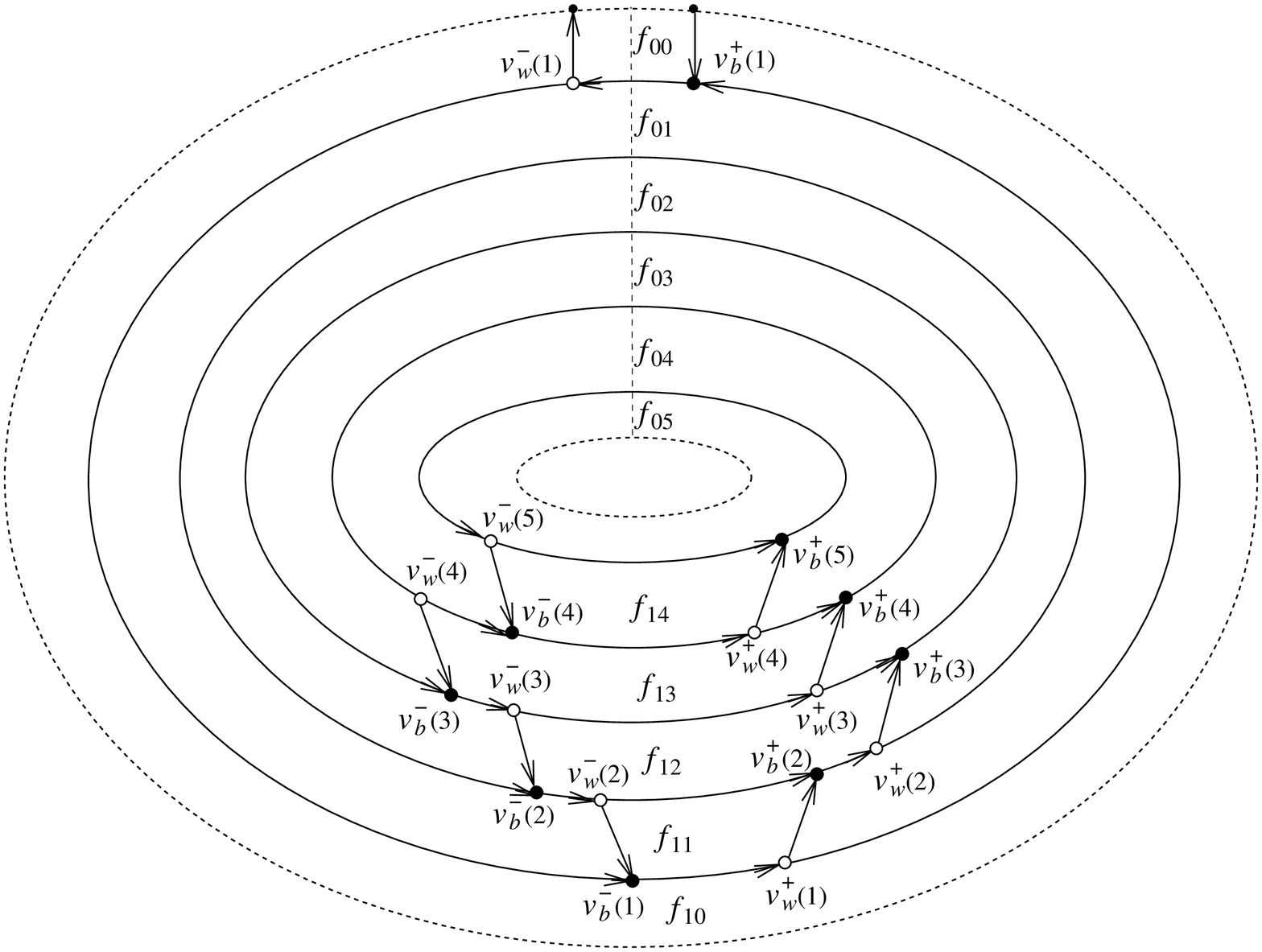}
\caption{Vertices and faces of $N_{u,v}^\circ$ for Example~\ref{runexan}}
\label{ann2}
\end{center}
\end{figure}

In our description of $(N_{u,v}^\circ)^*$ below we use the following convention: whenever we say that there are $\alpha<0$ edges directed 
from vertex $f$ to vertex $f'$, it means that there are $|\alpha|$ edges directed from $f'$ to $f$. It is easy to see that for
any $i\in [0,n-1]$,  faces $f_{0 i}$ and  $f_{1 i}$ have two common edges: $v_w^-(i+1)\to v_b^-(i)$ and
$v_w^+(i)\to v_b^+(i+1)$. The startpoints of each one of the edges are white, the endpoints are black, and in both cases face $f_{1i}$ lies to the right of the edge. This means that in $(N_{u,v}^\circ)^*$  there are two edges directed from $f_{0 i}$ to  $f_{1 i}$. Similarly, the description above shows that $(N_{u,v}^\circ)^*$ has
\begin{itemize}
\item[(i)] $1-\varepsilon_{i+1}$ edges directed from $f_{1 i+1}$ to $f_{1 i}$,
\item[(ii)] $2- \varepsilon_{i+1}$ edges directed from $f_{1 i}$ to $f_{0 i+1}$,
\item[(iii)] $1- \varepsilon_{i+1}$ edges directed from $f_{0 i+1}$ to $f_{0 i}$,
\item[(iv)] $\varepsilon_{i+1}$ edges directed from $f_{1 i+1}$ to $f_{0 i}$
 \end{itemize}
for $i\in [1, n-2]$,
one edge directed from $f_{1 n-1}$ to $f_{0 n}$ and one edge directed from $f_{0 n}$ to $f_{0 n-1}$. Finally,
for $|i-j| >1$ and $a, b \in \{ 0,1\}$, vertices $f_{a i}, f_{b j}$ in $(N_{u,v}^\circ)^*$ are not 
connected by edges.

Next, we associate with every face $f_{st}$ in $N_{u,v}^\circ$ a face weight $y_{st}$. We will see below that 
 $y_{st}$ are related to parameters $c^\pm_i$, $d_i$ via a monomial transformation. At this point, however, let us examine the standard  Poisson bracket on $\mathcal F_{N_{u,v}^\circ}$. As was explained in Section~\ref{networks}, this bracket is completely described by Proposition~\ref{PSviay}, which together with the above description of  $(N_{u,v}^\circ)^*$ implies the following Poisson relations for face weights: 
\be\label{taubracks}
\begin{split}
&\{y_{0i}, y_{1i}\} = 2 y_{0i} y_{1i}, \quad \{y_{1i}, y_{1i+1}\} = -  (1-\varepsilon_{i+1})y_{1i} y_{1i+1},\\
&\{y_{1i}, y_{0 i+1}\} = (2-\varepsilon_{i+1}) y_{1i} y_{0 i+1}, \quad 
\{y_{0i}, y_{0 i+1}\} =  - (1-\varepsilon_{i+1}) y_{0i} y_{0 i+1},\\ 
&\{y_{0i}, y_{1i+1}\} = -  \varepsilon_{i+1} y_{0i} y_{1i+1} 
\end{split}
\ee
for $i\in [2,n-2]$ and
\be\label{taubracks0}
\begin{split}
&\{y_{00}, y_{1 0}\} =  y_{00} y_{1 0},\quad \{y_{10}, y_{0 1}\} = 2 y_{10} y_{0 1},\\ 
&\{y_{10}, y_{1 1}\} = - y_{10} y_{1 1},\quad \{y_{00}, y_{0 1}\} = -  y_{00} y_{0 1},\\ 
&\{y_{0n}, y_{0 n-1}\} =  y_{0n} y_{0 n-1 },\quad \{y_{0n}, y_{1n-1}\} = - y_{0n} y_{1 n-1}, 
\end{split}
\ee
and the rest of the brackets are zero.

Denote by $M(\lambda)$  the {\it boundary measurement\/} for the network $N_{u,v}^\circ$, and put $H_0=w_{in} w_{out}$. 
We have the following

\bp Let $X\in G^{u,v}$ be given by {\rm \eqref{factorI}}, then 
$$
M(\lambda) =  H_0 \left ( (\lambda \one + X)^{-1}e_1,e_1 \right )= -H_0 m(-\lambda; X).
$$ 
\label{Weylresponse}
\ep

\begin{proof} Clearly, $M(\lambda)$ is a power series in $\lambda^{-1}$ with the coefficient of $\lambda^{-k}$ equal to  $(-1)^{k-1}$ times the sum of weights of all paths from the source  to the sink that cross the cut exactly $k$ times. 
Moreover, the leading term of  $M(\lambda)$ is  $H_0 \lambda^{-1}$. 
Denote $M(\lambda)= \sum_{k=0}^\infty (-1)^{k}H_k  \lambda^{-k-1}$.
Since the weight of every path has a factor $H_0$, 
computing $H_k/H_0$ is equivalent to computing the boundary measurement
between the first source and the first sink in the planar network obtained by concatenation
of $k$ copies of $N_{u,v}$. Therefore, $H_k/H_0= (X^ke_1,e_1)=h_k$, and
$$
M(\lambda)= H_0\sum_{k=0}^\infty (-1)^{k}h_k  \lambda^{-k-1}
= H_0\left( \lambda^{-1}\sum_{k=0}^\infty(-\lambda X)^ke_1,e_1\right) = H_0 \left ( (\lambda \one + X)^{-1}e_1,e_1 \right ).
$$
 \end{proof}

 Let $\RR_{n}$ denote the space of rational functions of the form $Q/P$,
where $P$ is a  monic polynomial of degree
$n$, $Q$ is a polynomial of degree at most $n-1$, $P$ and $Q$ are co-prime and $P(0)\ne 0$. 

\bp\label{BMdesc}
The space of boundary measurements associated with the network  $N_{u,v}^\circ$ is dense in $\RR_n$.
\ep

\begin{proof}
Let us first prove that any boundary measurement indeed belongs to $\RR_n$. 
By Proposition~\ref{Weylresponse}, the roots of $P$ are exactly the eigenvalues of $-X$, hence the degree of $P$ equals to $n$. Next, since  $M(\lambda)= \sum_{k=0}^\infty (-1)^{k}H_k  \lambda^{-k-1}$, the value of $M(\lambda)$ at infinity equals zero, and hence the degree of $Q$ is at most $n-1$. 

By Proposition~\ref{Weylresponse} and~\eqref{weyl1}, the coprimality statement is equivalent to saying that $X$ and its
submatrix obtained by deleting the first row and column have no common
eigenvalues. Suppose this is not true, and $\tilde\lambda$ is a common
eigenvalue. Denote $\tilde X = X -\tilde\lambda \one$. Then $\det \tilde X =
\tilde X_{[2,n]}^{[2,n]} = 0$,
and, by the Jacobi determinantal identity,
$$ 
\tilde X_{[1,n-1]}^{[2,n]} \tilde X_{[2,n]}^{[1,n-1]} = 
\tilde X_{[2,n-1]}^{[2,n-1]}\det \tilde X + \tilde X_{[2,n]}^{[2,n]}\tilde X_{[1,n-1]}^{[1,n-1]}=0.
$$
 Thus either $\tilde X_{[2,n]}^{[1,n-1]}$ or  $\tilde X_{[1,n-1]}^{[2,n]}$ is
zero. Assume the latter is true (the other case can be treated similarly).
Consider the classical adjoint $\wh{X}$ of $\tilde X$. Since $\tilde X$ is
degenerate, $\wh{X}$ has rank one. Since $\wh{X}_{11}=\wh{X}_{1n}=0$, either the first row or the first column of $\wh{X}$ has
all zero entries. Assume the latter
is true. This means
that every $(n-1)\times (n-1)$
minor based on the last $n-1$ rows of $\tilde X$ equals zero, and so the $n \times
(n-1)$ submatrix of $\tilde X$
obtained by deleting the first column does not have the full rank. Then there
is a non-zero vector $w$ with
$w_1=0$ such that $\tilde X w = 0$. Therefore, $w$ is linearly independent with the eigenvector 
$(p_i^+(\lambda))_{i=1}^n$ of $X$ constructed  in Corollary~\ref{polycorr}(ii),
whose first component is
equal to $1$. We conclude that the dimension of the eigenspace of $X$ corresponding to
$\tilde\lambda$ is greater than one. However, due to Lemma
\ref{flaglemma} and invertibility of $X$, $e_1$ is a cyclic vector for $X$,
which implies that all eigenspaces of $X$ are one-dimensional. This
completes the proof of coprimality by contradiction. The case when the first row of $\wh{X}$ is zero can be treated similarly.

To prove that $P(0)\ne 0$, we denote $P(\lambda)=\lambda^n +p_{n-1}\lambda^{n-1}+ \cdots + p_0$. Then relation $Q(\lambda)=M(\lambda) P(\lambda)$ yields 
\be 
\label{p-h}
\sum_{i=0}^{n} (-1)^i p_i H_{k+i} = 0,\quad  k\ge 0,
\ee
with $p_n=1$. Relations~\eqref{p-h} for $k\in [0,n-1]$ provide a system of linear equations for $p_0,\dots, p_{n-1}$. The determinant of this system equals $H_0^n\Delta_n^{(n-1)}$. It is well-known (see, e.g. Theorem 8.7.1 in \cite{fuhr}), that
the co-primality of $P$ and $Q$ is equivalent to the non-vanishing of $\Delta_n^{(n-1)}$. So, $p_0=P(0)$ can be restored uniquely as
$(-1)^n\Delta_n^{(n)}/\Delta_n^{(n-1)}$, which by Remark~\ref{detiden} is equal to $(-1)^n\det X$. It remains to recall that $\det X=d_1\cdots d_n\ne 0$.

The density statement follows easily from Theorem~\ref{invthm}: given $M(\lambda)$, one builds Hankel determinants~\eqref{det} and makes use of formulas~\eqref{cd} to restore $X$, provided $H_0$ and all determinants in the denominator do not vanish.
\end{proof}

\br\label{handH}
{\rm
(i) Since $p_0\ne 0$, equations (\ref{p-h}) extended to $k=-1,-2, \ldots$ can be used as a recursive definition of $H_{-1}, H_{-2}, \ldots $.

(ii) Since $m(-\lambda; X) = \frac{q(\lambda)}{p(\lambda)}$, where $p(\lambda) =
\sum_{i=0}^n (-1)^i p_i \lambda^i $ is the characteristic polynomial of $-X$, the Cayley-Hamilton
theorem implies that for any $k
\in \mathbb{Z}$, 
$$
\sum_{i=0}^n (-1)^i p_i h_{k+i} = ( \sum_{i=0}^n (-1)^i p_i X^{k+i} e_1,
e_1) = (X^k p(-X) e_1, e_1)=0. 
$$
Therefore, 
\be\label{hH}
H_k=h_kH_0
\ee
for any $k\in \Z$.

(iii) Denote $Q(\lambda)=q_{n-1}\lambda^n + \cdots + q_0$. Similarly to~\eqref{p-h} one gets
\be 
\label{q-h}
(-1)^{j+1}q_j = \sum_{i=j+1}^{n} (-1)^i p_{i} H_{i-j-1}, \quad j\in [0,n-1].
\ee
}
\er

The following proposition is a particular case of Theorem 3.1 in \cite{GSV4}.

 \bp 
 \label{brackmeas}
 The standard  Poisson bracket on $\mathcal F_{N_{u,v}^\circ}$ induces a Poisson bracket on $\RR_n$.
 This bracket is given by
\begin{equation}
{\displaystyle
\{M(\lambda), M(\mu)\} = -  \left ( \lambda\ M(\lambda)
- \mu\ M(\mu)\right )
\frac{M(\lambda)  - M(\mu)
}{\lambda-\mu}. }
\label{brackrat}
\end{equation}
\ep

\br
{\rm Using Proposition~\ref{Weylresponse}, one can deduce from (\ref{brackrat}) the Poisson brackets for the Weyl function
$m(\lambda)=m(\lambda; X)$:
\begin{equation}
{\displaystyle
\{m(\lambda), m(\mu)\} = - \left ( \lambda m(\lambda)
- \mu m(\mu)\right )
\left ( \frac{m(\lambda)  - m(\mu)
}{\lambda-\mu} + m(\lambda) m(\mu)\right ). }
\label{brackm}
\end{equation}
(The derivation of (\ref{brackm}) from (\ref{brackrat}) can be found in \cite{FayGekh2}, Proposition~3.)
Thus a combination of Theorem 2.1 and Propositions~\ref{Weylresponse} and~\ref{brackmeas} provides a network-based proof of the fact that the standard Poisson--Lie structure on $GL_n$ induces the Poisson bracket  (\ref{brackm}) on Weyl functions. This fact plays a useful role in the study of a multi-Hamiltonian structure of Toda flows.
}
\er

\subsection{} 
 To compute face weights in terms of factorization parameters $c_i^\pm, d_i$ , we introduce new notation that makes
 formulas (\ref{cd}) more convenient. First of all, for any $l\in \mathbb Z$, $i\in \mathbb N$ define, similarly to~\eqref{det}, 
Hankel determinants
\begin{equation}
\DDelta^{(l)}_i=\det (H_{\alpha +\beta + l - i - 1})_{\alpha,\beta=1}^i;
\label{ddet}
\end{equation}
we assume that $\DDelta^l_0=1$ for any $l\in \mathbb Z$. It follows from \eqref{hH} that $\DDelta^{(l)}_i=H_0^i\Delta^{(l)}_i$.
 
Let us fix a pair of Coxeter elements $(u, v)$ and denote
 $\varepsilon= (\varepsilon_i)_{i=1}^{n}$ and
 \be
x_{0i}=x_{0i}(\varepsilon)= \DDelta_{i}^{(\varkappa_{i})}, \quad x_{1i}=x_{1i}(\varepsilon)= \DDelta_{i}^{(\varkappa_{i}+1)},\quad 
c_i= c_i^+ c_{i}^-. 
\label{shorthand}
\ee
Then formulae (\ref{cd}) become
\be
\displaystyle{ d_i =\frac{x_{1i} x_{0 i-1}}{x_{0i} x_{1 i-1}},\quad 
c_i =\frac{x_{0 i-1}x_{0 i+1}}{x_{1 i}^2}
\left ( \frac{x_{1 i+1}}{x_{0 i+1}}  \right )^{\varepsilon_{i+1}} 
\left (\frac{x_{1 i-1}}{x_{0 i-1}}  \right )^{2-\varepsilon_{i}}.
} 
\label{cdshort}
\ee
 
 We now compute the  face weights  for $N_{u,v}^\circ$:
\be\label{faceweights}
\begin{split}
y_{0 i} &= c_i^{-1} = x_{0 i-1}^{1-\varepsilon_{i}}x_{0 i+1}^{\varepsilon_{i+1}-1}
x_{1i}^2 x_{1 i-1}^{\varepsilon_i-2} x_{1 i+1}^{-\varepsilon_{i+1}}, \\ 
y_{1 i} &= \frac{c_i d_i}{d_{i+1}} = 
x_{1 i-1}^{1-\varepsilon_{i}}x_{1 i+1}^{\varepsilon_{i+1}-1}
x_{0i}^{-2} x_{0 i-1}^{\varepsilon_i} x_{0 i+1}^{2-\varepsilon_{i+1}},
\end{split}
\ee
for $i\in [1, n-1]$ and
\be\label{faceweights0}
y_{0 0} = \frac{1}{H_0} = x_{01}^{-1}, \quad y_{1 0} = \frac{H_0^2}{H_1}= x_{01}^{2} x_{11}^{-1}, \quad 
y_{0n}= d_n = x_{1 n} x_{0 n-1}
x_{0n}^{-1} x_{1 n-1}^{-1}.
\ee
 
 We will re-write (\ref{faceweights}) for $i=n-1$ in a slightly different way. Recall that $\varepsilon_{n}=0$. Due to (\ref{shift}), 
\begin{equation}\label{perver}
\begin{split} 
 x_{0 n}^{2} x_{1 n}^{-1}&=
\DDelta_{n}^{(\varkappa_n-1)}= \DDelta_{n}^{(n-1)} (\det X)^{\varkappa_n -n},\\ 
x_{0 n}& =
\DDelta_{n}^{(\varkappa_n)}= \DDelta_{n}^{(n-1)} (\det X)^{\varkappa_n -n +1 }. 
\end{split}
\end{equation}
Thus, 
(\ref{faceweights}) yields
\begin{equation*}
\begin{split}
y_{0 n-1} &= x_{0 n-2}^{1-\varepsilon_{n-1}}
x_{1n-1}^2 x_{1 n-2}^{\varepsilon_{n-1}-2}\left (\DDelta_{n}^{(n-1)}\right )^{-1} \left (\frac{1}{\det X}\right )^{\varkappa_n -n +1}, \\ 
y_{1 n-1} &= 
x_{1 n-2}^{1-\varepsilon_{n-1}}
x_{0 n-1}^{-2} x_{0 n-2}^{\varepsilon_{n-1}} \DDelta_{n}^{(n-1)}\left (\frac{1}{\det X}\right )^{n- \varkappa_n}.
\end{split}
\end{equation*}
 
 Define 
 \be\label{initclust}
 \x =\x (\varepsilon)=(x_i)_{i=1}^{2n}= \left( x_{01},  x_{11}, \ldots, x_{0 n-1},  x_{1 n-1}, \DDelta_{n}^{(n-1)}, 
 \frac{\DDelta_n^{(n-2)}}{\DDelta_n^{(n-1)}}=\frac{1}{\det X} \right)
\ee 
and $\y=\y (\varepsilon)= (y_i)_{i=1}^{2n}=  ( y_{00},  y_{10}, \ldots, y_{0 n-1},  y_{1n-1} )$. Then $ y_i = \prod_{i=1}^{2n} x_j^{a_{ij}}$, where
$A= (a_{ij})_{i=1}^{2n}$ is an $n\times n$ block lower--triangular matrix with $2\times 2$ blocks:
\begin{equation*}
A= \left (
\begin{array} {ccccc}
\ V_1 & 0 & 0 & 0 & 0\\
\ U & V_2 & 0 & 0 & 0\\
-V_2^T & U & V_3 & 0 & 0\\
 0 & \ddots & \ddots & \ddots & 0\\
 0 & 0 &-V_{n-1}^T & U & V_n 
\end{array}
\right )
\end{equation*}
with
\be
\label{Xtofaceblocks}
\begin{gathered}
U= \left (
\begin{array} {cc}
 0 & 2 \\
-2 & 0 
\end{array}
\right ), \quad
V_1= \left (
\begin{array} {cc}
 -1 & 0 \\
2 & -1 
\end{array}
\right ), \quad
V_n= \left (
\begin{array} {cc}
 -1 & \varkappa_n -n +1 \\
1  & n- \varkappa_n 
\end{array}
\right ),\\
V_i= \left (
\begin{array} {cc}
\varepsilon_i -1 & -\varepsilon_i \\
2 -\varepsilon_i  & \varepsilon_i  -1 
\end{array}
\right ),\quad  i\in [2, n-1].
\end{gathered}
\ee
The matrix $A$ is invertible, since $\det V_i=1$, $i\in [1,n-1]$ and $\det V_n=-1$.
 
 \br{\rm
Note that the expression for $x_{2n}$ in terms of face weights is independent of $\varepsilon$:
 $$
 x_{2n}= (y_{00}y_{10})^n  (y_{01}y_{11})^{n-1} \cdots (y_{0n-1}y_{1n-1}).
 $$
  }
  \er

 \subsection{} Given a pair of Coxeter elements $(u,v)$, we want to define a cluster algebra with the
compatible Poisson bracket given by~\eqref{brackrat}. 
To this end, we use the strategy developed in \cite{GSV1}. The first step
consists in finding a coordinate system on $\RR_n$ 
such that written in terms of their logarithms, the Poisson bracket~\eqref{brackrat} becomes constant.
Having in mind Proposition~\ref{lostprop}, we require this coordinate system to be given by a collection of regular functions on $\RR_n$.
Clearly, $H_i$, $i\ge 0$, are regular on $\RR_n$, and hence so are $\DDelta_n^{(n-1)}$ and $\DDelta_n^{(n)}$. 
Besides, it was explained in the proof of Lemma~\ref{BMdesc} that $\DDelta_n^{(n-1)}$ and $\DDelta_n^{(n)}$ 
do not vanish on $\RR_n$, hence $(\DDelta_n^{(n-1)})^{-1}$ and $(\DDelta_n^{(n)})^{-1}$ are regular as well. Consequently, by
Remark~\ref{handH}(i), $H_i$ are regular functions on $\RR_n$ for $i<0$, and hence so are Hankel determinants~\eqref{ddet} for any $l\in \Z$, $i\in \mathbb{N}$.
Therefore, components of $\x(\varepsilon)$ are regular functions on $\RR_n$ and they are connected by
an invertible monomial transformation to face weights $\y(\varepsilon)$
that satisfy Poisson relations (\ref{taubracks}), \eqref{taubracks0} of the required kind.
Therefore we can use $\x(\varepsilon)$ as an initial cluster. Now, following \cite{GSV1}, we have to compute
the matrix that defines cluster transformations, based on the coefficient matrix of the bracket \eqref{brackrat}.
 
Define a $2n\times 2n$ matrix
\be\label{bepsilon}
B(\varepsilon) =- \left (
\begin{array} {cccc}
\  U & V_2 & 0 & 0 \\
-V_2^T & U & V_3 & 0 \\
 0 & \ddots & \ddots & \ddots \\
  0 & 0 & -V_{n}^T & -\frac12 U 
\end{array}
\right )
\ee
 with $2\times 2$ block coefficients given by (\ref{Xtofaceblocks}). 
Denote by $\tilde B(\varepsilon)$ the $(2n-2)\times 2n$ submatrix of $B(\varepsilon)$ formed by the first $2n-2$ 
rows and consider the cluster algebra  $\A_\varepsilon$ of rank $2n-2$ with the initial seed
 $\Sigma(\varepsilon)=(\x(\varepsilon),\tilde B(\varepsilon))$,  
so that $x_i$, $i\in [1,2n-2]$, are 
cluster variables and $x_{2n-1}$,  $x_{2n}$ are stable variables.

\begin{lemma}
\label{cmptbl}
Poisson structure {\rm \eqref{brackrat}} is compatible with the cluster algebra $\A_\varepsilon$.
\end{lemma}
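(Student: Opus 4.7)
The plan is to verify the two assertions that characterize compatibility in the sense of Section~\ref{CA&PB} and \cite{GSV1}: (a) the extended cluster $\x(\varepsilon)$ is log-canonical for the bracket \eqref{brackrat}, with some constant coefficient matrix $\Omega = \Omega^{\x(\varepsilon)}$; and (b) $\tilde B(\varepsilon)\,\Omega = [\,D \mid 0\,]$, where $D$ is a non-degenerate diagonal $(2n-2)\times(2n-2)$ matrix and the $(2n-2)\times 2$ zero block occupies the columns indexed by the stable variables $x_{2n-1},x_{2n}$.

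For (a), I would transfer log-canonicity from the face weights. By Proposition~\ref{brackmeas}, the bracket \eqref{brackrat} is the push-forward, under the boundary measurement map $\FF_{N_{u,v}^\circ} \to \RR_n$, of the standard Poisson bracket on the space of face weights, which by Proposition~\ref{PSviay} together with the explicit enumeration of the dual graph $(N_{u,v}^\circ)^*$ given just before is exactly encoded by \eqref{taubracks}, \eqref{taubracks0}. Thus $\y(\varepsilon)$ is log-canonical with a known constant integer coefficient matrix $\Omega^y$. The relation $\log\y = A\log\x$ from \eqref{faceweights}, \eqref{faceweights0} is an invertible monomial substitution, since $\det A = \prod_{i=1}^{n}\det V_i = 1\cdot 1^{n-2}\cdot(-1) = -1$ by a direct $2\times 2$ determinant computation. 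Consequently $\x(\varepsilon)$ is also log-canonical, with coefficient matrix $\Omega = A^{-1}\Omega^y(A^T)^{-1}$.

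For (b), rather than expand $\Omega$ explicitly, I would right-multiply the desired identity by $A^T$ to reduce it to the equivalent form
\[
\tilde B(\varepsilon)\, A^{-1}\, \Omega^y \;=\; [\,D \mid 0\,]\, A^T.
\]
The key structural observation is the near-shift relation between $B(\varepsilon)$ and $A$: for $i = 1,\dots,n-1$, the $i$-th block-row of $B(\varepsilon)$ equals $-1$ times the $(i+1)$-th block-row of $A$, while the $n$-th block-row of $B(\varepsilon)$ is the same shift with its diagonal entry $U$ replaced by $-\tfrac12 U$. This yields an explicit closed form for $\tilde B(\varepsilon)A^{-1}$, after which the product with the block-tridiagonal $\Omega^y$ becomes a finite block calculation governed by a handful of elementary $2\times 2$ identities (such as $U^T = -U$, $U^2 = -4I$, and $V_i U + U V_i^T = 2(\varepsilon_i-1)U$). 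The principal $(2n-2)\times(2n-2)$ block turns out to be diagonal with entries $\pm 2$, while the last two columns vanish thanks to the exceptional $-\tfrac12 U$ and the $\varkappa_n$-dependent entries of $V_n$.

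The main obstacle will be the boundary analysis at the two ends of the block tridiagonal. At the top level, the anomalous blocks $V_1$ and $\tfrac12 U$ (appearing in $\Omega^y$ because of the modified relations in \eqref{taubracks0}) must combine to give the correct first diagonal entry of $D$; at the bottom level, the $-\tfrac12 U$ in $B(\varepsilon)$ together with the $\varkappa_n$-dependent entries of $V_n$ must conspire to annihilate exactly the two stable-variable columns of $\tilde B(\varepsilon)\,\Omega$. Both are routine block matrix verifications, but it is precisely here that the specific normalizations built into the definitions \eqref{initclust} of $x_{2n-1}, x_{2n}$ and \eqref{bepsilon} of $B(\varepsilon)$ are used in an essential way; once these cancellations are established, the non-degeneracy of $D$ is immediate from its $\pm 2$ entries.
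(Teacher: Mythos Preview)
Your strategy coincides with the paper's: part (a) is identical, and for part (b) both you and the paper exploit the block-shift relationship between $A$, $\Omega^y$ and $B(\varepsilon)$. The paper phrases this as the column identity $A = \Omega^y J^T + V_n\otimes E_{nn}$ (where $J$ is the block down-shift), which is the transpose of your row observation that the $i$th block-row of $B(\varepsilon)$ equals $-1$ times the $(i+1)$st block-row of $A$. From this the paper computes directly
\[
A^T(\Omega^y)^{-1}A \;=\; J\,(\Omega^y)^T J^T \;-\; J(V_n\otimes E_{nn}) \;+\; (V_n^T\otimes E_{nn})J^T \;+\; (V_n^T\otimes E_{nn})(\Omega^y)^{-1}(V_n\otimes E_{nn}) \;=\; B(\varepsilon),
\]
using $V_i^T U V_i = U$ and that the bottom-right block of $(\Omega^y)^{-1}$ is $-\tfrac12 U$ (hence $V_n^T U V_n = -U$ produces the exceptional $-\tfrac12 U$ in $B(\varepsilon)$). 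Thus $B(\varepsilon)$ is \emph{exactly} $(\Omega^{\x})^{-1}$, and the compatibility condition $\tilde B(\varepsilon)\,\Omega^{\x} = [\,D\mid 0\,]$ holds trivially with $D = \one_{2n-2}$.

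This last point is where your sketch goes slightly astray: the diagonal matrix $D$ is the identity, not a matrix with entries $\pm 2$. If you carry your block computation through you will see all the $U$-contributions cancel down to $\one_2$ on each diagonal block; the vanishing of the last two columns is then automatic (it is just the statement that $B(\varepsilon)\Omega^{\x}=\one_{2n}$), and no separate boundary cancellation argument is required. Recognizing that $B(\varepsilon)=(\Omega^{\x})^{-1}$ outright is what makes the paper's version cleaner than the piecewise block verification you outline, but the underlying linear-algebraic content is the same.
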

 
\begin{proof} 
 Let us first revisit standard Poisson structure on $\mathcal F_{N_{u,v}^\circ}$ described by
(\ref{taubracks}),~\eqref{taubracks0}. 
It is easy to see that in terms of the components of the vector $\y=\y(\varepsilon)$, 
this bracket can be written as $\{ y_i, y_j \}=  \omega_{ij} y_i y_j$, where
the matrix $\Omega=\Omega(\varepsilon)= (\omega_{ij})_{i,j=1}^{2n}$ is given by
\be
\label{Omega}
\Omega = \left (
\begin{array} {cccc}
\ \frac{1}{2} U & V_1 & 0 & 0 \\
-V_1^T & U & V_2 & 0 \\
 0 & \ddots & \ddots & \ddots \\
  0 & 0 & -V_{n-1}^T & U 
\end{array}
\right ). 
\ee
Therefore, the matrix of coefficients of the Poisson bracket (\ref{brackrat}) written in coordinates $\x(\varepsilon)$ is 
$\Omega^\x=A^{-1} \Omega (A^T)^{-1}$. 

Note that $\Omega$ defined by~\eqref{Omega} is invertible.
 To see that, observe that the block-entries of
$\Omega$ satisfy relations
$ V^T_i U V_i = U$,  $i\in [1, n-1]$, and $U^2=4\one_2$, which implies that $\Omega$ can be
factored as
\be\label{omegafactor}
\Omega = \left (
\begin{array} {cccc}
\one_2 & 0 & 0 & 0 \\
\frac{1}{2} V_1^T U & \one_2 & 0 & 0 \\
0 & \ddots & \ddots & \ddots \\
 0 & 0 & \frac{1}{2} V_{n-1}^T U & \one_2
\end{array}
\right )\
\left (
\begin{array} {cccc}
\ \frac{1}{2} U & V_1 & 0 & 0 \\
0 & \frac{1}{2} U & V_2 & 0 \\
0 & \ddots & \ddots & \ddots \\
 0 & 0 & 0 & \frac{1}{2} U
\end{array}
\right ).
\ee
Therefore, $\det \Omega =1$, and hence $\Omega^\x$ is invertible. 

To find its inverse $A^T\Omega^{-1}A$, observe that if we define 
$$
J = \left (
\begin{array} {cccc}
0 & \one_2 & 0 & 0 \\
0 & 0& \one_2 & 0 \\
 0 & \ddots & \ddots & \ddots \\
  0 & 0 & 0 & 0 
\end{array}
\right ),
$$
then $A = \Omega  J^T+V_n\otimes E_{nn}$ with $ E_{nn} = ( \delta_{i n} \delta_{j n})_{i,j=1}^n$. We then have
$$
A^T\Omega^{-1}A=  J \Omega^T J^T - J \left ( V_n\otimes E_{nn} \right ) + \left ( V_n^T\otimes E_{nn} \right ) J^T +
 \left ( V_n^T\otimes E_{nn} \right ) \Omega^{-1} \left ( V_n\otimes E_{nn} \right )=B(\varepsilon), 
$$
since by \eqref{omegafactor}, the lower-right $2\times 2$ block of  $\Omega^{-1}$ equals $-\frac12 U$ and $V_n^TUV_n=-U$.

So,  $B(\varepsilon)$ is non-degenerate and skew-symmetric. 
Thus we can invoke Theorem 1.4
of \cite{GSV1}. According to equation (1.5) in the proof of this theorem, compatibility will follow from the condition 
$\tilde B(\varepsilon)  \Omega^\x= (D\ \ 0 )$, where $D$ is a $(2n-2)\times (2n-2)$ diagonal matrix.
Since $B^{-1}(\varepsilon)= \Omega^\x$, this condition is obviously satisfied with $D=\one_{2n-2}$.
\end{proof}

Our goal is to prove

\bt 
{\rm (i)} The cluster algebra $\A_\varepsilon$ does  not depend on $\varepsilon$.

{\rm (ii)} The localization of the complex form of $\A_\varepsilon$ with respect to 
the stable variables $x_{2n-1}$, $x_{2n}$  is isomorphic to the ring
of regular functions on  $\RR_n$.
\label{clusterrat}
\et

\begin{proof}
First, we will compute cluster transformations~\eqref{exchange} of the initial cluster $\x(\varepsilon)$
in directions $(0i)$ and $(1i)$.
The transformed variables are denoted $\bar x_{0i}$ and $\bar x_{1i}$, respectively.
By~\eqref{bepsilon}, for $i\in [1,n-2]$ the transformations in question are determined  by the matrix
\[
\left(\begin{array}{cccccc}
\varepsilon_i-1 & 2- \varepsilon_i & 0  & -2 & 1-\varepsilon_{i+1} & \varepsilon_{i+1}\\
-\varepsilon_i  & \varepsilon_i-1  & 2  &  0 & \varepsilon_{i+1}-2 & 1-\varepsilon_{i+1}
\end{array}\right).
\]
Therefore, we have to consider the following cases.

{\bf Case 1:} $\varepsilon_{i}=\varepsilon_{i+1}= 0$. Then by (\ref{kappa}), $\varkappa_{i+1}= \varkappa_{i} + 1$ and $\varkappa_{i-1} =\varkappa_{i} - 1$,  so $x_{0 i}= \DDelta_{i}^{(\varkappa_i)}$ is transformed into
$$
\bar{x}_{0 i} = \frac{\DDelta_{i-1}^{(\varkappa_i-1)}\left ( \DDelta_{i}^{(\varkappa_i+1)} \right )^2 +  \DDelta_{i+1}^{(\varkappa_i+1)}\left ( \DDelta_{i-1}^{(\varkappa_i)} \right )^2}{\DDelta_{i}^{(\varkappa_i)}}.
$$
Using (\ref{HankJac}) and \eqref{hH}, we re-write the numerator as
\begin{equation*}
\begin{split}
\DDelta_{i-1}^{(\varkappa_i-1)}&\left ( \DDelta_{i}^{(\varkappa_i)} \DDelta_{i}^{(\varkappa_i+2)} - 
\DDelta_{i-1}^{(\varkappa_i+1)} \DDelta_{i+1}^{(\varkappa_i+1)} \right ) + 
\DDelta_{i+1}^{(\varkappa_i+1)}\left ( \DDelta_{i-1}^{(\varkappa_i)} \right )^2\\ 
&= \DDelta_{i}^{(\varkappa_i)}  \DDelta_{i-1}^{(\varkappa_i-1)} \DDelta_{i}^{(\varkappa_i+2)} +
\DDelta_{i+1}^{(\varkappa_i+1)} \left ( \left ( \DDelta_{i-1}^{(\varkappa_i)} \right )^2 - 
\DDelta_{i-1}^{(\varkappa_i-1)} \DDelta_{i-1}^{(\varkappa_i+1)} \right ) \\ 
&=\DDelta_{i}^{(\varkappa_i)} \left (  \DDelta_{i-1}^{(\varkappa_i-1)} \DDelta_{i}^{(\varkappa_i+2)}
- \DDelta_{i-2}^{(\varkappa_i)} \DDelta_{i+1}^{(\varkappa_i+1)} \right ),
\end{split}
\end{equation*}
and so
\be\label{10i}
\bar{x}_{0 i} = \DDelta_{i-1}^{(\varkappa_i-1)} \DDelta_{i}^{(\varkappa_i+2)}
- \DDelta_{i-2}^{(\varkappa_i)} \DDelta_{i+1}^{(\varkappa_i+1)}.
\ee

Similarly, $x_{1 i}= \DDelta_{i}^{(\varkappa_i+1)}$ is transformed into
$$
\bar{x}_{1 i} = \frac{\DDelta_{i-1}^{(\varkappa_i)}\left ( \DDelta_{i+1}^{(\varkappa_i+1)} \right )^2 +  \DDelta_{i+1}^{(\varkappa_i+2)}\left ( \DDelta_{i}^{(\varkappa_i)} \right )^2}{\DDelta_{i}^{(\varkappa_i+1)}},
$$
which can be re-written as
\be\label{11i}
\bar{x}_{1 i} = \DDelta_{i}^{(\varkappa_i-1)} \DDelta_{i+1}^{(\varkappa_i+2)}
- \DDelta_{i-1}^{(\varkappa_i)}\DDelta_{i+2}^{(\varkappa_i+1)}.
\ee

{\bf Case 2:} $\varepsilon_{i}=\varepsilon_{i+1}= 2$. This case is similar to Case 1. 
We have $\varkappa_{i+1} = \varkappa_{i} - 1$ and $\varkappa_{i-1} =  \varkappa_{i} +1$,
hence
\be\label{20i}
\bar{x}_{0 i} = \frac{\DDelta_{i-1}^{(\varkappa_i+1)}\left ( \DDelta_{i+1}^{(\varkappa_i)} \right )^2 +  \DDelta_{i+1}^{(\varkappa_i -1)}\left ( \DDelta_{i}^{(\varkappa_i+1)} \right )^2}{\DDelta_{i}^{(\varkappa_i)}}=
\DDelta_i^{(\varkappa_{i}+2)}\DDelta_{i+1}^{(\varkappa_i-1)}-\DDelta_{i-1}^{(\varkappa_i+1)}\DDelta_{i+2}^{(\varkappa_i)}
\ee
and 
\be\label{21i}
\bar{x}_{1 i} = \frac{\DDelta_{i+1}^{(\varkappa_i)}\left ( \DDelta_{i-1}^{(\varkappa_i+1)}\right )^2+
\DDelta_{i-1}^{(\varkappa_i+2)}\left ( \DDelta_{i}^{(\varkappa_i)} \right )^2}{\DDelta_{i}^{(\varkappa_i+1)}}
=
\DDelta_{i-1}^{(\varkappa_i+2)}\DDelta_i^{(\varkappa_{i}-1)}-\DDelta_{i-2}^{(\varkappa_i+1)}\DDelta_{i+1}^{(\varkappa_i)}.
\ee

{\bf Case 3:} $\varepsilon_{i}=0$, $\varepsilon_{i+1}= 2$. We have $\varkappa_{i+1} =\varkappa_{i-1} =  \varkappa_{i} - 1$, 
and so $x_{0i}$ is transformed into
$$
\bar{x}_{0 i} = \frac{\left ( \DDelta_{i-1}^{(\varkappa_i)} \DDelta_{i+1}^{(\varkappa_i)} \right )^2 + 
 \DDelta_{i-1}^{(\varkappa_i-1)}\ \DDelta_{i+1}^{(\varkappa_i-1)}\left ( \DDelta_{i}^{(\varkappa_i+1)} \right )^2}{\DDelta_{i}^{(\varkappa_i)}}.
$$
The numerator of the above expression can be re-written as
\begin{equation*}
\begin{split}
\left ( \DDelta_{i-1}^{(\varkappa_i)}\right. &\left.\DDelta_{i+1}^{(\varkappa_i)} \right )^2 + 
\left (  \DDelta_{i}^{(\varkappa_i)}\ \DDelta_{i}^{(\varkappa_i-2)} - \left ( \DDelta_{i}^{(\varkappa_i-1)} \right )^2\right )
\left ( \DDelta_{i}^{(\varkappa_i+1)} \right )^2\\
&= \left ( 
\left ( \DDelta_{i-1}^{(\varkappa_i)} \DDelta_{i+1}^{(\varkappa_i)} \right )^2 - 
\left ( \DDelta_{i}^{(\varkappa_i+1)} \DDelta_{i}^{(\varkappa_i-1)} \right )^2  
\right )
+    \DDelta_{i}^{(\varkappa_i)}\ \DDelta_{i}^{(\varkappa_i-2)}  
\left ( \DDelta_{i}^{(\varkappa_i+1)} \right )^2 \\
&=\DDelta_{i}^{(\varkappa_i)} 
\left (\DDelta_{i}^{(\varkappa_i-2)} 
\left (\DDelta_{i}^{(\varkappa_i+1)} \right )^2 -
\DDelta_{i}^{(\varkappa_i)} 
\left (\DDelta_{i-1}^{(\varkappa_i)} \DDelta_{i+1}^{(\varkappa_i)}  +  \DDelta_{i}^{(\varkappa_i+1)} \DDelta_{i}^{(\varkappa_i-1)}  \right ) 
\right ), 
\end{split}
\end{equation*}
and so
\be\label{30i}
\bar{x}_{0 i} =   \DDelta_{i}^{(\varkappa_i-2)} \left ( \DDelta_{i}^{(\varkappa_i+1)} \right )^2 -
\DDelta_{i}^{(\varkappa_i)} \left (  \DDelta_{i-1}^{(\varkappa_i)} \DDelta_{i+1}^{(\varkappa_i)}  +  \DDelta_{i}^{(\varkappa_i+1)} \DDelta_{i}^{(\varkappa_i-1)}  \right ).
\ee

On the other hand,
\be\label{31i}
\bar{x}_{1 i} = \frac{\DDelta_{i-1}^{(\varkappa_i)} \DDelta_{i+1}^{(\varkappa_i)}  +  \left ( \DDelta_{i}^{(\varkappa_i)} \right )^2}{\DDelta_{i}^{(\varkappa_i+1)}}= \DDelta_{i}^{(\varkappa_i-1)}.
\ee

{\bf Case 4:} $\varepsilon_{i}=2$, $\varepsilon_{i+1}= 0$. This case is similar to Case 3. 
We have $\varkappa_{i+1} =\varkappa_{i-1} =  \varkappa_{i} +1$,
hence
\be\label{40i}
\bar{x}_{0 i} = \frac{\DDelta_{i-1}^{(\varkappa_i+1)} \DDelta_{i+1}^{(\varkappa_i+1)}  +  \left ( \DDelta_{i}^{(\varkappa_i+1)} \right )^2}{\DDelta_{i}^{(\varkappa_i)}}=\DDelta_{i}^{(\varkappa_i+2)} 
\ee
and
\be\label{41i}
\begin{split}
\bar{x}_{1 i} &=\frac{\left ( \DDelta_{i-1}^{(\varkappa_i+1)} \DDelta_{i+1}^{(\varkappa_i+1)} \right )^2 + 
 \DDelta_{i-1}^{(\varkappa_i+2)}\ \DDelta_{i+1}^{(\varkappa_i+2)}\left ( \DDelta_{i}^{(\varkappa_i)} \right )^2}{\DDelta_{i}^{(\varkappa_i+1)}}\\
&=   \DDelta_{i}^{(\varkappa_i+3)} \left ( \DDelta_{i}^{(\varkappa_i)} \right )^2 -
\DDelta_{i}^{(\varkappa_i+1)} \left (  \DDelta_{i-1}^{(\varkappa_i+1)} \DDelta_{i+1}^{(\varkappa_i+1)}  +  \DDelta_{i}^{(\varkappa_i)} \DDelta_{i}^{(\varkappa_i+2)}  \right ).
\end{split}
\ee

{\bf Case 5:} $\varepsilon_{i}=1$, $\varepsilon_{i+1}= 2$. We have $\varkappa_{i+1}  =  \varkappa_{i} - 1$,  $\varkappa_{i-1}=\varkappa_{i}$, so $x_{0i}$ is transformed via~\eqref{20i}
and $x_{1i}$ via~\eqref{31i}.
 
 
{\bf Case 6:} $\varepsilon_{i}=2$, $\varepsilon_{i+1}= 1$. 
We have $\varkappa_{i+1} =\varkappa_{i}$, $\varkappa_{i-1} =  \varkappa_{i} +1$,
so $x_{0i}$ is transformed via~\eqref{40i} and $x_{1i}$ via~\eqref{21i}.

{\bf Case 7:} $\varepsilon_{i}=0$, $\varepsilon_{i+1}= 1$. 
We have $\varkappa_{i+1} =\varkappa_{i}$, $\varkappa_{i-1} =  \varkappa_{i} -1$,
so $x_{0i}$ is transformed via~\eqref{10i} and $x_{1i}$ via~\eqref{31i}.

{\bf Case 8:} $\varepsilon_{i}=1$, $\varepsilon_{i+1}= 0$. 
We have $\varkappa_{i+1} =\varkappa_{i}+1$, $\varkappa_{i-1} =  \varkappa_{i}$,
so $x_{0i}$ is transformed via~\eqref{40i} and $x_{1i}$ via~\eqref{11i}.

{\bf Case 9:} $\varepsilon_{i}=\varepsilon_{i+1}= 1$. We have $\varkappa_{i+1}=\varkappa_{i-1}=\varkappa_i$,
so $x_{0i}$ is transformed via~\eqref{40i} and $x_{1i}$ via~\eqref{31i}.

Now, let $i=n-1$. In this situation transformations of the initial cluster are determined by the matrix
\[
\left(\begin{array}{cccccc}
\varepsilon_{n-1}-1 & 2- \varepsilon_{n-1} & 0  & -2 & 1 & n-\varkappa_n-1\\
-\varepsilon_{n-1}  & \varepsilon_{n-1}-1  & 2  &  0 & -1 & \varkappa_n-n
\end{array}\right).
\]
Note that $\varkappa_n$ does not exceed $n-1$, so the last two elements in the first row are always nonnegative, and the last two elements in the second row are always negative. By~\eqref{perver}, they contribute to the corresponding relations
$\DDelta_{n}^{(\varkappa_n)}$ and $\DDelta_{n}^{(\varkappa_n-1)}$, respectively.
Therefore, we  have to consider the following cases.

{\bf Case 10:} $\varepsilon_{n-1}=0$. Then $\varkappa_{n-1} = \varkappa_{n} - 1= \varkappa_{n-2} + 1$, and  $x_{0 n-1}$
is transformed into
\begin{equation*}
\bar{x}_{0 n-1} 
=  \frac{\left ( \DDelta_{n-1}^{(\varkappa_n)} \right )^2 \DDelta_{n -2}^{(\varkappa_n -2 )} +   \DDelta_{n}^{(\varkappa_n)}\left ( \DDelta_{n-2}^{(\varkappa_n-1)} \right )^2}{\DDelta_{n-1}^{(\varkappa_n-1)}}; 
\end{equation*}
Similarly to Case 1, this gives~\eqref{10i} for $i=n-1$.

On the other hand,
\begin{equation*}
\bar{x}_{1 n-1} 
= \frac{\left ( \DDelta_{n-1}^{(\varkappa_n-1)} \right )^2 +   \DDelta_{n}^{(\varkappa_n-1)}\DDelta_{n-2}^{(\varkappa_n-1)} }{\DDelta_{n-1}^{(\varkappa_n)}}, 
\end{equation*}
which gives \eqref{31i} for $i=n-1$. We thus see that the transformations in this case are exactly the same as in Case 7.

{\bf Case 11:} $\varepsilon_{n-1}=1$. Then $\varkappa_{n-1} = \varkappa_{n} - 1= \varkappa_{n-2} $, and hence 
\begin{equation*}
\bar{x}_{0 n-1} 
= \frac{\left ( \DDelta_{n-1}^{(\varkappa_n)} \right )^2 +   \DDelta_{n}^{(\varkappa_n)}\DDelta_{n-2}^{(\varkappa_n)} }{\DDelta_{n-1}^{(\varkappa_n-1)}}, 
\end{equation*}
which gives \eqref{40i} for $i=n-1$. 

Similarly,
\begin{equation*}
\bar{x}_{1 n-1} 
= \frac{\left ( \DDelta_{n-1}^{(\varkappa_n-1)} \right )^2 +   \DDelta_{n}^{(\varkappa_n-1)}\DDelta_{n-2}^{(\varkappa_n-1)} }{\DDelta_{n-1}^{(\varkappa_n)}}, 
\end{equation*}
which gives \eqref{31i} for $i=n-1$. We thus see that the transformations in this case are exactly the same as in Case 9.

{\bf Case 12:} $\varepsilon_{n-1}=2$. Then $\varkappa_{n-1} = \varkappa_{n} - 1= \varkappa_{n-2} -1 $, and  
$x_{0 n-1}$ transforms exactly as in the previous case.

On the other hand,
$$
\bar{x}_{0 n-1} 
= \frac{\left ( \DDelta_{n-1}^{(\varkappa_n-1)} \right )^2 \DDelta_{n -2}^{(\varkappa_n +1 )} +   \DDelta_{n}^{(\varkappa_n-1)}\left ( \DDelta_{n-2}^{(\varkappa_n)} \right )^2}{\DDelta_{n-1}^{(\varkappa_n)}}. 
$$
Similarly to Case 2, this gives~\eqref{21i} for $i=n-1$. We thus see that the transformations in this case are exactly the same as in Case 6.

%

Let $(u',v')$ be an arbitrary pair of Coxeter elements, $\varepsilon'$ be the corresponding $n$-tuple built by~\eqref{eps} and~\eqref{epsum}.

\bl 
\label{Lem4.2}
For any Coxeter elements $u'$, $v'$, the seed $\Sigma(\varepsilon')=(\x(\varepsilon'),\tilde B(\varepsilon'))$ belongs to $\A_\varepsilon$. 
\el

\begin{proof} First we will show that, in certain cases, mutations of the seed $\Sigma(\varepsilon)$ 
transform it into a seed equivalent to $\Sigma(\varepsilon')$ for an appropriately chosen $\varepsilon'$. 
These situations are listed
in the table below. In this table, only the entries at which   $\varepsilon$ and $\varepsilon'$ differ are specified.
In the first four rows $i$ is assumed to be less than $n-1$. The second column describes the direction
of the seed mutation: under the mutation in direction $(s, i)$, the cluster variable $x_{s i}$ is being transformed. 
It should also be understood that each mutation is followed by the permutation of variables
with indices $(0,i)$ and $(1,i)$ in the new cluster, which results in permuting columns and rows
$2i-1$ and $2i$ in the matrix obtained via the corresponding matrix mutation. In particular,
if  $\x(\varepsilon')$ is obtained
from $\x(\varepsilon)$ via the cluster transformation in direction $(s, i)$, then $\x(\varepsilon)$ is obtained
from  $\x(\varepsilon')$ via the cluster transformation in direction $(1-s, i)$.

\begin{center}
\begin{tabular}{| l | c | r |}
  \hline
  $\varepsilon$ & Direction &  $\varepsilon'$ \\
    \hline 
  $\varepsilon_{i}=0, \varepsilon_{i+1}= 2$ & $(1,i)$ & $\varepsilon'_{i}=1, \varepsilon'_{i+1}= 1$ \\ 
  \hline
  $\varepsilon_{i}=2, \varepsilon_{i+1}= 0$ & $(0,i)$ & $\varepsilon'_{i}=1, \varepsilon'_{i+1}= 1$ \\ 
  \hline
  $\varepsilon_{i}=1, \varepsilon_{i+1}= 0$ & $(0,i)$ & $\varepsilon'_{i}=0, \varepsilon'_{i+1}= 1$ \\ 
  \hline
   $\varepsilon_{i}=2, \varepsilon_{i+1}= 1$ & $(0,i)$ & $\varepsilon'_{i}=1, \varepsilon'_{i+1}= 2$ \\ 
  \hline
  $\varepsilon_{n-1}= 0$ & $(0,n-1)$ & $\varepsilon'_{n-1}=1$ \\ 
  \hline
  $\varepsilon_{n-1}= 1$ & $(1,n-1)$ & $\varepsilon'_{n-1}=2$ \\ 
  \hline
\end{tabular}
\end{center}

We will only provide justification for rows one and five of the table. The remaining cases can be treated similarly. If $\varepsilon_{i}=0, \varepsilon_{i+1}= 2$, let $\varepsilon'$ be defined
by $\varepsilon'_{i}=1, \varepsilon'_{i+1}= 1$ and $\varepsilon'_{j}=\varepsilon_{j}$ for $j\ne i, i+1$
Then it is easy to check that the matrix
mutation in the direction $(1,i)$ followed by the permutation of rows and columns $2 i-1$ and $2 i$ 
transforms  $B(\varepsilon)$ into $B(\varepsilon')$. Note also that when $\varepsilon$ is replaced by
$\varepsilon'$, the corresponding sequence $\varkappa=(\varkappa_i)_{i=1}^n$ transforms into a sequence $\varkappa'$ that differs
from $\varkappa$ only in the component $\varkappa'_i=\varkappa_i - 1$. This means, that $\x'=\x(\varepsilon')$ differs from
$\x(\varepsilon)$ only in components $x_{0i}(\varepsilon')=\DDelta_i^{(\varkappa_i - 1)} = \bar x_{1i}(\varepsilon)$ (cf. \eqref{30i} in Case 3) and $x_{1i}(\varepsilon')=\DDelta_i^{(\varkappa_i)} = x_{0 i}(\varepsilon)$. Thus, we see that the seed mutation
in direction $(1,i)$ of the initial seed of $\A_\varepsilon$ transforms it into a seed equivalent to
the initial seed of $\A_{\varepsilon'}$. 

Now consider the case $\varepsilon_{n-1}= 0$, $\varepsilon'_{n-1}=1$. Then  $\varkappa_{n-1} = \varkappa_{n} -1$, $\varkappa_{n-1}' = \varkappa_{n-1} -1$ and $\varkappa_n' = \varkappa_n -1$. The fact that the matrix
mutation in direction $(1,n-1)$ followed by the permutation of rows and columns $2 n-1$ and $2 n$ 
transforms  $B(\varepsilon)$ into $B(\varepsilon')$ becomes easy to check once we recall that, by (\ref{kappa}),
$n -1 - \varkappa_n$ is always nonnegative. As was shown in Case 11 above, 
$\bar x_{1n-1}(\varepsilon)= \DDelta_{n-1}^{(\varkappa_n - 2)} = \DDelta_{n-1}^{(\varkappa'_{n - 1})}=x_{0 n-1}(\varepsilon')$. Also $\bar x_{0n-1}(\varepsilon)= \DDelta_{n-1}^{(\varkappa_{n-1})} = \DDelta_{n-1}^{(\varkappa'_{n - 1}+1)}=x_{1 n-1}(\varepsilon')$, which completes the check.

To complete the proof of the lemma, it suffices to show that, for any $\varepsilon'$, the seed  
$\Sigma(\varepsilon')$ is a seed in $\A_{\varepsilon^{(0)}}$ for $\varepsilon^{(0)}= (2,0,\ldots, 0)$. 
This can be done by induction
on $\sum_{i=2}^{n-1} \varepsilon'_i$. Indeed, if $\varepsilon'_{n-1}\ne 0$, then   
$\Sigma(\varepsilon')$ can be obtained via a single mutation from   
$\Sigma(\varepsilon)$, where $\varepsilon$ differs from $\varepsilon'$ only in the $(n-1)$st component: $\varepsilon_{n-1}=\varepsilon'_{n-1}-1$ (see the last two rows of the above table). Otherwise, if $i\in [2,  n-2] $ is the largest index
such that $\varepsilon'_{i}\ne 0$, then, using the table again, we see that 
$\Sigma(\varepsilon')$ can be obtained via a sequence of mutations from   
$\Sigma(\varepsilon)$, where $\varepsilon= (\varepsilon'_1,\ldots, \varepsilon'_{i-1}, \varepsilon'_{i}-1, 0, \ldots, 0)$. The intermediate  transformations of the $n$-tuple $\varepsilon$ in this case are
$(\varepsilon'_1,\ldots, \varepsilon'_{i-1}, \varepsilon'_{i}-1, 0, \ldots, 0, 1,0)$, $(\varepsilon'_1,\ldots, \varepsilon'_{i-1}, \varepsilon'_{i}-1, 0, \ldots, 1, 0, 0)$, $\ldots$, $(\varepsilon'_1,\ldots, \varepsilon'_{i-1}, \varepsilon'_{i}-1, 1,0,  \ldots, 0,0)$.
\end{proof}

The first statement of Theorem~\ref{clusterrat} follows immediately.
We can now drop the dependence on $\varepsilon$ in the cluster algebra $\A_\varepsilon$ and denote it 
simply by $\A$.

\bl
\label{specvar}
For any $j\in [1, n-1]$, $k \in \Z$, the function
\be
\label{specclust}
x(j,k)= \DDelta_j^{(k)} x_{2n}^{\max(0, k+ j +1 - 2n)}
\ee
is a cluster variable in $\A$.
\el

\begin{proof} Consider the cluster $\x=\x(\varepsilon)$ that corresponds to $\varepsilon=(2, 0, \ldots, 0)$. 
In this case $x_{2j-1}=\DDelta_j^{(j-1)}$, $j\in [1,n]$, and
$x_{2j}=\DDelta_j^{(j)}$, $j\in [1, n-1]$.
The matrix $B(\varepsilon)$ can be conveniently represented by a planar graph $\Gamma$, whose vertices are represented by
nodes of a $2\times n$ rectangular grid. Vertices in the top row (listed left to right) correspond to cluster
variables $x_1, x_3, \ldots, x_{2n-1}$, and vertices in the bottom row correspond to cluster
variables $x_2, x_4, \ldots$, $x_{2n}$. We will label the $j$th vertex  in the $s$th row by  $(s,j)$, $s=0,1$, $j\in [2,n]$
($s =0$ corresponds to the top row, and $s=1$ to the bottom row).
In accordance with (\ref{bepsilon}), $\Gamma$ has  edges $(i,j) \to (s,j-1)$ for $s=0,1$ and any $j\in [2, n-1]$, edges 
$(0,n) \to (0,n-1)$,  $(1,n-1) \to (1,n)$, $(1,n-1) \to (0, n)$, $(1,n) \to (0, n)$ and
double edges $(0,j) \to (1,j)$ for $j\in [1, n-1]$ and $(1,j) \to (0,j+1)$ for $j\in [1, n-2]$: 
\be\label{ratgraph}
\begin{xy}
*!C\xybox{
\xymatrix{
\bullet\ar@{=>}[d]&\bullet\ar[l]\ar@{=>}[d]  &\bullet\ar[l]\ar@{=>}[d] 
&\bullet\ar[l]|{{\phantom{.}\cdots\phantom{.}}}\ar@{=>}[d] &\bullet\ar[l]\\
\bullet\ar@{=>}[ur] & \bullet\ar[l]\ar@{=>}[ur] & \bullet\ar[l]\ar@{=>}[ur]|{\stackrel{\cdots}{\cdots}}     
&\bullet\ar[l]|{\ \cdots\ }\ar[ur]\ar[r]&\circ\ar[u]
} }\end{xy}
\ee

We marked the vertex that corresponds to the stable variable $x_{2n}$ differently, as it plays a special
role in what follows. In particular, we will occasionally perturb a two-row structure of transformations of the graph $\Gamma$ by ``moving around'' the white vertex. Note also that, in view of the definition 
(\ref{specclust}),   the cluster variable associated with the vertex $(s, j)$ of $\Gamma$ is
$x(j, s+j -1)$ for $j\in [1, n-1]$.

Denote by $T_p$ the cluster transformation in direction $p$. Let us consider the result of the composition 
$$
T = T_{2n-2}\circ T_{2n-4}\circ\cdots\circ T_4\circ T_2 \circ T_{2n-3}\circ\cdots\circ T_3\circ T_1.
$$
An application of $T_1$ transforms $x_1=H_0$ into $\tilde x_1=\frac{1}{H_0} ( H_1^2 + \DDelta_2^{(1)}) = H_2$
and the graph $\Gamma$ into 
$$
\begin{xy}
*!C\xybox{
\xymatrix{
\bullet\ar[r]&\bullet\ar@{=>}[d]  &\bullet\ar[l]\ar@{=>}[d] &\bullet\ar[l]|{\ \cdots\ }\ar@{=>}[d] &\bullet\ar[l]\\
\bullet\ar@{=>}[u]& \bullet\ar[l]\ar@{=>}[ur] & \bullet\ar[l]\ar@{=>}[ur]|{\stackrel{\cdots}{\cdots}} 
&\bullet\ar[l]|{\ \cdots\ }\ar[ur]\ar[r]&\circ\ar[u]
} }\end{xy}
$$
Next, an application of $T_3$ transforms $x_3$ into $\tilde x_3=\DDelta_2^{(3)}$ (here we use (\ref{HankJac}) with $i=l=2$)
and the graph $\Gamma$ into 
$$
\begin{xy}
*!C\xybox{
\xymatrix{
\bullet\ar@{=>}[dr]&\bullet\ar[l]\ar[r]  &\bullet\ar@{=>}[d] &\bullet\ar[l]|{\ \cdots\ }\ar@{=>}[d] &\bullet\ar[l]\\
\bullet\ar@{=>}[u]& \bullet\ar[l]\ar@{=>}[u] & \bullet\ar[l]\ar@{=>}[ur]|{\stackrel{\cdots}{\cdots}}
&\bullet\ar[l]|{\ \cdots\ }\ar[ur]\ar[r]&\circ\ar[u]
} }\end{xy}
$$
Continuing in the same fashion and using on the $j$th step relation (\ref{HankJac}) with $i=l=j$, we conclude that an application of $T_{2n-3}\circ\cdots\circ T_3\circ T_1$ to the initial cluster transforms $\Gamma$ into 
$$
\begin{xy}
*!C\xybox{
\xymatrix{
\bullet\ar@{=>}[dr]&\bullet\ar[l]\ar@{=>}[dr] &\bullet\ar[l] \ar@{=>}[dr]|{\stackrel{\cdots}{\cdots}}
&\bullet\ar[l]|{\ \cdots\ }\ar[r] &\bullet\ar[dl]\\
\bullet\ar@{=>}[u]& \bullet\ar[l]\ar@{=>}[u] & \bullet\ar[l]\ar@{=>}[u]&\bullet\ar[l]|{\ \cdots\ }\ar@{=>}[u]\ar[r]&\circ\ar[u]
} }\end{xy}
$$
 with the variable $x_{2i-1}$ replaced with $\tilde x_{2i-1}=\DDelta_i^{(i+1)}$ for all $i\in [1,n-1]$.
Similarly, the subsequent application of $T_{2n-4}\circ\cdots\circ T_4\circ T_2$ transforms $\Gamma$ into
$$
\begin{xy}
*!C\xybox{
\xymatrix{
\bullet\ar@{=>}[d]
&\bullet\ar[l]\ar@{=>}[d]  &\bullet\ar[l]\ar@{=>}[d]&\bullet\ar[l]|{\ \cdots\ }\ar@{=>}[d] 
&\bullet\ar[l]\ar[r] &\bullet\ar[dl]\\
\bullet\ar@{=>}[ur] & \bullet\ar[l]\ar@{=>}[ur] & \bullet\ar[l]\ar@{=>}[ur]|{\stackrel{\cdots}{\cdots}}
&\bullet\ar[l]|{\ \cdots\ }\ar[r]&\bullet\ar@{=>}[u]\ar[r]&\circ\ar[u]
} }\end{xy}
$$
and replaces $x_{2i}$ with $\tilde x_{2i}=\DDelta_i^{(i+2)}$ for all $i\in [1,n-2]$. Finally, $T_{2n-2}$ transforms $x_{2n-2}=\DDelta_{n-1}^{(n-1)}$ into 
\begin{multline*}
\tilde x_{2n-2} = \left (\DDelta_{n-1}^{(n-1)}\right )^{-1} \left ( x_{2n}  \left (\DDelta_{n-1}^{(n)}\right )^{2}  +\DDelta_{n-2}^{(n)}\DDelta_{n}^{(n-1)}\right )\\ 
=\left (\DDelta_{n-1}^{(n-1)}\right )^{-1} x_{2n}  \left (  \left (\DDelta_{n-1}^{(n)}\right )^{2}  +\DDelta_{n-2}^{(n)}\DDelta_{n}^{(n)}\right ) = x_{2n} \DDelta_{n-1}^{(n+1)},
\end{multline*}
where we used (\ref{shift}). The corresponding transformation of the graph $\Gamma$ is
\be
\label{ratgraph1}
\begin{xy}
*!C\xybox{
\xymatrix{
\bullet\ar@{=>}[d]
&\bullet\ar[l]\ar@{=>}[d]  &\bullet\ar[l]\ar@{=>}[d]&\bullet\ar[l]|{\ \cdots\ }\ar@{=>}[d] 
&\bullet\ar[l]\ar@{=>}[d] &\bullet\ar[l]\\
\bullet\ar@{=>}[ur] & \bullet\ar[l]\ar@{=>}[ur] & \bullet\ar[l]\ar@{=>}[ur]|{\stackrel{\cdots}{\cdots}}
&\bullet\ar[l]|{\ \cdots\ }\ar@{=>}[ur]\ar[dr]&\bullet\ar[l]\ar[ur]&\\
&&&&\circ\ar[u]&
} }\end{xy}
\ee
To  summarize, $T$ results in the
the transformation $x(j,s+j -1)\mapsto x(j,i+j +1)$ for $s=0,1$, $j\in [1, n-1]$, and in replacing the initial graph $\Gamma$
(see (\ref{ratgraph})) with $T(\Gamma)$ (see (\ref{ratgraph1})). Observe also that the subgraphs of $\Gamma$ and $T(\Gamma)$ spanned by black vertices coincide.

Arguing in exactly the same fashion, we deduce that for $r=1,\ldots, n-2$, an application of $T^r$ results
in a cluster $T^r(\x)$ with the corresponding graph $T^r(\Gamma)$ such that (i) the  subgraphs of $\Gamma$ and $T^r(\Gamma)$ 
spanned by black vertices coincide and (ii) the white vertex is connected by simple edges to vertices $(1, n-r)$, $(1, n-r-1)$ so as to form a cyclically oriented triangle. In particular, the graph associated with $T^{n-2}(\x)$ is
$$
\begin{xy}
*!C\xybox{
\xymatrix{
\bullet\ar@{=>}[d]
&\bullet\ar[l]\ar@{=>}[d]  &\bullet\ar[l]\ar@{=>}[d]&\bullet\ar[l]|{\ \cdots\ }\ar@{=>}[d] 
&\bullet\ar[l]\ar@{=>}[d] &\bullet\ar[l]\\
\bullet\ar@{=>}[ur]\ar[dr] & \bullet\ar[l]\ar@{=>}[ur] & \bullet\ar[l]\ar@{=>}[ur]|{\stackrel{\cdots}{\cdots}}
&\bullet\ar[l]|{\ \cdots\ }\ar@{=>}[ur]&\bullet\ar[l]\ar[ur]&\\
&\circ\ar[u]&&&&
} }\end{xy}
$$
Furthermore, the cluster variable in $T^r(\x)$ associated with the vertex $(s,j)$,
$s=0,1$, $j\in [1, n-1])$, in $T^r(\Gamma)$ is $x(j,s+j -1 + 2 r)$. This claim relies on repeated applications
of relations
\bea
\nonumber
&x(1, k-1) x(1, k+1) =  x_{2n}^{\delta_{k+2-2n,0}}x(1, k)^2 +  x(2, k), \\
\nonumber
&x(j, k-1) x(j, k+1) =  x_{2n}^{\delta_{k+j+1-2n,0}}x(j, k)^2 + x(j-1, k) x(j+1, k),\quad j\in[2,n-2], \\
&x(n-1, k-1) x(n-1, k+1) =  x_{2n}^{\delta_{k-n,0}}x(n-1, k)^2 +  x_{2n}^{\max(0,k+1-n)} x(n-2, k-1) \DDelta_n^{(n-1)},
\nonumber
\eea
which, in turn, follow easily from (\ref{specclust}), (\ref{HankJac}), (\ref{shift}).

The same pattern of transformations for cluster variables
remains valid also for $r\ge n-1$. However,  the graph associated 
 with  $T^{n-1}(\x)$   has a form
$$
\begin{xy}
*!C\xybox{
\xymatrix{
&\bullet\ar@{=>}[d]
&\bullet\ar[l]\ar@{=>}[d]  &\bullet\ar[l]\ar@{=>}[d]&\bullet\ar[l]|{\ \cdots\ }\ar@{=>}[d] 
&\bullet\ar[l]\ar@{=>}[d] &\bullet\ar[l]\\
\circ\ar[r]&\bullet\ar@{=>}[ur] & \bullet\ar[l]\ar@{=>}[ur] & \bullet\ar[l]\ar@{=>}[ur]|{\stackrel{\cdots}{\cdots}}
&\bullet\ar[l]|{\ \cdots\ }\ar@{=>}[ur]&\bullet\ar[l]\ar[ur]&
} }\end{xy}
$$
and the graph associated 
with  $T^r(\x)$  for $r > n-1$ has a form
$$
\begin{xy}
*!C\xybox{
\xymatrix{
&\bullet\ar@{=>}[d]
&\bullet\ar[l]\ar@{=>}[d]  &\bullet\ar[l]\ar@{=>}[d]&\bullet\ar[l]|{\ \cdots\ }\ar@{=>}[d] 
&\bullet\ar[l]\ar@{=>}[d] &\bullet\ar[l]\\
\circ\ar[ur]^{b_r}&\bullet\ar[l]^{a_r}\ar@{=>}[ur] & \bullet\ar[l]\ar@{=>}[ur] & \bullet\ar[l]\ar@{=>}[ur]|{\stackrel{\cdots}{\cdots}}
&\bullet\ar[l]|{\ \cdots\ }\ar@{=>}[ur]&\bullet\ar[l]\ar[ur]&
} }\end{xy}
$$
where multiplicities $a_r, b_r$ are given by $a_r = 2(r-n)+1$, $b_r= 2 (r-n) +2$. Thus we have shown that $x(j, l+ j-1)$ is a cluster variable in $\A$ for any $j\in [1,n-1]$, and $l \ge 0$.

To recover $x(j, 1- j - l)$ for  $j\in [1,n-1]$, $l \ge 0$, we act in a similar way, starting with the cluster corresponding
to $\varepsilon=(2, 2, \ldots, 2, 0)$ and repeatedly applying a composition of cluster transformations
$$
T_{2n-3}\circ\cdots\circ T_3\circ T_1\circ T_{2n-2}\circ\cdots\circ T_4\circ T_2.
$$
Thus, $x(j, k)$ is a cluster variable in $\A$ for $k \in \Z \setminus [2-j, j-2]$. 
To complete the proof, it suffices to notice that by~\eqref{kappa}, the range of possible values for $\varkappa_j$ is $[1-j, j-1]$,
 and thus  for any $k \in [2-j, j-2]$ there exists a cluster $\x(\varepsilon)$ given by~\eqref{initclust} such that $x(j, k)$ is one of its variables. Therefore,  this variable belongs to $\A$ by Lemma~\ref{Lem4.2}.
\end{proof}

Since $x(1,k)=H_k$ for $k\leq 2n - 2$, the following statements readily apparent from Lemma~\ref{specvar}.

\begin{coroll} 
\label{corH}
For any $k\leq 2n - 2$, $H_k$ is a cluster variable in $\A$. 
\end{coroll}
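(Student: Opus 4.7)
The plan is to read off the corollary immediately from Lemma \ref{specvar} with $j=1$. First I would unpack the definition \eqref{ddet}: the $1\times 1$ Hankel matrix that gives $\DDelta_1^{(l)}$ has single entry $H_{1+1+l-1-1}=H_l$, so $\DDelta_1^{(l)} = H_l$ for every $l\in\Z$. Substituting $j=1$ in the expression \eqref{specclust} for $x(j,k)$ therefore yields $x(1,k) = H_k\, x_{2n}^{\max(0,k+2-2n)}$.

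Second, I would observe that the exponent $\max(0, k+2-2n)$ vanishes precisely when $k\le 2n-2$, so in this range $x(1,k) = H_k$. Since $x(1,k)$ is a cluster variable of $\A$ by Lemma \ref{specvar}, the same is true of $H_k$. There is no real obstacle here: the content of the corollary is entirely absorbed by Lemma \ref{specvar}, with the only thing to check being the trivial identification $\DDelta_1^{(k)}=H_k$ and the fact that the stable-variable exponent in \eqref{specclust} is $0$ exactly on the asserted range of $k$.
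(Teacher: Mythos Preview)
Your proposal is correct and matches the paper's approach exactly: the paper simply observes that $x(1,k)=H_k$ for $k\le 2n-2$ and invokes Lemma~\ref{specvar}. You have merely unpacked the identification $\DDelta_1^{(k)}=H_k$ and the vanishing of the stable-variable exponent in slightly more detail than the paper bothers to write out.
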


To prove the second statement of Theorem~\ref{clusterrat}, we would like to apply Proposition~\ref{lostprop} in the situation when $V=\RR_n$, $\A$ is the cluster algebra discussed above and $\x$ is given by~\eqref{initclust}. Clearly $\RR_n$ is Zariski open in $\C^{2n}$, as the complement to the union of hypersurfaces $\DDelta_n^{(n-1)}=0$ and $p_0=0$. Condition (ii) is satisfied by construction, and condition (iii) follows from Cases 1--12 discussed above. It remains to check condition (i).

Observe that the ring of regular functions on $\RR_n$ is generated by $2(n+1)$ functions $p_0,\dots, p_{n-1}$, $q_0,\dots, q_{n-1}$, $p_0^{-1},
 (\DDelta_n^{(n-1)})^{-1}$. Clearly, the last two generators belong to $\A^*$. Recall that coefficients $p_i$ satisfy
  relations~\eqref{p-h}. These relations for $k\in [-2,n-3]$ provide a system of linear equations, and by Corollary~\ref{corH}, 
the coefficients of this system belong to $\A^*$. The determinant of the system is $\DDelta_n^{(n-3)}=\DDelta_n^{(n-1)}/p_0^2$, so it does not vanish on $\RR_n$. Therefore, coefficients $p_0,\dots, p_{n-1}$ belong to $\A^*$. Finally, coefficients $q_0,\dots, q_{n-1}$
belong to $\A^*$ due to relations~\eqref{q-h} that involve $p_0,\dots, p_{n-1}$ and $H_k$ for $k<n$.
\end{proof}

\subsection{}\label{difcon}
The cluster algebra $\A$ built above is tightly connected to the cluster algebra studied in \cite{kedem, dk}. We denote the latter
$\A_2$, since to get it from $\A$ one has to fix the values of both stable variables $x_{2n-1}$ and $x_{2n}$ at $1$. Another cluster
algebra, an intermediate between $\A$ and $\A_2$, is obtained by fixing the value of $x_{2n}$ at $1$; it is denoted $\A_1$.

The exchange matrix of $\A_2$ is  obtained
from $\tilde B(\varepsilon)$ by deleting the last two rows. If we take $\varepsilon=(2,1,\dots,1,0)$ and rearrange the cluster
variables as $x_{01},\dots,x_{0 n-1},x_{11},\dots, x_{1 n-1}$, the exchange matrix will be given by
$$
\left( \begin{array}{cc}
0 & -C\\
C & 0
\end{array}
\right),
$$
where $C$ is the Cartan matrix for $A_{n-1}$. This gives precisely the initial cluster considered in \cite{kedem,dk}. Other clusters
related to $Q$-systems (in what follows we call them {\it $Q$-clusters\/}) are obtained from the initial one by using the exchange relation, which is identical to~\eqref{HankJac}. It is shown in Lemma 1.3 in \cite{dk} that
$Q$-clusters correspond bijectively to {\it Motzkin paths}, that is, integer sequences
$\{m_1,\dots,m_{n-1}\}$ such that $|m_j-m_{j+1}|\le 1$. It follows immediately from~\eqref{kappa} that $\{\varkappa_1,\dots,\varkappa_{n-1}\}$ is a Motzkin path starting at $0$. It is easy to check that this gives a bijection between 
$Q$-clusters corresponding to Motzkin paths starting at $0$ and clusters $\x(\varepsilon)$ studied above: the former are
truncations $\x_2(\varepsilon)$ obtained from
$\x(\varepsilon)$ by deleting the stable coordinates. Any other Motzkin path is a translate
of a Motzkin path starting at $0$. The corresponding $Q$-clusters are described by the following statement.

\bl \label{truncat}
Let $\x_1(\varepsilon)=\left((\DDelta_i^{(\varkappa_i)}, \DDelta_i^{(\varkappa_i+1)})_{i=1}^{n-1},\DDelta_n^{(n-1)}\right)$ 
be a cluster in $\A_1$ obtained by the truncation of $\x(\varepsilon)$, and $r\in\Z$.

{\rm (i)} The $r$-shift  
$\x_1^r(\varepsilon)=\left((\DDelta_i^{(\varkappa_i+r)}, \DDelta_i^{(\varkappa_i+r+1)})_{i=1}^{n-1},\DDelta_n^{(n-1+r)}\right)$ 
is a cluster in $\A_1$, and its exchange matrix coincides with
that of $\x_1(\varepsilon)$.

{\rm (ii)}
Let $\x_2(\varepsilon)=(\DDelta_i^{(\varkappa_i)}, \DDelta_i^{(\varkappa_i+1)})_{i=1}^{n-1}$ be the further 
truncation of $\x_1(\varepsilon)$,
then its $r$-shift  $\x_2^r(\varepsilon)=(\DDelta_i^{(\varkappa_i+r)}, \DDelta_i^{(\varkappa_i+r+1)})_{i=1}^{n-1}$ is a $Q$-cluster corresponding to the Motzkin path $\{\varkappa_1+r,\dots,\varkappa_{n-1}+r\}$ and its exchange matrix coincides with
that of $\x_2(\varepsilon)$.
\el

\begin{proof} (i) For $r=1$ and $\varepsilon=(2,0,\dots,0)$ the proof consists in an application of $T_{2n-3} \circ \cdots \circ T_1$
to the graph $\Gamma$ shown on~\eqref{ratgraph} with the white vertex deleted (see the proof of Lemma~\ref{specvar}, and  take 
into an account that $x_{2n}=1$ implies via~\eqref{shift} that $\DDelta_n^{(n-1+r)}= \DDelta_n^{(n-1)}$ for any $r\in \Z$).
To extend this results to any other value  $\varepsilon'$ it suffices to use the cluster transformation taking $\x(\varepsilon)$
to $\x(\varepsilon')$. The case $r>1$ follows by induction. The case $r=-1$ is treated similarly to the case $r=1$ with
$T_{2n-3} \circ \cdots \circ T_1$ replaced by $T_2 \circ \cdots \circ  T_{2n-2}$, and the case $r<-1$ follows by backward induction.

(ii) Follows immediately from (i).
\end{proof}


Consequently, all cluster variables in all $Q$-clusters ($R_{\alpha,m_\alpha}$ in the notation of \cite{dk}) 
form a subset of $\{ x_t(j,k),\ j\in [1,n-1],\ k\in \Z\}$, where $x_2(j,k)$
are obtained from $x(j,k)$ defined in~(\ref{specclust}) by setting both stable variables to~$1$. The correspondence is given by
$R_{\alpha,m_\alpha}\leftrightarrow x_2(\alpha,m_\alpha)$.

We conclude this section with a proposition that, in light of the above fact, 
implies the central positivity result (Theorem 9.15) in \cite{dk}.

\bp
\label{difra}
{\rm (i)} 
For any $\varepsilon$ and any $j\in [1,n-1]$, $k\in \Z$, $x(j,k)$ is a Laurent polynomial in $\x(\varepsilon)$ with non-negative
integer coefficients.

 {\rm (ii)} 
For any $\varepsilon$ and any $j\in [1,n-1]$, $k, r\in \Z$, $x_1(j,k)=x(j,k)|_{x_{2n}=1}$ is a Laurent polynomial in $\x^r_1(\varepsilon)$ with non-negative integer coefficients.
\ep

\begin{proof} (i)  Define parameters $c_i, d_i$ by (\ref{cdshort}). Pick a pair $(u,v)$ of Coxeter elements that correspond to $\varepsilon$ and consider the element $X\in G^{u,v}$ defined by (\ref{factorI}) with factorization parameters $d_i$ and  $c_i^-=c_i$,
 $c_i^+=1$. Then $H_i = H_0 h_i(X)$. This means that  for any $j\in [1,n-1]$, $k\in \Z$, 
$x(j,k)= H_0^j x_{2n}^{\max(0, k+ j +1 - 2n)}  \Delta_j^{(k)} (X)$, where by
$\Delta_j^{(k)} (X)$ we mean the determinant defined in (\ref{det}) built from $h_i(X)$.
By Corollary~\ref{corshift}, $\Delta_j^{(k)} $ is the 
product of a Laurent monomial in variables from $\x(\varepsilon)$ 9with coefficient~$1$) and the minor $\left ( X^{k-\varkappa_j} \right )_{[1,j]}^{[1,j]}$. If $k-\varkappa_j \ge 0$, then, by Lindstr\"om's lemma, this minor is equal to 
the sum of products of  path weights over all collections of non-intersecting paths leading from the $j$ lowest sources to 
the $j$ lowest sinks in the network obtained by concatenating $k-\varkappa_j$ copies of the network $N_{u,v}$. 
Thus $\left ( X^{k-\varkappa_j} \right )_{[1,j]}^{[1,j]}$ is a polynomial in factorization parameters $c_i, d_i$ with non-negative integer coefficients, 
and the claim follows, since cluster variables and factorization parameters are connected by a monomial transformation
with no coefficients. 
On the other hand, if $k-\varkappa_j < 0$, then, by a well-known determinantal identity, 
$\left ( X^{k-\varkappa_j} \right )_{[1,j]}^{[1,j]}= (\det X)^{k-\varkappa_j} \left ( X^{\varkappa_j-k} \right )_{[j+1,n]}^{[j+1,n]}= (d_1\cdots d_n)^{k-\varkappa_j} \left ( X^{\varkappa_j-k} \right )_{[j+1,n]}^{[j+1,n]}$, and 
the previous argument applies.

(ii) By Lemma \ref{truncat}(i), $\x_1^r(\varepsilon)$ is indeed a cluster in $\A_1$, and its exchange matrix corresponds to $\tilde 
B(\varepsilon)$. Define parameters $c^{(r)}_i, d^{(r)}_i$ by (\ref{cdshort}) with every Hankel determinant $\DDelta_i^{(l)}$  replaced by $\DDelta_i^{(l+r)}$. 
Pick a pair $(u,v)$ of Coxeter elements that correspond to $\varepsilon$ and consider the element $X\in G^{u,v}$ defined by (\ref{factorI}) with factorization parameters $d_i= d_i^{(r)}$ and  $c_i^-=c^{(r)}_i$, $c_i^+=1$. Then $H_{i +r}= H_r h_i(X)$ for $i\in [0,\ldots, 2n-1]$. Recursion (\ref{p-h}) together with Remark \ref{handH} imply that, in fact, $H_{i +r}= H_r h_i(X)$ for all $i\in \Z$. Therefore,
$x(j,k)= H_r^j x_{2n}^{\max(0, k+ j +1 - 2n)}  \Delta_j^{(k-r)} (X)$, and the rest of the proof is identical to (i).
\end{proof}

\br \label{tribute}
{\rm (i) In fact, we can refine Proposition~\ref{difra}(i) and prove Laurent positivity of $x(j,k)$ with respect to shifted clusters
as well. However, this proof needs additional tools in cluster algebra theory, and will be published elsewhere.

(ii) If factorization parameters in (\ref{factorI}) are positive, then the
matrix $X$ is totally nonnegative, and so are matrices $X^k$  for $k=1, 2,\ldots$ and $J X^k J$ with $J=\diag((-1)^i)_{i=1}^n$ for 
$k =-1, -2, \ldots$. This indicates, in particular, a connection between $Q$-systems and
totally nonnegative matrices and their network interpretation. This connection is explored  in \cite{DK2}, Section 7.

(iii) The quantization of the cluster algebra considered in this subsection is the subject of the forthcoming paper \cite{BK}.}
\er

\section{Coxeter--Toda lattices}

\subsection{}
The goal of this section is to establish a connection between the cluster
algebra $\A$ defined above and transformations of Coxeter--Toda flows.
First, consider the  Toda hierarchy defined by (\ref{Lax_intro}). Equations
on $X$ induce an evolution
of the corresponding Weyl function $m(\lambda; X)$, which can be most
conveniently described in terms of its Laurent coefficients $h_i$. The
following proposition is well known in the case of the usual (tridiagonal)
Toda flows.

\bp
\label{mom_evol}
If $X=X(t)$ satisfies the Lax equation {\rm (\ref{Lax_intro})}, then coefficients
$h_i(X)= (X^i e_1, e_1)$
of the Laurent expansion of the Weyl function $m(\lambda; X)$ evolve
according to equations
$$
\ddt h_i(X) = h_{i+k}(X) - h_k(X) h_i(X).
$$
\ep

\begin{proof} If $X$ satisfies (\ref{Lax_intro}), then so does $X^i$. By rewriting $X^k$ as $\pi_+(X^k)+\pi_-(X^k)+\pi_0(X^k)$, we get
\begin{equation*}
\begin{split}
\ddt h_i(X) &=\left ( \left [X^i, -\frac{1}{2} \left ( \pi_+(X^k) -
\pi_-(X^k)\right )\right ] e_1, e_1 \right )\\
&= \frac{1}{2} \left (
X^i\left ( X^k - 2 \pi_+(X^k) - \pi_0(X^k)\right ) e_1, e_1 \right ) -
\frac{1}{2} \left ( \left ( - X^k +2 \pi_-(X^k)  + \pi_0(X^k)\right ) X^i
e_1, e_1 \right )\\
& = \left ( X^{i+k} e_1, e_1 \right ) -\left ( \pi_0(X^k)
e_1, e_1 \right ) \left ( X^i e_1, e_1 \right ) = h_{i+k}(X) - h_k(X)
h_i(X).
\end{split}
\end{equation*}
\end{proof}

Now, let $(u,v)$ be a pair of Coxeter elements. Coxeter--Toda flows on
$G^{u,v} /\HH$ are induced by the restriction of the Toda hierarchy to
$G^{u,v}$. To get a more detailed description of Coxeter--Toda flows, we
choose parameters $c_i=c_i^+ c_i^-$, $d_i$ that correspond to the
factorization (\ref{factorI}) of a generic element in $G^{u,v}$ as
coordinates on the open dense set in  $G^{u,v} /\HH$. Indeed, $c_i, d_i$ are
invariant under conjugation by diagonal matrices (cf. Remark~\ref{invrmrk}) and are
clearly independent as functions on $G^{u,v} /\HH$.

\bl The standard Poisson--Lie structure on $GL_n$
induces the following Poisson brackets
for variables $c_i, d_i$:
\be
\{ c_i, c_{i+1}\} = (\varepsilon_{i+1} -1)  c_i c_{i+1}, \quad \{ d_i, d_j\} = 0,
\quad \{ c_i, d_i\} = -  c_i d_{i}, \quad \{ c_i, d_{i+1}\} =   c_i d_{i+1},
\label{brack_cd}
\ee
and the rest of the brackets are zero.
\el

\begin{proof} In view of Theorem \ref{PSGL}, it is sufficient to compute Poisson
brackets for $c_i, d_i$
induced by Poisson brackets (\ref{taubracks}), \eqref{taubracks0} for face weights of the
network $N_{u,v}^\circ$.
The first equation is an easy consequence of the equality $y_{0i}=c^{-1}_i$,
$i\in [1,  n-1]$,
(cf. (\ref{faceweights})) and Poisson relations for $y_{0i}$ described in
(\ref{taubracks}), \eqref{taubracks0}.

By  (\ref{faceweights}),~\eqref{faceweights0}, $y_{0i} y_{1i} = d_i/d_{i+1}$ for $i\in [0, n-1]$
(here $d_0=1$). Therefore, 
$$
\{\log d_i/d_{i+1},\log d_j/d_{j+1}\} = \{\log y_{0i} y_{1i},\log y_{0j} y_{1j}\},\quad i,j\in [0,n-1], 
$$
which equals the sum of the entries of the
$2\times 2$ block of $\Omega$ in rows $2i+1, 2i+2$ and columns $2j+1,2j+2$. By~\eqref{Omega},
each such block is proportional either to $U$, or to $V_k$, or to $V_k^T$, $k\in [1,n-1]$, given by~\eqref{Xtofaceblocks}.
It is easy to see that the sum of the entries for each of these matrices equals zero, and 
hence $\{d_i/d_{i+1},d_j/d_{j+1}\}=0$ for all $i,j\in[0,n-1]$. In particular, this holds for $i=0$, which can be 
re-written as $\{d_1, d_j/d_{j+1}\}=0$ for all $j\in[0,n-1]$. Taking into account that 
$d_j = d_1 (d_2/d_1) \cdots (d_i/d_{i-1})$, we get the second formula in~\eqref{brack_cd}.

Similarly,
$$
\{ \log c_{i},  \log d_{j+1}/d_{j}\} =\{ \log 1/c_{i},  \log d_j/d_{j+1}\}
 = \{ \log y_{0i} ,  \log y_{0j} y_{1j}\}, 
$$ 
for $i\in [1,n-1]$, $j\in [0,n-1]$,
which equals the sum of the two upper entries of the
$2\times 2$ block of $\Omega$ in rows $2i+1, 2i+2$ and columns $2j+1,2j+2$.
By~\eqref{Omega}, if such a block is nontrivial, it is equal
either to $U$, or to $V_k$, or to $-V_k^T$, $k\in [1,n-1]$, given by~\eqref{Xtofaceblocks}.
Since the sum of the two upper entries equals $2$ for $U$ and $-1$ in the other two cases,   
we conclude that
$\{ \log c_{i},  \log d_{j+1}/d_{j}\} = 2\delta_{i,j}-\delta_{i,j+1} -\delta_{i, j-1}$ for $i\in [1,n-1]$, 
$j\in [0,n-1]$. In particular, for $j=0$ one gets $\{\log c_i, \log d_1\}=-\delta_{i1}$ for $i\in [1,n-1]$.
Re-writing $d_j$ via $d_1$ and $d_{i+1}/d_i$ as before, one gets 
$\{ \log c_{i},  \log d_{j}\} =  - \delta_{i,j} + \delta_{i, j-1}$, $i\in [1,n-1]$, $j\in [1,n]$, which
is equivalent to the last two equations in (\ref{brack_cd}).
\end{proof}

\br{\rm We could have also computed brackets (\ref{brack_cd}) by
specializing general formulas
obtained in \cite{KZ} for Poisson brackets for factorization parameters of
an arbitrary double Bruhat cell
in a standard semisimple Poisson--Lie group.
}
\er

Due to their invariance under conjugation by elements of $\HH$, Hamiltonians
$F_k(X)= \frac{1}{k} \Trace X^k$ of the Toda flows, when restricted
to a Coxeter double Bruhat cell $G^{u,v}$, can be expressed as functions
of $c_i, d_i$, which, in turn, serve as Hamiltonians for Coxeter--Toda flows on
$G^{u,v} /\HH$. The easiest way to write down $F_k$ as a function  of $c_i,
d_i$ explicitly is to observe that $\Trace X^k$ is equal to the sum
of weights of all paths
that start and end at the same level in the planar network obtained by
concatenation of $k$ copies
of $N_{u,v}$. In the case $k=1$, we only need to use $N_{u,v}$ itself, which
leads to the following formula for $F_1$: define $I^-$ and $I^+$ by~\eqref{IL} and denote $I^- \cup I^+ = \{
1=i_1, \ldots, i_m=n\}$, then
\be
\label{F1}
F_1=F_1(c, d) = d_1 + \sum_{l=1}^{k-1} \sum_{j=i_l+1}^{i_{l+1}} \left (  d_j
+ c_{j-1} d_{j-1} + \ldots
c_{j-1} \cdots c_{i_l} d_{i_l} \right ).
\ee
One can use (\ref{F1}), (\ref{brack_cd}) to write equations of the first
Coxeter--Toda flow generated by
$F_1$ on $G^{u,v}/\HH$ as a system of evolution equations for $c_i, d_i$.

\begin{examp}\label{runexCTL}
{\rm
(i) For our running Example \ref{runex}, $I^-\cup I^+=\{1,3,4,5\}$, so~(\ref{F1}) becomes
$$
F_1 = d_1 + d_2 + c_1 d_1 + d_3 + c_2 d_2 + c_2 c_1 d_1 + d_4 + c_3 d_3 +
d_5 + c_4 d_4.
$$

(ii) Let $v=s_{n-1} \cdots s_1$, then $I^-\cup I^+=[1,n]$ and formula
(\ref{F1}) reads
$F_1 (c, d) = d_1 + d_2 + c_1d_1+\ldots + d_n + c_{n-1} d_{n-1}$. If, in
addition, $u=v^{-1}$, then $\varepsilon= ( 2, 0,\ldots, 0)$ and
$F_1$ and (\ref{brack_cd}) generate Hamiltonian equations
$$
\ddt d_i = \{ d_i, F_1\} = \{d_i, c_i d_i + c_{i-1}d_{i-1}\} = d_i ( c_i d_i
- c_{i-1}d_{i-1} ),
$$
$$
\ddt c_i = \{ c_i, F_1\} = \{c_i, d_i + d_{i+1} + c_{i-1}d_{i-1} + c_i d_i +
c_{i+1}d_{i+1}\} =
c_i ( d_{i+1} - d_i  +  c_{i-1}d_{i-1} - c_i d_i  ).
$$
Then a change of variables $r_{2i-1} = d_i$, $i\in [1, n]$, and  $r_{2i} =
c_i d_i$, $i\in [1, n-1]$, results in
the equations of the {\em open Volterra lattice\/}:
$$
\ddt r_i = r_i (r_{i+1} - r_{i-1}),\quad i\in [1, 2n-1]; \ r_{0} = r_{2n}= 0.
$$
Another change of variables, $a_i = c_i d_i^2,\ b_i = d_i + c_{i-1} d_{i-1}$, 
leads to equations of motion
of the Toda lattice that were presented in the introduction. Note that
$a_i, b_i$ are, resp., subdiagonal and diagonal matrix entries in a lower
Hessenberg representative of an element in $G^{u,v}/\HH$ defined by
parameters $c_i, d_i$.

(iii) If
$u=v=s_{n-1} \cdots s_1$, then $\varepsilon=\{2, 1, \ldots, 1,0\}$, and
Hamiltonian equations generated by
$F_1$ and (\ref{brack_cd}) produce the system
$$
\ddt d_i = d_i ( c_i d_i - c_{i-1}d_{i-1} ),\quad 
\ddt c_i =
c_i ( d_{i+1} - d_i  +  c_{i+1}d_{i+1} - c_i d_i  ).
$$
After the change of variables $\tilde c_i = c_i d_i $ this system turns into the
{\em relativistic Toda lattice}
$$
\ddt d_i = d_i ( \tilde c_i  - \tilde c_{i-1} ), \quad
\ddt {\tilde c}_i =
\tilde c_i ( d_{i+1} - d_i  +  \tilde c_{i+1} - \tilde c_{i-1} ).
$$
}
\end{examp}

Proposition \ref{mom_evol} combined with Theorem \ref{invthm} suggests a
method to solve Coxeter--Toda lattices explicitly, following the strategy that
was originally applied in \cite{moser} to the usual Toda lattice. In order
to find a solution with initial conditions $c_i(0), d_i(0)$ to the
Coxeter--Toda equation on $G^{u,v}/\HH$ generated by the Hamiltonian $F_k$,
we first define 
$$
m^0(\lambda)=m(\lambda;X(0))=\sum_{i=0}^\infty \frac{h_i^0}{\lambda^{i+1}}
$$
to be the Weyl function of any representative $X(0)\in
G^{u,v}$  of the element in $G^{u,v}/\HH$  with coordinates $c_i(0), d_i(0)$.
Let $M(\lambda;t) = \sum_{i=0}^\infty {H_i(t)}{\lambda^{-i-1}}$ be the
solution to
a linear system on $\mathcal{R}_n$  described in terms of Laurent
coefficients $H_i(t)$ by
$$
\ddt H_i(t) = H_{i+k} (t), \quad i=0,1,\ldots, 
$$
with initial conditions $H_i(0) = h_i^0$. For $i < 0$, define $H_i(t)$ via~\eqref{p-h}, 
where $(-1)^{n-i} p_i$ are
coefficients of the characteristic polynomial of $X(0)$.

\bp
\label{solveToda} The solution with initial conditions $c_i(0), d_i(0)$ to
the $k$th Coxeter--Toda equation on $G^{u,v}/\HH$ is given by formulas
{\rm (\ref{cd})} with $h_i=h_i(t) = {H_i(t)}/{H_0(t)}$, $i\in \mathbb{Z}$.
\ep

\begin{proof} An easy calculation shows that $h_i=h_i(t) =
{H_i(t)}/{H_0(t)}$, $i\ge 0$, give the solution to the system presented in
Proposition \ref{mom_evol} with initial conditions
$h_i(0)=h_i^0$. Thus the function $m(\lambda,t)=\sum_{i=0}^\infty
{h_i(t)}{\lambda^{-i-1}}$ evolves in the way prescribed by the $k$th Toda
flow and therefore coincides with $m(\lambda;X(t))$, where $X(t)$ is the
solution of (\ref{Lax_intro}) with the initial condition $X(0)$. Since
coefficients of the characteristic polynomial are preserved by Toda flows,
Remark~\ref{handH}(ii) implies that for $i <0$ we also
have $h_i(t) = h_i(X(t))$. Finally, since  the Moser map is
invertible on  $G^{u,v}/\HH$, we see that the system in Proposition \ref{mom_evol} is, in
fact, equivalent to the $k$th Toda flow on $G^{u,v}/\HH$ which completes the
proof.
\end{proof}

We see that for any pair of Coxeter elements $(u,v)$, the Coxeter--Toda flows are
equivalent to the same evolution of Weyl functions. We want to exploit this
fact to construct, for any two pairs $(u, v)$ and $(u', v')$ of Coxeter elements,
a transformation between
$G^{u,v}/\HH$ and $G^{u',v'}/\HH$ that is Poisson and maps the $k$th
Coxeter--Toda flow into the $k$th Coxeter--Toda flow. We call such a
transformation a {\em generalized B\"acklund--Darboux transformation}. The
term ``B\"acklund transformation'' has been used broadly over the years for
any transformation that maps solutions of one nonlinear equation into
solutions of another. To justify
the use of Darboux's name, we recall that traditionally a
B\"acklund--Darboux transformation consists
in interchanging factors in some natural factorization of the Lax operator
associated with a given integrable system. In the case of Coxeter--Toda
flows, the same number and type of elementary factors
appears in the Lax matrix associated with any Coxeter double Bruhat cell.
Hence we  use the term
``generalized B\"acklund--Darboux transformation'' even though in our case,
re-arrangement of factors is accompanied by a transformation of
factorization parameters.

Let us fix two pairs,  $(u, v)$ and $(u', v')$, of Coxeter elements and let
$\varepsilon=(\varepsilon_i)_{i=1}^n$,
$\varepsilon'=(\varepsilon'_i)_{i=1}^n$ be the corresponding $n$-tuples defined 
by~\eqref{eps},~\eqref{epsum}. We construct a map $\sigma_{u,v}^{u',v'}: G^{u,v}/\HH
\to G^{u',v'}/\HH$ using the following procedure.
Consider the cluster algebra $\A$ defined in Section 5. Fix a seed
$\Sigma(\varepsilon)=(\x(\varepsilon), \tilde B(\varepsilon))$ in $\A$, where $\x(\varepsilon)$ is given
by~\eqref{initclust} and $B(\varepsilon)$ by~\eqref{bepsilon}.
Let $T_\varepsilon^{\varepsilon'}$ be the sequence of cluster transformations
defined in the proof of Lemma~\ref{Lem4.2} that transforms
$\Sigma(\varepsilon)$  into the seed $\Sigma(\varepsilon')$. Next, for an element in $G^{u,v}/\HH$ with coordinates $c_i,
d_i$, consider its representative $X\in G^{u,v}$, the corresponding Weyl
function $m(\lambda; X)$ and the sequence of moments $h_i(X)$, $i \in
\mathbb{Z}$. Apply transformation $\tau_{u,v}$ by
assigning values to cluster variables in the cluster
$\x(\varepsilon)$ according to formulas~\eqref{ddet},~\eqref{shorthand},~\eqref{initclust} with $H_i$
replaced by $h_i(X)$. Then apply transformation $T_\varepsilon^{\varepsilon'}$
to $\x(\varepsilon)$ to obtain the cluster $\x(\varepsilon')$. Finally, apply transformation $\rho_{u',v'}$ by using equations~\eqref{cdshort}
with $\varepsilon$ replaced by $\varepsilon'$
and components of $\x(\varepsilon)$ replaced by those of $\x(\varepsilon')$ to
compute parameters $c'_i, d'_i$ that serve
as coordinates of an element in $G^{u',v'}/\HH$. This concludes the
construction of $\sigma_{u,v}^{u',v'}$.

\bt \label{GBD}
The map $\sigma_{u,v}^{u',v'}: G^{u,v}/\HH \to G^{u',v'}/\HH$ is a
birational transformation that preserves the Weyl function,
maps Coxeter--Toda flows on $G^{u,v}/\HH$ into matching Coxeter--Toda flows on
$G^{u',v'}/\HH$ and is Poisson with respect
to Poisson structures on $G^{u,v}/\HH$ and  $G^{u',v'}/\HH$ induced by the
standard Poisson--Lie bracket on $GL_n$.
\et

\begin{proof} Moments $h_j(X)$ are polynomial functions of $c_i, d_i$ for
$i\ge 0$ and rational functions of  $c_i, d_i$ for $i < 0$. Values we assign
to cluster variables in $\x(\varepsilon)$ are thus rational functions of  $c_i,
d_i$. This, combined with the rationality of $T_\varepsilon^{\varepsilon'}$ and
equations~\eqref{cdshort}, shows that the map $\sigma_{u,v}^{u',v'}$ is rational. It
is easy to see that its inverse is $\sigma^{u,v}_{u',v'}$ which implies
birationality. The claim that $\sigma_{u,v}^{u',v'}$ preserves the Weyl function is simply a
re-statement of Lemma~\ref{Lem4.2}, which implies that if clusters $\x(\varepsilon)$ and
$\x(\varepsilon')$ are obtained from a function $M(\lambda) \in \mathcal R_n$
according to ~\eqref{ddet},~\eqref{shorthand},~\eqref{initclust}, then $T_\varepsilon^{\varepsilon'}$
transforms $\x(\varepsilon)$ into $\x(\varepsilon')$. The rest of the statement of
the theorem is a consequence of the invariance of the Weyl function, since
Poisson
structures on $G^{u,v}/\HH$ and  $G^{u',v'}/\HH$  induce the same Poisson
bracket on $\RR_n$ compatible with $\A$
and, by Proposition \ref{mom_evol},  Coxeter--Toda flows generated by
Hamiltonians $F_k$ on $G^{u,v}/\HH$ and  $G^{u',v'}/\HH$  induce the same
evolution
of the Weyl function.

\end{proof}

To illustrate Theorem~\ref{GBD}, in the table below we list elementary generalized
B\"acklund--Darboux transformations that correspond to cluster
transformations from a fixed cluster $\x(\varepsilon)$ into an adjacent cluster
$\x(\varepsilon')$. The table can be viewed in parallel with the table in the
proof of Lemma~\ref{Lem4.2}. Expressions for transformed variables $c'_j$,
$d'_j$ are obtained by combining formulas for cluster transformations with
equations (\ref{cdshort}). Variables that are not listed are left unchanged.

\medskip

{
\offinterlineskip
\tabskip=0pt
\halign{ 
\vrule height2.75ex depth2.25ex width 0.6pt  #\tabskip=0.3em &  
\hfil $#$\hfil  & 
\strut\vrule # & 
\hfil $#$\hfil &
\vrule # &
\hfil #\hfil &
#\vrule &
\hfil #\hfil &
#\vrule width 0.6pt \tabskip=0pt\cr
\noalign{\hrule height 0.6pt}
& \varepsilon & & \varepsilon' &&  Transformation & & Inverse & \cr
\noalign{\hrule}
& \varepsilon_{i}=0 && \varepsilon'_{i}=1 && $ c'_i = \frac{c_i d_i}{d_{i+1}}$ && $c_i = \frac{c'_i
d'_{i+1}}{d'_{i}(1+c'_i)^2}$ &\cr
& \varepsilon_{i+1}= 2 && \varepsilon'_{i+1}=1 &&  $d'_i= \frac{ d_i d_{i+1}}{d_{i+1} + c_i d_i},\  d'_{i+1} = d_{i+1} + c_i d_i$
&& $d_{i} = d'_{i}(1 + c_i'),\ d_{i+1}= \frac{d'_{i+1}}{1 + c'_i}$ &\cr
\noalign{\hrule height 0.6pt}
& \varepsilon_{i}=2 && \varepsilon'_{i}=1 && $c'_i = \frac{c_i d_{i+1}}{d_{i}(1+c_i)^2}, \ c'_{i+1} = c_{i+1} (1+c_i)$ &&
$c_i = \frac{c'_i d'_{i}}{d'_{i+1}}, \ c_{i+1}= \frac{ c'_{i+1} d'_{i+1}}{d'_{i+1} + c'_i d'_i}$&\cr
& \varepsilon_{i+1}= 0 && \varepsilon'_{i+1}=1 && $d'_{i+1} = \frac{d_{i+1}}{1 + c_i }, \ d'_i= d_{i} (1+c_i)$ &&
 $d_{i+1} = d'_{i+1}  + d'_{i} c_i', \ d_{i}= \frac{  d'_i d'_{i+1}}{d'_{i+1}  + d'_{i} c_i' }$ &\cr
\noalign{\hrule height 0.6pt}
& \varepsilon_{i}=1 && \varepsilon'_{i}=0 && $c'_i = \frac{c_i d_{i+1}}{d_{i}(1+c_i)^2}, \quad c'_{i+1}=c_{i+1} ( 1 + c_i)$ &&
$c_i = \frac{c'_i d'_i}{d'_{i+1}},\quad  c_{i+1} = \frac{ c'_{i+1}d'_{i+1}}{d'_{i+1} + c'_i d'_i}$ &\cr
& \varepsilon_{i+1}= 0 && \varepsilon'_{i+1}=1 && $d'_{i} = d_{i}(1 + c_i),\quad d'_{i+1}= \frac{  d_{i+1}}{1 + c_i}$ &&
$d_{i+1} = d'_{i+1} + c'_i d'_i,\quad d_i= \frac{ d'_i d'_{i+1}}{d'_{i+1} + c'_i d'_i}$&\cr
\noalign{\hrule height 0.6pt}
& \varepsilon_{i}=2 && \varepsilon'_{i}=1 && $c'_i = \frac{c_i d_{i+1}}{d_{i}(1+c_i)^2},\quad  c'_{i-1}=c_{i-1} ( 1 + c_i)$ &&
$c_i = \frac{c'_i d'_i}{d'_{i+1}},\quad  c_{i-1} = \frac{ c'_{i-1}  d'_{i+1}}{d'_{i+1} + c'_i d'_i}$&\cr
& \varepsilon_{i+1}= 1 && \varepsilon'_{i+1}=2 && $d'_{i} = d_{i}(1 + c_i),\quad d'_{i+1}= \frac{  d_{i+1}}{1 + c_i}$ &&
$ d_{i+1} = d'_{i+1} + c'_i d'_i,\quad  d_i= \frac{ d'_i d'_{i+1}}{d'_{i+1} + c'_i d'_i}$&\cr
\noalign{\hrule height 0.6pt}
&&&&& $c'_{n-1} = \frac{c_{n-1} d_{n-1}}{d_{n}}$ && $c_{n-1} = \frac{c'_{n-1}
 d'_{n}}{d'_{i}(1+c'_{n-1})^2}$&\cr
& \varepsilon_{n-1}= 0 && \varepsilon'_{n-1}=1 && $d'_{n} =d_{n} + c_{n-1} d_{n-1}$ &&
 $d_{n-1} = d'_{n-1}(1 + c_{n-1}')$&\cr
 &&&&& $d'_{n-1}= \frac{ d_n d_{n-1}}{d_{n} + c_{n-1}d_{n-1}}$ &&$d_{n}= \frac{  d'_{n}}{1 + c'_{n-1}}$&\cr
 \noalign{\hrule height 0.6pt}
  &&&&& $c'_{n-1}= \frac{c_{n-1} d_{n-1}}{d_{n}}$ && $c_{n-1} = \frac{c'_{n-1} d'_{n}}{d'_{n-1}(1 +c'_{n-1})^2}$&\cr
 & \varepsilon_{n-1}= 1 && \varepsilon'_{n-1}=2 && $c'_{n-2} = \frac{ c_{n-2}
  d_{n}}{d_{n} + c_{n-1} d_{n-1}}$ &&  $c_{n-2}=c'_{n-2} ( 1 + c'_{n-1})$&\cr
  &&&&& $d'_{n} = d_{n} + c_{n-1} d_{n-1}$ && $d_{n-1} = d'_{n-1}(1 + c'_{n-1})$&\cr
  &&&&& $d'_{n-1}= \frac{ d_n d_{n-1}}{d_{n} + c_{n-1} d_{n-1}}$ && $d_{n}= \frac{  d'_{n}}{1 +c'_{n-1}}$&\cr
 \noalign{\hrule height 0.6pt} 
}}

\medskip

Elementary generalized B\"acklund--Darboux transformations can be conveniently interpreted in
terms of equivalent transformations of perfect networks introduced in \cite{Postnikov}. The three types of equivalent transformations are shown in Figure~\ref{equitrans}. Instead of trying to describe the
general case, we will provide an example.

\begin{figure}[ht]
\begin{center}
\includegraphics[width=9.0cm]{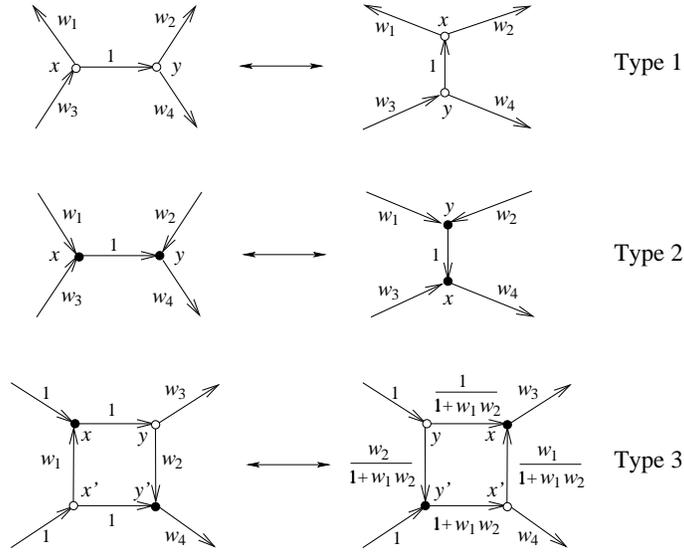}
\caption{Equivalent transformations of perfect networks}
\label{equitrans}
\end{center}
\end{figure}

\begin{examp}\label{runexGBD}
{\rm Consider the network from Example~\ref{runexan}. Recall that $\varepsilon=(2,2,1,0,0)$ 
and set $i=2$. So, $\varepsilon_2=2$ and $\varepsilon_3=1$, which corresponds to the fourth row of the 
above table. The corresponding transformation consists of the following steps:

(i) Type 2 transformation with $x=v_b^+(3)$, $y=v_b^-(3)$ and $w_1=w_4=1$, $w_2=c_2^+$, $w_3=c_3^-$.

(ii) Type 3 transformation with $x=v_b^+(3)$, $y=v_w^-(3)$, $x'=v_w^+(2)$, $y'=v_w^-(2)$ and 
$w_1=c_2^+$, $w_2=c_2^-$, $w_3=d_3$, $w_4=1$.

(iii) Type 1 transformation with $x=v_w^+(2)$, $y=v_w^-(2)$ and $w_1=c_2^+/(1+c_2)$, $w_2=1$,
$w_3=d_2$, $w_4=c_1^-$.

(iv) The gauge group action at $v_b^+(3)$ that takes the triple of weights $(d_3, c_2^+/(1+c_2), 
1/(1+c_2))$ to $(1, d_3c_2^+/(1+c_2), d_3/(1+c_2))$.

\begin{figure}[ht]
\begin{center}
\includegraphics[width=10.0cm]{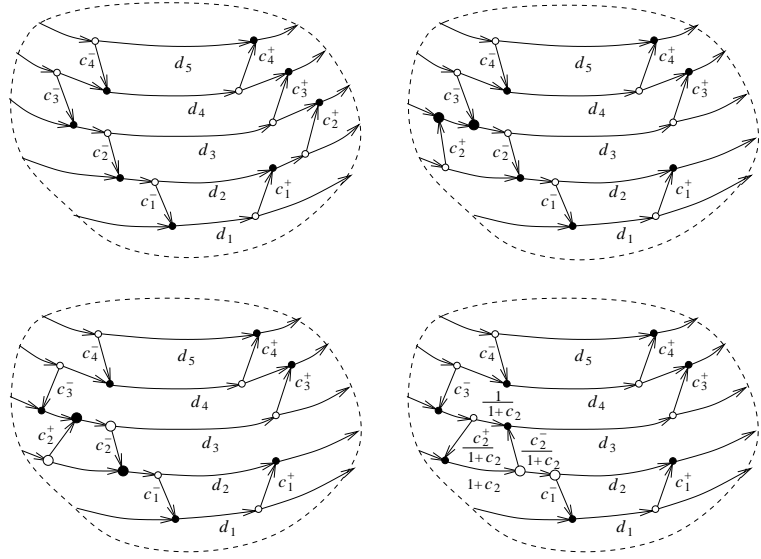}
\caption{Elementary generalized B\"acklund--Darboux transformation: steps (i) and (ii)}
\label{gbd1}
\end{center}
\end{figure}

(v) The gauge group action at $v_w^-(2)$ that takes the triple of weights $(1+c_2, 1, c_1^-)$  to $(1,1+c_2, c_1^-(1+c_2))$.

(vi) The gauge group action at $v_w^+(2)$ that takes the triple of weights $(1+c_2,
d_3c_2^+/(1+c_2), d_2)$ to $(d_2(1+c_2), d_3c_2^+/[d_2(1+c_2)], 1)$.

Thus, at the end we have $(c_2^-)'=c_2^-/(1+c_2)$, $(c_2^+)'=d_3c_2^+/[d_2(1+c_2)]$, and hence
$c_2'=d_3c_2/[d_2(1+c_2)]$. Besides, $(c_1^-)'=c_1^-(1+c_2)$, $c_1^+)'=c_1^+$, and hence
$c_1'=c_1(1+c_2)$. Finally, $d_2'=d_2(1+c_2)$ and $d_3'=d_3/(1+c_2)$. All these expressions coincide with those given in the fourth row of the table.

Transformations of the relevant part of the network during the first two steps are shown in 
Figure~\ref{gbd1}.

Transformations of the relevant part of the network during the remaining four steps are shown in 
Figure~\ref{gbd2}. 

\begin{figure}[ht]
\begin{center}
\includegraphics[width=10.0cm]{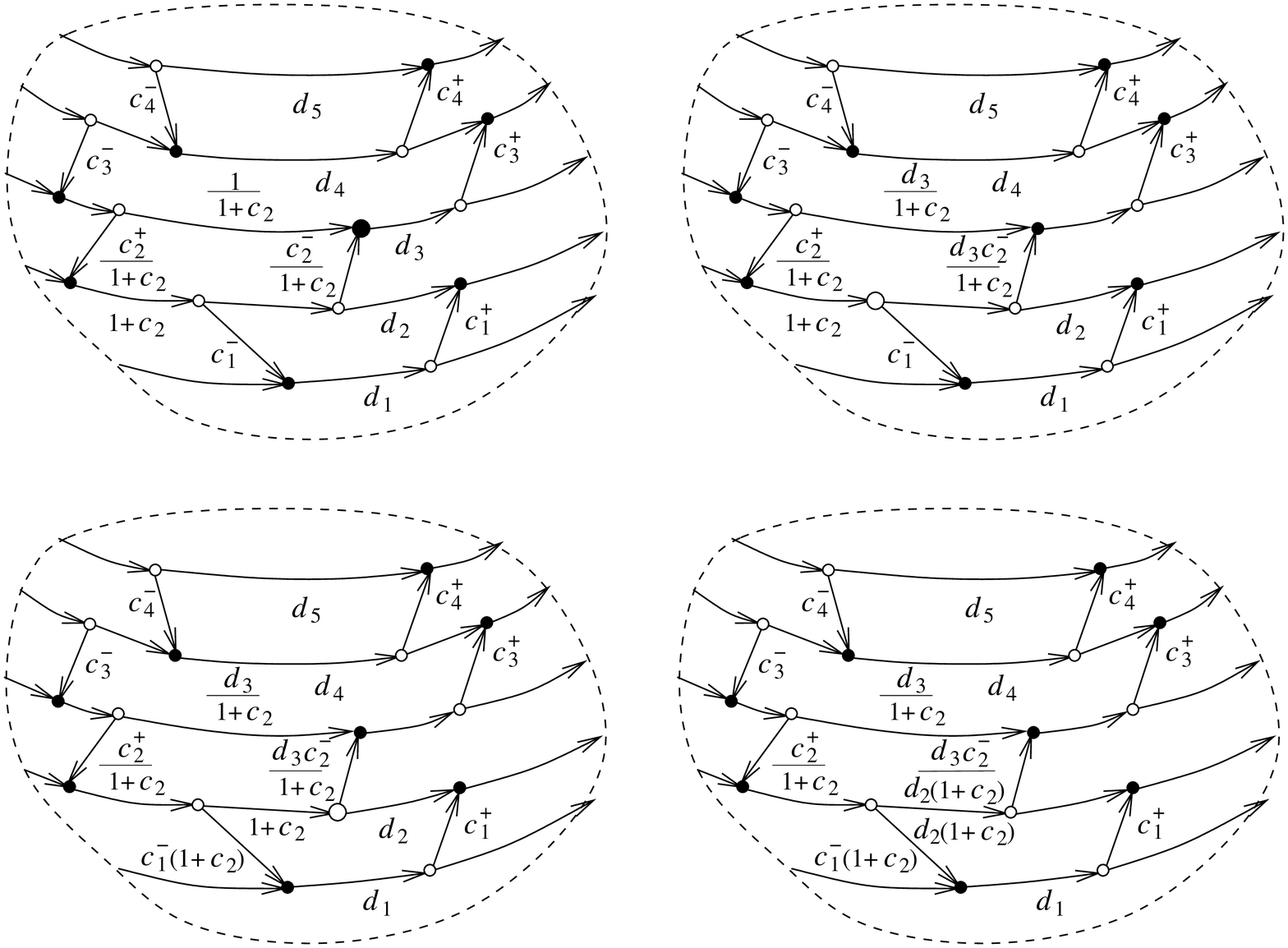}
\caption{Elementary generalized B\"acklund--Darboux transformation: steps (iii)-(vi)}
\label{gbd2}
\end{center}
\end{figure}
}
\end{examp}

We can make transformations $\sigma_{u,v}^{u',v'}$ more explicit by using Corollary~\ref{corshift}.
Below we write $A_{[i]}$ for the determinant of the leading principal $i\times i$ submatrix of a matrix $A$.
Pick an element $X=X(c,d) \in G^{u,v}$ defined by (\ref{factorI}) with factorization parameters
$d_i$ and $c_i^-= c_i$, $c_i^+ = 1$. 
 
\bp The map $\sigma_{u,v}^{u',v'} $ transforms coordinates $c_i, d_i$ on $ G^{u,v}/\HH$ into
coordinates $c'_i, d'_i$ on $ G^{u',v'}/\HH$ given by formulas
$$
d_i' = \frac{\left (X^{\delta\varkappa_i+1} \right )_{[i]} \left (X^{\delta\varkappa_{i-1} }\right )_{[i-1]} } 
 {\left (X^{\delta \varkappa_i} \right )_{[i]} \left (X^{\delta\varkappa_{i-1}+1} \right )_{[i-1]} }, $$
$$ c_i' = c_i d_i^2 \frac{ (X_{[i-1]})^{\varepsilon_i} }{ (X_{[i+1]})^{\varepsilon_{i+1}}}
 \frac{\left (X^{\delta\varkappa_{i-1}} \right )_{[i-1]} \left (X^{\delta \varkappa_{i+1} }\right )_{[i+1]} } 
 {\left (X^{\delta \varkappa_i+1} \right )_{[i]}^2 }
\left (  \frac{ \left (X^{\delta \varkappa_{i+1} +1}\right )_{[i+1]} } { \left (X^{\delta\varkappa_{i+1} }\right )_{[i+1]} } \right )^{\varepsilon_{i+1}}
\left (  \frac{ \left (X^{\delta \varkappa_{i-1} +1}\right )_{[i-1]} } { \left (X^{\delta\varkappa_{i-1} }\right )_{[i-1]} } \right )^{2-\varepsilon_{i}},
$$
where $\varepsilon$ and $\varkappa$ (resp. $\varepsilon'$ and $\varkappa'$) are $n$-tuples (\ref{epsum}), (\ref{kappa}) associated with $(u,v)$ (resp. $(u',v')$) and $\delta\varkappa_j=\varkappa'_j-\varkappa_j$ for $j\in [1,n]$.
\label{BDexact}
\ep

\begin{proof} The claim follows from formulas (\ref{cd}), (\ref{Gamma_l}), (\ref{Deltashift}) and an easy computation that shows that
$$
\frac{\Gamma_{i-1}\Gamma_{i+1}}{\Gamma_{i}^2} =  c_i d_i^2 \frac{ (d_1\cdots d_{i-1})^{\varepsilon_i} }{ (d_1\cdots d_{i+1})^{\varepsilon_{i+1}}}=
 c_i d_i^2 \frac{ (X_{[i-1]})^{\varepsilon_i} }{ (X_{[i+1]})^{\varepsilon_{i+1}}}.
$$
\end{proof}

\begin{examp}
{\rm Let $v=u^{-1}=u'=v'=s_{n-1}\cdots s_1$. Then $G^{u,v}/\HH$ is the set of Jacobi matrices (\ref{Jac}), which serves as the phase space for the finite nonperiodic Toda lattice, and $G^{u',v'}/\HH$ can be viewed as a phase space for the relativistic Toda lattice. Combining Theorem \ref{GBD}
with Example~\ref{runexCTL}, we obtain the following corollary of Proposition \ref{BDexact}.

\begin{coroll} If the entries $a_i, b_i$ of the Jacobi matrix $L$ evolve according to the equations
of the Toda lattice, then functions
$$
d'_i =  \frac{\left (L^{2-i} \right )_{[i]} \left (L^{2-i}\right )_{[i-1]} } 
 {\left (L^{1-i} \right )_{[i]} \left (L^{3-i} \right )_{[i-1]} },\qquad 
\tilde {c}'_i = a_i  \frac{\left (L^{-i} \right )_{[i+1]} \left (L^{3-i}\right )_{[i-1]} } 
 {\left (L^{1-i} \right )_{[i]} \left (L^{2-i} \right )_{[i]} }
$$
solve the relativistic Toda lattice.
\end{coroll}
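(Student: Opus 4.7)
The plan is to derive the corollary as a direct specialization of Proposition~\ref{BDexact} to the pair of Coxeter elements $(u,v)=(s_1\cdots s_{n-1},\, s_{n-1}\cdots s_1)$ on the source side (the tridiagonal/Jacobi case, for which $X=L$ and $a_i=c_id_i^2$, $b_i=d_i+c_{i-1}d_{i-1}$ by Example~\ref{runexCTL}(ii)) and $(u',v')=(s_{n-1}\cdots s_1,\, s_{n-1}\cdots s_1)$ on the target side (the relativistic case, for which $\tilde c'_i = c'_id'_i$ by Example~\ref{runexCTL}(iii)). Since Theorem~\ref{GBD} guarantees that $\sigma_{u,v}^{u',v'}$ intertwines the Coxeter--Toda flows associated with $F_1$, the substitution $d_i\mapsto d'_i,\ \tilde c_i\mapsto \tilde c'_i=c'_id'_i$ transports solutions of the Toda lattice to solutions of the relativistic one; only the explicit formulas remain to be verified.

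First I would read off the combinatorial data. On the source side $I^+=I^-=[1,n]$, so $\varepsilon_1=2$ and $\varepsilon_i=0$ for $i\ge 2$, which by \eqref{kappa} gives $\varkappa_1=0$ and $\varkappa_i=i-1$ for $i\ge 2$. On the target side $I'^+=[1,n]$, $I'^-=\{1,n\}$, so $\varepsilon'_1=2$, $\varepsilon'_i=1$ for $2\le i\le n-1$ and $\varepsilon'_n=0$, giving $\varkappa'_1=0$, $\varkappa'_i=0$ for $2\le i\le n-1$, $\varkappa'_n=1$. The shift vector $\delta\varkappa_i=\varkappa'_i-\varkappa_i$ is therefore $\delta\varkappa_1=0$, $\delta\varkappa_i=1-i$ for $2\le i\le n-1$, and $\delta\varkappa_n=2-n$.

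Next I would substitute these values into the two formulas of Proposition~\ref{BDexact}. The formula for $d'_i$ uses $\delta\varkappa_i+1=2-i$, $\delta\varkappa_i=1-i$, $\delta\varkappa_{i-1}=2-i$, $\delta\varkappa_{i-1}+1=3-i$ (valid uniformly once one adopts the conventions $(L^k)_{[0]}=1$ and $\varkappa_0=\varkappa'_0=0$), producing exactly
\[
d'_i=\frac{(L^{2-i})_{[i]}(L^{2-i})_{[i-1]}}{(L^{1-i})_{[i]}(L^{3-i})_{[i-1]}}.
\]
For $c'_i$, the exponents $\varepsilon_i=\varepsilon_{i+1}=0$ (in the range $i\ge 2$, $i+1\le n-1$) collapse the two $(X_{[i\pm 1]})^{\varepsilon}$ factors and one of the two parenthesised ratios to $1$, while the remaining factor raised to $2-\varepsilon_i=2$ gives $(L^{3-i})_{[i-1]}^2/(L^{2-i})_{[i-1]}^2$. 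Combining this with $\delta\varkappa_{i+1}=-i$ and $\delta\varkappa_i+1=2-i$, Proposition~\ref{BDexact} yields $c'_i=c_id_i^2\frac{(L^{-i})_{[i+1]}(L^{3-i})_{[i-1]}^2}{(L^{2-i})_{[i]}^2(L^{2-i})_{[i-1]}}$. Multiplying by the already-computed $d'_i$ and using $a_i=c_id_i^2$, the common factor $(L^{3-i})_{[i-1]}(L^{2-i})_{[i-1]}$ cancels and one factor of $(L^{2-i})_{[i]}$ cancels, leaving precisely
\[
\tilde c'_i=c'_id'_i=a_i\frac{(L^{-i})_{[i+1]}(L^{3-i})_{[i-1]}}{(L^{1-i})_{[i]}(L^{2-i})_{[i]}},
\]
as asserted.

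The main obstacle, and the only place where care is required, is the handling of the boundary indices $i=1$ and $i=n$, where the hypotheses of Proposition~\ref{BDexact} touch on undefined quantities such as $\varkappa_0$, $(L^k)_{[0]}$, $\varepsilon_0$, and the value of $\delta\varkappa_n=2-n$ (rather than $1-n$). One must verify, via the conventions $\DDelta_0^{(l)}=1$ used throughout Section~4 and the observation in Remark~\ref{detiden} that $(L^k)_{[n]}=(\det L)^k$ is a Casimir of the standard bracket, that the boundary instances of the corollary's formulas correctly reproduce the boundary transformations in the table following Theorem~\ref{GBD} (for instance, for $i=1$ the formula gives $d'_1=b_1$, matching row~5 with $c_0=0$). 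Once these boundary identities are confirmed, Theorem~\ref{GBD} finishes the proof, as $\sigma_{u,v}^{u',v'}$ carries the Hamiltonian flow of $F_1$ on the Jacobi side to that on the relativistic side.
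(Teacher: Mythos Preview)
Your proposal is correct and follows essentially the same route as the paper: both specialize Proposition~\ref{BDexact} to the source pair $(u,v)=(s_1\cdots s_{n-1},\,s_{n-1}\cdots s_1)$ and target pair $(u',v')=(s_{n-1}\cdots s_1,\,s_{n-1}\cdots s_1)$, use $a_i=c_id_i^2$, and invoke Theorem~\ref{GBD} (via Example~\ref{runexCTL}(iii)) to conclude that the transported variables solve the relativistic lattice. Your write-up is in fact more explicit than the paper's, which does not spell out the $\tilde c'_i=c'_id'_i$ cancellation or the boundary indices.

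One small correction: you cannot take $X=L$ literally. The matrix $X$ in Proposition~\ref{BDexact} is built with $c_i^+=1$, $c_i^-=c_i$, so its off-diagonal entries are $X_{i,i+1}=d_i$, $X_{i+1,i}=c_id_i$, whereas the Jacobi matrix \eqref{Jac} has $L_{i,i+1}=1$. The paper handles this by observing $L=DXD^{-1}$ for a diagonal $D$, whence $(L^k)_{[i]}=(X^k)_{[i]}$ and $a_i=c_id_i^2$; you should say the same. This does not affect your computation, since every quantity you use is a leading principal minor and hence invariant under diagonal conjugation.
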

\begin{proof} First, observe that an element $X$ featured in  Proposition \ref{BDexact} is a tridiagonal matrix whose nonzero off-diagonal entries are $ X_{i i+1} = d_i, X_{i+1 i} = c_i d_i $. The matrix $L$ associated with the same parameters $c_i, d_i$ is related to $X$ via $L = D X D^{-1}$ where
$D= \diag(1, d_1, \ldots, d_1\cdots d_{n-1})$. This means that $ (L^k)_{[i]} =  (X^k)_{[i]} $ for any $i,k$ and 
$a_i = L_{i+1 i} = c_i d^2_i $. Furthermore, $\varepsilon_i=0$ for $i\in [2,\ldots, n]$, $\varkappa_i = i-1$ and $\varkappa'_i = 0$ for $i\in [1,\ldots, n]$. The claim then
follows from  Example~\ref{runexCTL}(iii) and formulas of Proposition \ref{BDexact}.
\end{proof}

}
\end{examp}

\subsection{}
It is natural to ask if the classical Darboux transformation $ X= X_- X_0
X_+ \mapsto D(X) = X_0 X_+ X_- $ can  also be
interpreted in terms of the cluster algebra $\A$.  The transformation
$D$ constitutes a step in the {\em $LU$-algorithm}
for computing eigenvalues of a matrix $X$. A connection of the
$LU$-algorithm (as well as similar numerical algorithms, such as $QR$ and
Cholesky algorithms) to integrable systems of Toda type is well-documented,
see, e.g. \cite{dlt, w}. For an arbitrary semisimple Lie group, a restriction
of such a transformation to a Coxeter double Bruhat cell of type $G^{u,u}$ was
studied, under
the name of {\em factorization dynamics}, in \cite{Reshetikhin&Co}. We
collect some relevant simple facts about
the transformation $\D$ in the proposition below.

\bp
\label{darboux} 
Let $X\in \N_- \B_+$. Then

{\rm (i)} for any $i\in \mathbb{Z}$, $h_i(D(X))=h_{i+1}(X)/h_1(X)$;

{\rm (ii)} for any $u,v\in S_n$, if $X\in G^{u,v}$ then $D(X)\in
G^{u,v}$;

{\rm (iii)} $D$ descends to a rational Poisson map
$\D: G^{u,v}/\HH\to G^{u,v}/\HH$ that coincides with a time-one map of
the Hamiltonian flow
generated by the Hamiltonian $F(X)= \frac{1}{2} \Trace \log^2 X$.
\ep

\begin{proof} (i) For $i\ge 0$, we have
$ h_{i+1} (X)  =\left ( X_- (X_0 X_+ X_-)^{i} X_0 X_+ e_1, e_1 \right ) =
d_1 \left (D(X)^{i} e_1, e_1 \right )= h_1(X) h_{i} (D(X))$. 
The case $i < 0$ can be treated similarly.

(ii) It suffices to observe that if $Y_1 \in \N_-$ and $Y_2 \in \B_+$ than
both statements $Y_1 Y_2 \in G^{u,v}$ and $Y_2 Y_1 \in G^{u,v}$ are
equivalent to $Y_1 \in \B_+ v \B_+, Y_2 \in \B_- u \B_-$.

(iii) Claim (ii) implies that $D$ descends to a rational map from
$G^{u,v}/\HH$ to $G^{u,v}/\HH$. The rest of the claim is an immediate
corollary of general results in Section~7.1 in \cite{Reshetikhin&Co}.
\end{proof}

For a pair of Coxeter elements $(u,v)$, Proposition \ref{darboux}(i) allows us to
completely describe the action of $\D$ on $G^{u,v}/\HH$ in terms of a
simple map on $\RR_n$. Namely, define $\eta : \RR_n \to
\RR_n$ by $\eta(M(\lambda)) = 
\lambda M(\lambda) - H_0$.
Equivalently, $\eta$ can be described by
$\eta\left( \sum_{i=0}^\infty H_i\lambda^{-i-1} \right ) =
\sum_{i=0}^\infty H_{i+1}\lambda^{-i-1}$.
Then Proposition~\ref{darboux}(i) implies that on $G^{u,v}/\HH$ 
$$
\D =  \rho_{u,v} \circ \x_{u,v}\circ \eta \circ m_{u,v},
$$
where maps $\rho_{u,v}$, $\x_{u,v}$, $m_{u,v}$ were defined in the Introduction.

\br 
{\rm As we have seen in Section~\ref{difcon}, the shift $H_i \mapsto H_{i+1}$ plays an important role in the study of
$Q$-systems in \cite{dk}. }
\er

To tie together the cluster algebra $\A$ and the Darboux transformation $\D$, 
we have to descend to the cluster algebra $\A_1$ introduced in Section~\ref{difcon}. 
we will only need to fix the stable variable $x_{2n}$ to be equal to 1.
In view of (\ref{initclust}), this means that we are dealing with double
Bruhat cells in ${SL}_n$ rather than in $GL_n$. 
In order to emphasize a similarity between the classical Darboux transformation $\D$ and the generalized
B\"acklund--Darboux transformation $\sigma^{u',v'}_{u,v}$, we express the former similarly to~\eqref{gbdasclust}.

\bp 
$\D= \rho_{u,v} \circ T_{\D} \circ
\tau_{u,v}$, where $T_{\D} $
is a sequence of cluster transformations in $\A$. 
\ep

\begin{proof}
Note that in the graphical representation of the matrix
$B(\varepsilon)$ that we employed in the proof of Lemma \ref{specvar}, passing to the cluster
algebra $\A'$
amounts to erasing the white vertex and all corresponding edges in the
graph $\Gamma$. Consider the cluster corresponding to $\varepsilon=(2,0,\ldots,
0)$. By Lemma~\ref{truncat}(i),  the shift $H_i\mapsto H_{i+1}$ is achieved by
an application of $T_{2n-3} \circ \cdots \circ T_1$. 
This means that for $v=u^{-1}=s_{n-1}\cdots s_1$ we can choose
$T_{2n-3} \circ \cdots \circ T_1$ for $T_{\D}$. Then, for arbitrary pair of 
Coxeter elements $(u,v)$, $T_{\D} $ can be defined as
$$
T_{\D} = T^{u,v}_{w^{-1},w}\circ \left ( T_{2n-3} \circ
\cdots \circ T_1 \right ) \circ T_{u,v}^{w^{-1},w}
$$
with $w=s_{n-1}\cdots s_1$.
\end{proof}

\section*{Acknowledgments}

We wish to express gratitude to A.~Berenstein, P.~Di Francesco, R.~Kedem and A.~Zele\-vinsky for useful comments. 
A.~V.~would like to thank the University of Michigan, where he spent a sabbatical term in Spring 2009 and where this paper was finished. He is grateful to Sergey Fomin for warm hospitality and stimulating working conditions.
M.~S.~expresses his gratitude to the Stockholm University and the Royal Institute of Technology , 
where he worked on this manuscript in Fall 2008 during his sabbatical leave.
M.~G.~was supported in part by NSF Grant DMS \#0801204. 
M.~S.~was supported in part by NSF Grants DMS \#0800671 and PHY \#0555346.  
A.~V.~was supported in part by ISF Grant \#1032/08.

\bibliographystyle{unsrt}

\begin{thebibliography}{99}

\bibitem[1]{akh}   Akhiezer, N.~I.,
\textit{The classical moment problem and some related questions in analysis}. 
Hafner Publishing Co., New York 1965.

\bibitem [2] {BFZ}  Berenstein, A., Fomin, S., and Zelevinsky, A.,
Parametrizations of canonical bases and totally positive matrices. 
\textit{Adv. Math.}, 122 (1996), 49--149.

\bibitem [3] {CAIII}  \bysame   
Cluster algebras. III. Upper bounds and double Bruhat cells.
\textit{Duke Math. J.}, 126 (2005), 1--52.

\bibitem [4] {BK} Berenstein, A. and Kazhdan, D.,
\textit{Quantum Hankel algebras, clusters, and canonical bases},
manuscript.


\bibitem [5] {bf} Brockett, R.W. and Faybusovich, L., 
Toda flows, inverse spectral problems and realization theory.
\textit{Systems and Control Letters}, 16 (1991), 79-88.

\bibitem[6]{cmv} Cantero, M.~J., Moral, L., and Vel\'azquez, L.,
Five-diagonal matrices and zeros of orthogonal polynomials on the unit circle.
\textit{Linear Algebra Appl.}, 362 (2003), 29--56.

\bibitem[7]{dlnt}  Deift, P.~A.,  Li, L.-C.,  Nanda,  T., and  Tomei, C.,
The Toda lattice on a generic orbit is integrable.
\textit{Comm. Pure Appl. Math.}, 39 (1986), 183--232.

\bibitem[8]{dlt} Deift, P.~A.,  Li, L.-C.,  and  Tomei, C.,
Matrix factorizations and integrable systems.
\textit{Comm. Pure Appl. Math.}, 42 (1989), 443--521.

\bibitem[9]{dk} Di Francesco, P.  and  Kedem, R.,
Q-systems, heaps, paths and cluster positivity.
\textit{Comm. Math. Phys.}, 293 (2010), 727--802.

\bibitem[10]{DK2} \bysame  
Q-system cluster algebras, paths and total positivity.
\textit{SIGMA Symmetry Integrability. Geom. Methods Appl.}, 6 (2010), paper 014, 36pp.

\bibitem[11]{Fallat} Fallat, S., 
Bidiagonal factorizations of totally nonnegative matrices.
\textit{Amer. Math. Monthly}, 108 (2001), 697--712.

\bibitem[12] {FayGekh1} Faybusovich, L.  and   Gekhtman, M.,
Elementary Toda orbits and integrable lattices.
\textit{J. Math. Phys.}, 41 (2000), 2905--2921.

\bibitem[13] {FayGekh2} \bysame   
Poisson brackets on rational functions and multi-Hamiltonian structure for integrable lattices.
\textit{Phys. Lett. A}, 272 (2000), 236--244.

\bibitem[14] {FayGekh3} \bysame  
Inverse moment problem for elementary co-adjoint orbits. 
\textit{Inverse Problems},  17 (2001), 1295--1306.

\bibitem [15] {FZ1}  Fomin, S. and Zelevinsky, A., 
Double Bruhat cells and total positivity.
\textit{J. Amer. Math. Soc.}, 12 (1999), 335--380.

\bibitem [16] {FZ_Intel} \bysame  
Total Positivity: tests and parametrizations.
\textit{Math. Intelligencer}, 22 (2000), 23--33.

\bibitem [17] {FZ2} \bysame  
Cluster algebras.I. Foundations.
\textit{J. Amer. Math. Soc.}, 15 (2002), 497--529.

\bibitem [18] {CAII} \bysame  
Cluster algebras. II. Finite type classification.
\textit{Invent. Math.}, 154 (2003), 63--121.

\bibitem[19]{fuhr} Fuhrmann, P.~A.,
\textit{A polynomial approach to linear algebra.}  
Universitext. Springer-Verlag, New York, 1996.

\bibitem [20] {GSV1}  Gekhtman, M., Shapiro, M., and Vainshtein, A.,
Cluster algebras and Poisson geometry.  
\textit{Mosc. Math. J.}, 3 (2003), 899--934.


\bibitem[21]{GSV3} \bysame  
Poisson geometry of directed networks in a disk. 
\textit{Selecta Math.}, 15 (2009), 61--103.

\bibitem[22]{GSV4} \bysame  
\textit{Poisson geometry of directed networks in an annulus}. 
arXiv:0901.0020.


\bibitem[23]{Reshetikhin&Co}  Hoffmann, T., Kellendonk, J., Kutz, N., and Reshetikhin, N.,
Factorization dynamics and   Coxeter--Toda lattices. 
\textit{Comm. Mat. Phys.}, 212 (2000), 297--321.


\bibitem[24]{KarlinMacGregor}  Karlin, S.  and  McGregor, J.,
Coincidence probabilities.
\textit{Pacific J. Math.},  9 (1959), 1141--1164.

\bibitem[25]{kedem} Kedem, R.,
$Q$-systems as cluster algebras.
\textit{J. Phys. A}, 41 (2008), no. 19, 194011, 14 pp.


\bibitem[26]{KZ}  Kogan, M.  and Zelevinsky, A., 
On symplectic leaves and integrable systems in standard complex semisimple Poisson--Lie groups.
\textit{Internat. Math. Res. Notices}, 32 (2002), 1685--1702.

\bibitem[27]{Lindstrom}  Lindstr\"om, B.,
On the vector representations of induced matroids. 
\textit{Bull. London Math. Soc.}, 5 (1973), 85--90.


\bibitem[28]{moser}  Moser, J., 
Finitely many mass points on the line under the influence of the exponential potential - an integrable system. 
Dynamical systems, theory and applications, 467--497,
Lecture Notes in Physics,  vol.38, Springer, Berlin, 1975.

\bibitem[29]{Postnikov}  Postnikov, A., 
\textit{Total positivity, Grassmannians and networks}. arXiv: math/0609764.

\bibitem [30] {R}  Reshetikhin, N.,
Integrability of characteristic Hamiltonian systems on simple Lie groups with
standard Poisson Lie structure. 
\textit{Comm. Mat. Phys.},  242 (2003), 1--29.

\bibitem [31] {r-sts} Reyman, A.  and Semenov-Tian-Shansky, M.,
Group-theoretical methods in the theory of finite-dimensional integrable systems. 
Encyclopaedia of Mathematical Sciences, vol.16, Springer--Verlag, Berlin, 1994 pp. 116--225.

\bibitem[32]{simon} Simon, B.,
\textit{Orthogonal Polynomials on the Unit Circle, Part~1: Classical Theory}. 
AMS Colloquium Series, American Mathematical Society, Providence, RI, 2005.

\bibitem[33]{st}  Stieltjes, T.~J.,
Recherches sur les fractions continues. 
In: Oeuvres Compl\`etes de Thomas Jan Stieltjes, Vol. II, P. Noordhoff, Groningen, 1918, pp. 402--566.

\bibitem[34]{w}  Watkins, D.~S.,
Isospectral flows.
\textit{SIAM Rev.},  26 (1984), 379--391.


\bibitem[35]{Y} Yakimov,  M.,
Symplectic leaves of complex reductive Poisson--Lie groups.
\textit{Duke Math. J.}, 112 (2002), 453--509.

\bibitem[36]{YZ}  Yang, S.-W. and Zelevinsky, A., 
Cluster algebras of finite type via Coxeter elements and principal minors.
\textit{Transform. Groups},  13 (2008), 855--895.

\bibitem[37]{Z} Zelevinsky, A.,  
Connected components of real double Bruhat cells. 
\textit{Internat. Math. Res. Notices}, (2000), no. 21, 1131--1154.

\end{thebibliography}

\end{document}